\documentclass[12pt]{amsart}
\usepackage{amsrefs}
\usepackage{amssymb}
\usepackage{amsmath}
\usepackage{graphicx}
\usepackage{mathrsfs}
\usepackage{tikz-cd}
\usepackage{enumerate}

\usepackage{fullpage}

\usepackage{cleveref}

\title{V*-algebras}
\author{Andre Kornell}
\thanks{The research reported here was supported by National Science Foundation grant DMS-1066368.}

\newtheorem{theorem}{Theorem}[section]
\newtheorem{lemma}[theorem]{Lemma}
\newtheorem{proposition}[theorem]{Proposition}
\newtheorem{corollary}[theorem]{Corollary}

\newtheorem{principle}[theorem]{Principle}

\theoremstyle{definition}
\newtheorem{definition}[theorem]{Definition}

\theoremstyle{remark}
\newtheorem{remark}[theorem]{}

\newtheorem{example}[theorem]{Example}

\theoremstyle{plain}
\newtheorem*{theorem*}{Theorem}
\newtheorem*{lemma*}{Lemma}
\newtheorem*{proposition*}{Proposition}

\theoremstyle{definition}
\newtheorem*{definition*}{Definition}

\theoremstyle{remark}

\newcommand{\<}{\langle}
\renewcommand{\>}{\rangle}
\renewcommand{\:}{\colon}
\newcommand{\subsetof}{\subseteq}
\newcommand{\To}{\rightarrow}
\newcommand{\tensor}{\otimes}
\newcommand{\iso}{\cong}
\newcommand{\inv}{^{-1}}

\newcommand{\supp}{\mathop{\mathrm{supp}}}
\newcommand{\sa}{\vphantom{)}_{sa}}


\newcommand{\B}{\mathcal B}

\newcommand{\F}{\mathcal F}

\renewcommand{\H}{\mathcal H}

\newcommand{\K}{\mathcal K}
\renewcommand{\L}{\mathcal L}

\newcommand{\N}{\mathcal N}

\renewcommand{\S}{\mathcal S}

\newcommand{\X}{\mathcal X}
\newcommand{\Y}{\mathcal Y}

\newcommand{\CC}{\mathbb C}

\newcommand{\II}{\mathbb I}

\newcommand{\NN}{\mathbb N}

\newcommand{\QQ}{\mathbb Q}
\newcommand{\RR}{\mathbb R} 

\newcommand{\TT}{\mathbb T}

\newcommand{\ZZ}{\mathbb Z}

\newcommand{\CCC}{\mathbf C}
\newcommand{\DDD}{\mathbf D}

\newcommand{\LLL}{\mathbf L}
\newcommand{\MMM}{\mathbf M}

\newcommand{\VVV}{\mathbf V}

\newcommand{\Bb}{\mathscr B}

\newcommand{\mmm}{\mathbf m}
\newcommand{\nnn}{\mathbf n}

\newcommand{\ppp}{\mathbf p}
\newcommand{\qqq}{\mathbf q}
 
\newcommand{\sss}{\mathbf s}
\newcommand{\ttt}{\mathbf t}

\begin{document}

\maketitle

\begin{abstract}
What is the correct noncommutative generalization of the functor $C_0(X) \mapsto \ell^\infty(X)$ for locally compact Hausdorff $X$ having a countable basis? Making the ansatz $\mathcal K(\ell^2) \mapsto \mathcal B(\ell^2)$, we expect that every unital $*$-homomorphism $C(\mathbb T) \rightarrow \mathcal B(\ell^2)$ extend canonically to a unital $*$-homomorphism $\ell^\infty(\mathbb T) \rightarrow \mathcal B(\ell^2)$. Thus, we expect to extend the continuous functional calculus for a unitary operator on $\ell^2$ to all bounded complex-valued functions.

Therefore, we work in a model of set theory where every set of real numbers is Lebesgue measurable; we must assume the consistency of an inaccessible cardinal in order to do so. The axiom of choice necessarily fails in such a model, but our model is carefully chosen to enable the verification of many familiar theorems via a scrutinization of their statements rather than their proofs. This technique significantly lowers the cost of doing interesting mathematics in this unfamiliar setting, and it is explained in detail.
  
By analogy with the ultraweak topology, we define the continuum-weak topology on bounded operators to be the topology given by functionals of the form $x \mapsto \int_0^1 \langle \eta_t| x \xi_t\rangle \, dt$. We then define a V*-algebra to be a C*-algebra of bounded operators that is closed in the continuum-weak topology. Every C*-algebra $A$ has an enveloping V*-algebra $V^*(A)$, and if $X$ is a locally compact Hausdorff space with a countable basis, then $V^*(C_0(X)) \cong \ell^\infty(X)$. More generally, if $A$ is any separable C*-algebra of type I, then $V^*(A)$ is canonically isomorphic to an $\ell^\infty$-direct sum of type I factors, with one summand for each irreducible representation of $A$. The self-adjoint part of any unital separable C*-algebra is isomorphic to the Banach space of strongly affine real-valued functions on its state space.
\end{abstract}

\section{Introduction}

\subsection*{Motivation}Every commutative unital C*-algebra is canonically isomorphic to the C*-algebra of continuous complex-valued functions on some compact Hausdorff space $X$. Noncommutative mathematics treats noncommutative unital C*-algebras as though they are also of this form, albeit no noncommutative unital C*-algebra is of this form for any $X$ that is a compact Hausdorff space in the literal sense. Hence, one imagines ``quantum'' compact Hausdorff spaces. Likewise, one imagines ``quantum'' continuous functions corresponding to unital $*$-homomorphisms between unital C*-algebras.

There are many categories of ``quantum'' objects that generalize familiar categories in this way, and it is striking that there is no widely accepted such generalization of the category of sets and functions. The principal circumstance is that an obvious such generalization exists, but it is disappointing in many ways. This obvious generalization is the opposite of the category whose objects are von Neumann algebras generated by their minimal projections, and whose morphisms are unital ultraweakly continuous $*$-homomorphisms, or equivalently, the opposite of the category whose objects are C*-algebras of compact operators, and whose morphisms are C*-algebra morphisms in the sense of Woronowicz \cite{Woronowicz79}. Unfortunately, this line of reasoning eventually leads to a conflict of notions between the resulting ``quantum'' functions, and the ``quantum'' continuous functions of the preceding paragraph.

This paper is motivated by the problem of finding a quantum generalization of the construction $C(X) \mapsto \ell^\infty(X)$. Here, $C(X)$ denotes the C*-algebra of continuous complex-valued functions on a compact Hausdorff space $X$, and $\ell^\infty(X)$ denotes the von Neumann algebra of all bounded complex-valued functions on $X$. The von Neumann algebra $\ell^\infty(X)$ and the inclusion $*$-homomorphism $C(X) \hookrightarrow \ell^\infty(X)$ are defined up to isomorphism by the following universal property:
$$
\begin{tikzcd}
C(X) \arrow[hook]{r} \arrow{rd} & \ell^\infty(X)
\arrow[dotted]{d}{!}\\
& \ell^\infty(Y)
\end{tikzcd}
$$
every unital $*$-homomorphism $C(X) \To \ell^\infty(Y)$, for some set $Y$, factors uniquely through the inclusion via a unital ultraweakly continuous $*$-homomorphism. The problem is to find a subcategory $\DDD$ of C*-algebras such that every unital C*-algebra $A$ has an enveloping algebra in $\DDD$ in the sense of the universal property just described, and such that the enveloping algebra of $C(X)$ is still $\ell^\infty(X)$. The object in the opposite of $\DDD$ corresponding to the enveloping algebra of a unital C*-algebra $A$ could then be viewed as its quantum spectrum, and the opposite of $\DDD$ would behave like a category of quantum sets and functions in at least this respect.

The present approach is motivated by two research topics. The first is the classical work of Davies \cite{Davies68} and Pedersen \cite{Pedersen69} on the quantum generalization of the construction $C(X) \mapsto \Bb(X)$, where $\Bb(X)$ denotes the C*-algebra of bounded complex-valued Baire functions on the compact Hausdorff space $X$. Davies defines the $\sigma$-envelope $A^{\sim}$, which is the enveloping $\mathrm{\Sigma^*}$-algebra of a separable C*-algebra $A$. Similarly, Pedersen defines the enveloping Borel $*$-algebra $\Bb(A)$:
$$
\begin{tikzcd}
A \arrow[hook]{r} \arrow {rd} &\Bb(A)
\arrow[dotted]{d}{!} \\
& B
\end{tikzcd}
$$
Notably, $\Bb(A) = A^{\sim}$ when $A$ is type I \cite{Pedersen69}*{theorem 8}, but it is not known if this equality holds generally.

Our second motivating research topic is the recent approach of generalizing sets directly to von Neumann algebras. This idea appears implicitly in Kuperberg and Weaver's research on quantum metrics \cite{KuperbergWeaver11}, and in Weaver's consequent distillation of quantum relations \cite{Weaver11}. The author continued this research by relating Weaver's quantum relations to the category of von Neumann algebras and unital ultraweakly continuous $*$-homomorphisms \cite{Kornell11}, and by showing that the opposite of this category shares several key properties with that of sets and functions \cite{Kornell12}. Overall, the approach is remarkably successful at interpreting familiar operator theoretic results as statements about ``quantum collections'', i. e., the objects of this opposite category. Every unital C*-algebra $A$ has an enveloping von Neumann algebra $W^*(A) = A^{**}$:
$$
\begin{tikzcd}
A \arrow[hook]{r} \arrow{rd} & W^*(A) 
\arrow[dotted]{d}{!}\\
& M
\end{tikzcd}
$$
The enveloping von Neumann algebra generalizes the construction $C(X) \mapsto L^\infty(X)$, where $L^\infty(X)$ denotes the category-theoretic limit of measure algebras $L^\infty(X, m)$ over probability measures $m$ on the compact Hausdorff space $X$. While the enveloping Borel $*$-algebra $\Bb(C(X))$ consists of only the Baire bounded complex-valued functions on $X$, the enveloping von Neumann algebra $W^*(C(X))$ consists of \emph{more than} all the bounded complex-valued functions; $\ell^\infty(X)$ is typically a proper subalgebra of $W^*(C(X))$. This difficulty with the category of von Neumann algebras and unital ultraweakly continuous $*$-homomorphisms is quite general; recently, Heunen and Reyes have shown \cite{HeunenReyes14B} that any functor $F$, from the category of unital C*-algebras and unital $*$-homomorphisms, to the category of von Neumann algebras and unital ultraweakly continuous $*$-homomorphisms, equipped with a natural transformation from the identity functor that is isomorphic to $C(X) \hookrightarrow \ell^\infty(X)$ at each commutative unital C*-algebra $C(X)$, must take $\B(\ell^2(\NN))$ to $0$. 

The study of various notions of spectrum for noncommutative C*-algebras is of course almost as old as the study of the C*-algebras themselves \cite{Kaplansky51}. The study of \emph{noncommutative} spectra began somewhat later with noncommutative general topology, starting with Akemann's definition of open projections \cite{Akemann69}. Giles and Kummer's introduction of q-spaces \cite{GilesKummer71} is particularly notable for its emphasis on the noncommutative generalization of \emph{sets}. I include references to several other approaches that have the noncommutative viewpoint in mind \cite{Mulvey86} \cite{BorceuxRosickyVandenBossche89} \cite{IshamButterfield98} \cite{HeunenLandsmanSpitters09} \cite{HeunenReyes14A}.

\subsection*{Results}

We work in a model of set theory with the property that every subset of $\RR$ is Lebesgue measurable. By Vitali's theorem, the axiom of choice necessarily fails in such a model, but we do retain two limited choice principles, which are sufficient for a large fragment of functional analysis. We avoid the enormous task of establishing this claim by requiring that our model be transitive and closed under countable unions; this enables the verification of a familiar mathematical result via the scrutiny of its statement, rather than its proof. We expound this approach.

For every Hilbert space $\H$, we define the continuum-weak topology on the bounded operator algebra $\B(\H)$ to be the initial topology for functionals of the form $x \mapsto \int_0^1 \< \eta_t | x| \xi_t\> \,dt$, a variation on the usual definition of the ultraweak topology. We then define a V*-algebra to be a concrete C*-algebra $E \subsetof \B(\H)$ that is closed in the continuum-weak topology. The representation theory of V*-algebras is another variant of the usual theories for C*-algebras and von Neumann algebras.

Every $*$-homomorphism from a C*-algebra $A$ to a V*-algebra $E$ factors uniquely through the continuum-weak closure of $A$ in its universal representation, via a continuum-weakly continuous $*$-homomorphism:
$$
\begin{tikzcd}
A \arrow{r}{\gamma_\S} \arrow{rd}[swap]{\rho} & V^*(A) \arrow[dotted]{d}{!}[swap]{\pi} \\
& E 
\end{tikzcd}
$$
If $A$ is unital, then so is $V^*(A)$; if $\rho$ is unital, then so is $\pi$; thus, $V^*(A)$ is the enveloping V*-algebra of $A$ in the sense above. The reader is cautioned that the universal representation $\gamma_\S$ need not be faithful in general. However, if $A$ is separable, then the universal representation is always faithful, and if $A$ is furthermore unital, then the self-adjoint elements of $V^*(A)$ correspond bijectively with the strongly affine real-valued functions on the state space $\S(A)$ \cite{LudvikSpurny}*{section 2}.

In our model of set theory, a separable C*-algebra is type I iff $\hat A \preccurlyeq \RR$, in the sense of cardinality. This is essentially a reformulation of Mackey's conjecture, proved by Glimm \cite{Glimm61}, that $A$ is type I iff $\hat A$ is a standard Borel space. The GNS construction yields a surjection from the pure state space $\partial \S(A) \preccurlyeq \RR$, onto the spectrum $\hat A$, but without the axiom of choice, there may not be an injection in the other direction. This initially alarming circumstance allows the reformulation of classifiability results as statements about cardinality, arguably their most elegant and natural form.

If $A$ is a commutative separable C*-algebra, then it is isomorphic to $C_0(X)$ for some second countable locally compact Hausdorff space $X$, by Gelfand duality. Its enveloping V*-algebra is then isomorphic to $\ell^\infty(X)$. This result generalizes naturally to separable C*-algebras of type I: if $A$ is such a C*-algebra, then the inclusion $A \hookrightarrow V^*(A)$ is isomorphic to the direct sum of all type I factor representations $\gamma\: A \mapsto \gamma(A)''$, up to the natural notion of equivalence. This result is akin to the theorem of Davies \cite{Davies68}*{theorem 4.5} regarding the structure of the $\sigma$-envelope of a type I separable C*-algebra.

Thus, the category of unital V*-algebras and unital continuum-weakly continuous $*$-homomorphisms is a subcategory of the category of unital C*-algebras and unital $*$-homomorphisms, every unital C*-algebra has an enveloping unital V*-algebra, and $V^*(C(X)) = \ell^\infty(X)$ for every \emph{second countable} compact Hausdorff space $X$. The statement of the problem asks for such a subcategory, but it asks that the enveloping algebra of $C(X)$ be $\ell^\infty(X)$ for \emph{every} compact Hausdorff $X$. This is not the case for the subcategory of unital V*-algebras. If $X \succcurlyeq \RR$ is a discrete space, then $V^*(c_0(X)+ \CC)$, the enveloping V*-algebra of the C*-algebra of complex-valued functions on $X$ converging at infinity, is not isomorphic to $\ell^\infty(X)$.

\subsection*{Nonunital algebras}

For simplicity, our exposition has been framed in terms of unital C*-algebras, but nonunital C*-algebras play an important role in noncommutative mathematics. Assuming the axiom of choice, every commutative C*-algebra is isomorphic to $C_0(X)$, the C*-algebra of continuous complex-valued functions on some locally compact Hausdorff space $X$ that vanish at infinity, so noncommutative C*-algebras can be viewed as quantum locally compact Hausdorff spaces. The correct notion of morphism for nonunital C*-algebras, from the point of view of noncommutative geometry, is that of Woronowicz \cite{Woronowicz79}; we will refer to these morphisms as C*-morphisms. A C*-morphism from a C*-algebra $A$ to a C*-algebra $B$ is a $*$-homomorphism $\pi$ from $A$ to the centralizer C*-algebra $M(B)$ such that $\pi(A)B$ is norm dense in $B$. The opposite of the category of commutative C*-algebras and C*-morphisms is equivalent to the category of locally compact Hausdorff spaces and continuous functions.

The enveloping V*-algebra of a nonunital C*-algebra may or may not be unital. For example, $V^*(C_0(\RR)) = \ell^\infty(\RR)$ is unital, but if $X$ is $\RR$ in its discrete topology, then $V^*(C_0(X))$ consists of the countably supported bounded complex-valued functions on $X$. Thus, it is natural to define a morphism of $V^*$-algebras to be a continuum-weakly continuous $*$-homomorphism $\pi: E \To M(F)$ such that $\pi(E)F$ is continuum-weakly dense in $F$. Fortunately, the centralizer algebra $M(F)$ is always itself a V*-algebra. If the locally compact Hausdorff space $X$ is locally Polish, in the sense that every point has a neighborhood which is a Polish space in the subspace topology, then $M(C_0(X)) \iso \ell^\infty(X)$. In particular, $M(V^*(C_0(X))) \iso \ell^\infty(X)$ for every discrete space $X$.

We can reformulate the universal property of enveloping V*-algebras in terms of morphisms: Let $A$ be a C*-algebra and let $E$ be a V*-algebra. If $\rho: A \To M(E)$ is a $*$-homomorphism such that $\rho(A)E$ is continuum-weakly dense in $E$, then there is a unique continuum-weakly continuous $*$-homomorphism $\pi: V^*(A) \To M(E)$ such that $\pi(V^*(A))E$ is continuum-weakly dense in $E$, and $\pi \circ \gamma_\S = \rho$:
$$
\begin{tikzcd}
A \arrow{r}{\gamma_\S} 
\arrow{rd}[swap]{\rho}
&
V^*(A) \arrow[dotted]{d}{!}[swap]{\pi}
\\
&
M(E)
\end{tikzcd}
$$
Defining a V*-morphism from a V*-algebra $E$ to a V*-algebra $F$ to be a continuum-weakly continuous $*$-homomorphism $\pi$ from $E$ to $M(F)$ such that $\pi(E)F$ is continuum-weakly dense in $F$, we find that every C*-morphism from a C*-algebra $A$ to a C*-algebra $B$ extends to a V*-morphism from the enveloping V*-algebra $V^*(A)$ to the enveloping V*-algebra $V^*(B)$:
$$
\begin{tikzcd}
A \arrow{r} \arrow{d}{\gamma_{\S(A)}} & M(B) \arrow{d}{\tilde \gamma_{\S(B)}} \\
V^*(A) \arrow[dotted]{r}{!} & M(V^*(B))
\end{tikzcd}
$$
Here, $\gamma_{\S(A)}$ denotes the universal representation of $A$, and $\tilde \gamma_{\S(A)}$ is the unique extension of the universal representation of $B$ to the centralizer C*-algebra $M(B)$. Thus, we obtain a functor from the opposite of the category of C*-algebras and C*-morphisms, to the opposite of the category of V*-algebras and V*-morphisms, which might be viewed as a notion of quantum spectrum, but although $V^*(C_0(\RR)) \iso \ell^\infty(\RR)$, the V*-algebra $V^*(C_0(\RR))$ cannot be identified with the set $\RR$, because it is not isomorphic to $V^*(C_0(X))$ for $X$ a discrete space equinumerous to $\RR$. 

\subsection*{Quantum spectrum}

Another disadvantage of the enveloping V*-algebra as a notion of quantum spectrum is that there are unital C*-algebras $A$ such that $M(V^*(A)) = V^*(A)$ is not a von Neumann algebra. Our first example: the enveloping V*-algebra $V^*(C^*_r(F_2))$ of the reduced group C*-algebra of the free group on two generators is not a von Neumann algebra. Our second example: the enveloping V*-algebra $V^*(c_0(\RR) + \CC)$ of the C*-algebra of all complex-valued functions on $\RR$ converging at infinity is not a von Neumann algebra. In each case, the conclusion follows from the existence of two disjoint families of irreducible representations of $A$ whose direct integrals are equivalent. For the C*-algebra $C^*_r(F_2)$, this is a classical result of Yoshizawa \cite{Yoshizawa51}. For the C*-algebra $c_0(\RR) + \CC$, the irreducible representation whose kernel is $c_0(\RR)$ is equivalent to the direct integral of point evaluations for any atomless probability measure on $\RR$.

A variant of this second example illuminates the situation: The group of rational rotations acts ergodically on the circle $\TT = \RR/\ZZ$, which yields a unital continuum-weakly continuous $*$-homomorphism $\gamma\: \ell^\infty(\TT/\QQ) \To \CC$. If $X$ is $\TT/\QQ$ as a discrete space, and $\tilde X$ is its one-point compactification, then evaluation at infinity $C(\tilde X) \To \CC$ extends to a unital continuum-weakly continuous $*$-homomorphism $\ell^\infty(\tilde X) \To \CC$ in at least two distinct ways: evaluation at infinity, and via $\gamma$. Thus, we are reminded that only the unital ultraweakly continuous $*$-homomorphisms $\ell^\infty(X) \To \ell^\infty(Y)$ correspond to functions from $Y$ to $X$; the unital continuum-weakly continuous $*$-homomorphisms do not all correspond to functions.

A careful consideration of this circumstance suggests a somewhat coherent narrative for noncommutative general topology in its current form. Quantum sets correspond to von Neumann algebras whose center is of the form $\ell^\infty(X)$ for some set $X$, i. e., to direct sums of factors. Continuum-weakly continuous states correspond to quantum probability measures, and ultraweakly continuous states correspond to quantum atomic probability measures. Unital ultraweakly continuous $*$-homomorphisms correspond to quantum functions, and unital continuum-weakly continuous completely positive maps correspond to quantum probabilistic functions, which assign probability measures on the codomain to the imagined elements of the domain. Of these, the unital continuum-weakly continuous $*$-homomorphisms correspond to quantum probabilistic functions which are almost deterministic; a unital continuum-weakly continuous $*$-homomorphism $\ell^\infty(X) \To \CC$ corresponds to a probability measure that may not be evaluation at an element of $X$, but nevertheless, the variance of each observable on $X$ is intuitively infinitesimal, and literally zero.

If $A$ is a C*-algebra, let $\gamma_\F: A \To F(A)$ denote the direct sum of its factor representations, with $F(A) = \gamma_\F(A)''$. If $A$ admits a disintegration theory, in the sense that the universal representation $\gamma_\S: A \To W^*(A)$ factors through $\gamma_\F$ via some unital continuum-weakly continuous $*$-homomorphism, then every $*$-homomorphism $\pi$ from $A$ to some von Neumann algebra $M$, with $\pi(A)M$ ultraweakly dense in $M$, factors through $\gamma_\F$ via some continuum-weakly continuous $*$-homomorphism:
$$
\begin{tikzcd}
A \arrow{r}{\gamma_\F} \arrow{rd} &
F(A) \arrow[dotted]{d} \\
&
M
\end{tikzcd}
$$
In this case, $F(A)$ corresponds to the quantum spectrum of $A$. It follows that if $A$ and $B$ are C*-algebras that admit a disintegration theory, then each $*$-homomorphism $\pi\: A \To M(B)$, with $\pi(A)B$ norm dense in $B$, extends to unital continuum-weakly continuous $*$-homomorphism $F(A) \To F(B)$; thus, each such $*$-homomorphism is precomposition by a quantum probabilistic function from the quantum spectrum of $B$ to the quantum spectrum of $A$ that is almost deterministic.

Practical considerations compel me to make this draft available before I've had time to carefully consider this narrative, and before I've had time to add the proofs of a few relevant facts that seem more or less straightforward. I believe that every separable C*-algebra has a disintegration theory in the sense above. However, not every C*-algebra admits a disintegration theory, e. g., $L^\infty(\RR, \lambda)$. I don't know whether every C*-algebra of the form $C_0(X)$, for some locally compact Hausdorff space $X$, admits a disintegration theory. However, I believe that if $X$ is also locally Polish, in the sense that every point of $X$ has a neighborhood that is a Polish space in its subspace topology, then $C_0(X)$ does admit a disintegration theory. I believe that a locally compact Hausdorff space $X$ is locally Polish iff for every GNS representation $\gamma_\mu: C(X) \To \B(\H_\mu)$ there exists a separable ideal $I$ such that $\gamma_\mu(C(X)) = \gamma_\mu(I)$, and that any noncommutative C*-algebra with this property necessarily admits a disintegration theory. Thus, we might speak of quantum locally compact locally Polish spaces.

In this picture of noncommutative topology, only the liminary C*-algebras with a disintegration theory consist of functions. If $A$ admits a disintegration theory, but it is not liminary, then there is a self-adjoint element $a \in A$ of norm $1$ such that the functional calculus $C([0,1]) \To F(A)$ has no ultraweakly continuous extension $\ell^\infty([0,1]) \To F(A)$; thus $a$ does not correspond to a real-valued quantum function, only to an almost deterministic real-valued quantum probabilistic function.

It is tempting to replace direct sums of factors with direct sums of type I factors in the above narrative, because the latter are precisely the von Neumann algebras that are generated by their minimal projections, which are the ``obvious'' candidates for the quantum generalization of sets. And indeed, every cyclic representation of a separable C*-algebra disintegrates into irreducible representations. However, the axiom of choice fails in our model of set theory, and it seems probable that the universal representation of a separable C*-algebra need not disintegrate into irreducible representations.

\subsection*{Acknowledgements}

I am very grateful to my advisor, Marc Rieffel, for his support and encouragement, and for numerous valuable comments on earlier drafts of this paper. I am also indebted to Andrew Marks, John Steel, and Hugh Woodin, who have patiently answered my many questions regarding set theory. The approach taken in the present paper relies crucially on their input; it also incorporates several of their suggestions. I thank Leo Harrington for catching my omission of the word ``separable'' in item 123 of section 7. I thank Vaughn Jones for giving me the opportunity to present some of this research to the vibrant operator theory group at Vanderbilt. I also thank Ilijas Farah for giving me the opportunity to contribute a talk on this research to the East Coast Operator Algebras Symposium. Finally, I thank Arnaud Brothier, Lawrence Brown, Alexandru Chirvasitu, Michael Hartglass, Benjamin Hayes, Sridhar Ramesh, Ji\v{r}\'{i} Spurn\'{y}, and Noah Schweber, as well as everyone I mention above, for their comments and answers.

\subsection*{Notation and terminology}

The symbol $\II$ denotes the unit interval. The symbol $\TT$ denotes either the unit circle in the complex plane, or the quotient $\RR/\ZZ$, as appropriate to the context. A topological space is Polish iff it is homeomorphic to a complete separable metric space.

Let $f\: T \To S$ be a function. The expression $(f(t) \in S \: t\in T)$ also denotes the function $f$; the expression $\{f(t)\in S\: t \in T\}$ denotes its range; and the expression $[f(t) \in S \: t \in T]$ denotes its equivalence class under the relevant equivalence relation. Where there is no danger of ambiguity, we will sometimes denote these objects simply by $(f(t))$, $\{f(t)\}$, and $[f(t)]$.

Let $X$ be a locally compact Hausdorff space. The support of a function $f\: X \To \CC$ is the set $\mathrm{supp}(f) =\{x \in X\: f(x) \neq 0\}$, and $f$ is compactly supported iff the closure $\overline{\mathrm{supp}(f)}$ is compact. A function $f \: X \To \CC$ is in $\ell^\infty(X)$ iff it is bounded; it is in $\ell^2(X)$ iff it is square-summable; it is in $\ell^1(X)$ iff it is absolutely summable; it is in $c_0(X)$ iff $\{|f(x)|\geq \epsilon\}$ is finite for all $\epsilon >0$; it is in $C_0(X)$ iff it is continuous and $\{|f(x)|\geq \epsilon\}$ is compact for all $\epsilon >0$; it is in $C_c(X)$ iff it is continuous and compactly supported; and it is in $\Bb(X)$ iff its real and imaginary parts can be obtained from the continuous compactly supported real-valued functions on $X$ by taking limits of ascending and descending sequences. A subset $S \subsetof X$ is Baire iff the function $f\: X \To \{0,1\}$ such that $f\inv(1)=S$ is in $\Bb(X)$ \cite{Pedersen88}*{remark 6.2.10}.

Let $T$ be a set. A complex-valued measure on $T$ is a function $m \: \{S \subsetof T\} \To \CC$ that is countably additive in the sense that the series $\sum_n m(S_n)$ converges absolutely to $m(\bigcup_n S_n)$ for all countable families $(S_n\subsetof T)$ of pairwise disjoint subsets. A finite measure is a complex-valued measure $m$ such that $m(S) \geq 0$ for all $S \subsetof T$; it is a probability measure if furthermore $m(T) = 1$. If $T$ is equipped with a finite measure $m$, and $\X$ is a Banach space, then $\L^1( T, \X) = \{f\: T \To \X \: \int_{t \in T} \|f(t)\| \, dm < \infty\}$ is the set of integrable functions, $\L^2(T, \X)= \{f \: T \To \X\: \int_{t \in T} \|f(t)\|^2 \, dm < \infty \}$ is the set of square-integrable functions, and $\L^\infty(T, \X) = \{f\: T \To \X\: \inf_{m(S) = m(T)} \sup_{t \in S} \|f(t)\| < \infty  \}$ is the set of essentially bounded functions; and $L^1(T, \X)$, $L^2(T, \X)$, and $L^\infty(T, \X)$ are the corresponding Lebesgue spaces. The unit interval $\II$ will often be implicitly equipped with Lebesgue measure.

Let $A$ be a C*-algebra. A functional on $A$ is a bounded linear function $A \To \CC$. If $\mu$ is a state on $A$, then $\gamma_\mu\: A \To \B(\H_\mu)$ denotes the GNS representation of $A$ for the state $\mu$. If $T$ is a set of states, then $\gamma_T$ denotes the direct sum representation $a \mapsto \bigoplus_{\mu \in T}\gamma_\mu(a)$. The space of all states of $A$ is denoted $\S(A)$, and $\gamma_{\S(A)}$ is often abbreviated $\gamma_\S$; this is the universal representation. The space of all \emph{pure} states is denoted $\partial \S(A)$, and $\gamma_{\partial \S(A)}$ is often abbreviate $\gamma_\partial$; this is the atomic representation. Also, $W^*(A)$ is the closure of $\gamma_\S(A)$ in the ultraweak topology, and $V^*(A)$ is the closure of $\gamma_\S(A)$ in the continuum-weak topology, which is to be defined.

Let $A$ be a concrete C*-algebra on some Hilbert space $\H$. Then, $A$ is nondegenerate iff $A\H$ is norm dense in $\H$. A vector functional on $A$ is a functional of the form $ a \mapsto \< \xi| a \eta\>$ for some $\xi, \eta \in \H$. A vector state on $A$ is a state that is a vector functional, or equivalently, a state of the form $a \mapsto \< \xi| a \xi \>$ for some $\xi \in \H$. The Banach space of all ultraweakly continuous functionals is denoted $A_*$; the subspace of all ultraweakly continuous states is denoted $\S_*(A)$, and $\gamma_{\S_*(A)}$ is often abbreviated $\gamma_*$. The Banach space of all continuum-weakly continuous functionals is denoted $A_\bullet$; the subspace of all continuum-weakly continuous states is denoted $\S_\bullet(A)$, and $\gamma_{\S_\bullet(A)}$ is often abbreviated $\gamma_\bullet$. The expression $\overline A$ always denotes the closure of $A$ in the continuum-weak topology.

A V*-algebra is a concrete C*-algebra that is closed in the continuum-weak topology, which is to be defined. The term is intended to suggest an analogy with W*-algebras, i. e., von Neumann algebras. The term ``V*-algebra'' was previously used for a different class of operator algebras \cite{Berkson66}; our appropriation of this term might be justified by the fact that this class of operator algebras coincides with that of C*-algebras \cite{Berkson66}*{theorem 4.3}. The term ``v*-algebra'' is also used in universal algebra \cite{Urbanik63}.

A diagram with a dotted arrow expresses that there is a morphism which makes that diagram commute. A diagram with an exclamation mark expresses that the morphism making that diagram commute is unique. A diagram with neither of these features simply expresses that that diagram commutes.

Pedersen's \emph{C*-algebras and their Automorphism Groups} is our basic reference \cite{Pedersen79} \cite{Pedersen88}. I also recommend Blackadar's \emph{Operator Algebras} \cite{Blackadar06}, Jech's \emph{Set Theory} \cite{Jech02}, and Schechter's \emph{Handbook of Analysis and its Foundations} \cite{Schechter97} as encyclopedic references for their respective subjects.

We implicitly assume the existence of a proper class of Woodin cardinals that are limits of Woodin cardinals. All assertions in \cref{part2} and subsequent sections are implicitly stated \emph{in the Chang model} $\CCC$, i. e., $\CCC^{\omega_1}$ \cite{Chang71}.

\section{The role of set theory}

Axiomatic set theory is the established foundational system for mathematics. There are foundational systems that are more category-theoretic in nature, but we are not interested in formal foundations in the present paper. We are interested in arguing that the assumption that every set of real numbers is Lebesgue measurable is consistent with a large fragment of modern mathematics. Thus, we discuss set theory because Zermelo-Fraenkel set theory, its extensions, and its fragments, are the objects of modern research into consistency.

The role of set theory as a foundational system for mathematics may be explained with a simile. Computers handle finite mathematical structures by storing their data as strings of bits. Conceptually, a finite graph is not literally a string of bits, but an ideal computer can search through finite graphs by searching through strings of the appropriate kind. Similarly, the set $\{\{\}, \{\{\}\}\}$ is not literally the number $2$, but it is a set that is commonly used to represent the number $2$. Thus, a set should be thought of as a block of information that can be used to represent a mathematical structure.

Set theory typically considers only hereditary sets, which are defined recursively as sets whose elements are all hereditary sets, starting with the empty set. The ``universe'' of all hereditary sets, denoted $\VVV$, is assumed to satisfy the Zermelo-Fraenkel axioms of set theory, denoted $\mathbf{ZF}$. These axioms formalize our intuition of sets as classes of limited size. The universe $\mathbf V$ is also assumed to satisfy the axiom of choice, denoted $\mathbf{AC}$. The theory $\mathbf{ZF +AC}$ is the usual foundation for mathematics, and mathematical objects are assumed to be represented by members of $\mathbf V$.

The \emph{large cardinal axioms} are additional set theoretic axioms which express the existence of large sets. They generalize the axiom of infinity, an axiom of $\mathbf{ZF}$, which expresses the existence of an infinite set. The large cardinal axioms expand the range of mathematical situations that can be provably modeled in $\VVV$. These axioms are not provably consistent with $\mathbf{ZF +AC}$, just as some of the axioms of $\mathbf{ZF +AC}$ are not provably consistent with weaker axiom systems that are nevertheless sufficient to formalize the vast majority of research mathematics.

\section{The interpretation of mathematics in set theory}

The details of the interpretation of mathematics in set theory are not important to us, but the general features of this interpretation are useful to have in mind. One should think of hereditary sets, the members of $\mathbf V$, as blocks of raw information that are being used to store the data of mathematical structures. We assume that every mathematical structure is encoded by some hereditary set.

The theory $\mathbf{ZF} + \mathbf{AC}$ is a foundation for mathematics in the sense that various properties of mathematical objects, and their tuples, can be expressed as properties of the hereditary sets that encode them, and mathematical reasoning can be expressed as deductions about these properties from the axioms of $\mathbf{ZF} + \mathbf{AC}$, or just from the axioms of $\mathbf{ZF}$ if no appeal to the axiom of choice is made. This is analogous to programming. The properties of hereditary sets are typically expressed in the \emph{language of set theory}. Formally, the language of set theory is the first-order language on a single binary predicate, written $\in$. The properties expressed by this language are those that can be written down using the symbols $\neg$, $\vee$, $\wedge$, $\forall$, $\exists$, $=$, $\in$, and a stock of variables. Sensible strings composed of these symbols are typically called formulas.

There is essentially one standard interpretation of mathematics in set theory: a standard way of associating hereditary sets to mathematical objects, a standard way of associating formulas of the language of set theory to mathematical properties, and a standard way of associating deductions of the theory $\mathbf{ZF+AC}$ to mathematical proofs. The interpretations that appear in various texts do differ, most prominently in the coding of the real numbers, but they are interchangeable for our purposes. For concreteness, we mention the interpretation provided  in the classic series of Bourbaki \cite{Bourbaki04}. We use the phrase ``the standard interpretation'' to describe a slightly modified version of this interpretation, which is conceptually cleaner. Specifically, in Bourbaki's interpretation, a hereditary set can code more than one object, e. g., the empty set codes both itself and the natural number zero, but it is simple to modify their interpretation to eliminate this phenomenon. 

The key point is that under the standard interpretation, the properties of the set theoretic universe $\VVV$ correspond to mathematical truths. A group is amenable iff the hereditary set that codes it has some property expressible in the language of set theory. This property is the existence of another hereditary set, a block of information that codes an invariant mean on the given group. When we restrict our attention exclusively to mathematical objects that are coded by hereditary sets that are constructible in a certain sense, the properties of such an object may be altered, as a result of excluding blocks of data describing possible ways to manipulate this object.

\section{The Chang model}

Intuitively, an inner model of set theory is a ``subuniverse'' of $\mathbf V$. Thus, an inner model $\mathbf M$ is a class of hereditary sets such that the Zermelo-Fraenkel axioms are true in $\MMM$, i. e., true if all variables are restricted to $\MMM$. Additionally, an inner model must be \emph{transitive}, and \emph{almost universal}. Transitivity is the condition that any element of a set in $\mathbf M$ must itself be in $\mathbf M$; thus, a member of $\mathbf M$ has exactly the same elements in $\mathbf M$ as it has in $\mathbf V$. Almost universality is the condition that any set of members of $\mathbf M$ is a subset of a member of $\mathbf M$.

The Chang model $\CCC$ is the smallest inner model of set theory that is closed under countable unions. This model first appeared in Chang's paper \cite{Chang71}; its existence is nontrivial. The Chang model $\CCC$ consists of all sets that are constructible in a certain sense, which is a modified version of the usual notion of set theoretic constructibility in the sense of G\"odel's constructible universe $\LLL$.

We assume that the set theoretic universe $\VVV$ satisfies the following large cardinal axiom: there exists a proper class of Woodin cardinals that are limits of Woodin cardinals. \textbf{Hugh Woodin} has shown that this large cardinal axiom is sufficient to establish that the Chang model $\CCC$ satisfies the axioms $\mathbf{DC}$, $\mathbf{AC_{ae}}$, and $\mathbf{AD^+}$ \cite{Woodin14} \cite{Larson}*{p. 44}. It follows that the Chang model does not satisfy $\mathbf{AC}$, so the Chang model $\CCC$ is strictly smaller than the set theoretic universe $\VVV$, and strictly larger than G\"odel's constructible universe $\LLL$.

The axiom of dependent choices, $\mathbf{DC}$, is a fragment of the axiom of choice. It says that it is possible to make a countable sequence of choices, where at each step one chooses from a set that depends on the choices made at previous steps. Formally, the axiom $\mathbf{DC}$ says that for any directed graph, if every vertex is the source of some arrow, then there exists a path through that directed graph from any given vertex that is either infinite or returns that vertex. It is equivalent to the Baire category theorem for complete metric spaces.

The axiom of choice almost everywhere, $\mathbf{AC_{ae}}$, is another fragment of the axiom of choice. It says that for any continuum of disjoint sets, it is possible to select an element from almost all of them. Formally, the axiom $\mathbf{AC_{ae}}$ says that for every family of sets $\{C_t\: t \in \II\}$, there exist a Lebesgue-measurable set $X \subsetof \II$ of full measure, and a function $f\: X \To \bigcup C_t$ such that $f(t) \in C_t$ for all $t \in X$.

The axiom $\mathbf{AD^+}$ is a technical strengthening of the axiom of determinacy, $\mathbf{AD}$. The axiom of determinacy says that for any game of perfect information, which consists of a countable sequence of moves made by two players from among countably many options, one of the two players has a winning strategy. It is sometimes rendered metaphorically as $\forall A \subsetof \NN^\NN\:$
\begin{align*}
\neg \exists n_0 \: \forall n_1\: & \exists n_2 \: \forall n_3\: \cdots (n_0, n_1, n_2, n_3, \ldots) \in A  \\ & \Longleftrightarrow 
 \forall n_0 \: \exists n_1\: \forall n_2 \: \exists n_3\: \cdots \neg (n_0, n_1, n_2, n_3, \ldots) \in A   
\end{align*}
We will not state the axiom of determinacy precisely; instead we list some of its known consequences under $\mathbf{ZF + DC}$:
\begin{enumerate}
\item $\mathbf{LM}$: Every subset of $\II$ is Lebesgue measurable.
\item $\mathbf{BP}$: Every subset of $\II$ has the Baire property.
\item $\mathbf{PSP}$: Every subset of $\II$ is countable or contains a perfect subset.
\item All ultrafilters are countably additive. All ultrafilters on $\NN$ are principal.
\item Every linear function between Fr\'echet spaces is continuous \cite{Garnir74}. Every linear function between Banach spaces is bounded.
\item For every Banach space, its algebraic dual is equal to its continuous dual.
For all $\sigma$-finite measure spaces $S$, $L^\infty(S)^* = L^1(S)$ \cite{Vath98}, and in particular $\ell^\infty(\NN)^* = \ell^1(\NN)$ \cite{Pincus74} .
\end{enumerate}

\subsection*{Cardinality}
Recall that for two sets $X$ and $Y$, we write $X \preccurlyeq Y$ in case there is an injection from $X$ into $Y$; this defines a preorder on the class of all sets. If $X \preccurlyeq Y$ and $Y \preccurlyeq X$, then $X \approx Y$, i. e., there exists a bijection between $X$ and $Y$; the proof of the Schr\"oder-Bernstein theorem does not use the axiom of choice. However, the axiom of choice is necessary to show that any two sets are comparable, and in the Chang model, neither $\omega_1 \preccurlyeq \RR$ nor $\RR \preccurlyeq \omega_1$.

Without the axiom of choice, the existence of a surjection of $X$ onto $Y$ does not necessarily imply the existence of an injection of $Y$ into $X$. In the Chang model, it is possible for a quotient of a set $X$ to be \emph{strictly larger} than $X$. For example, in the Chang model, there is no injection of $\TT/\QQ$ into $\TT$, because the latter has no nonprincipal ultrafilters, but the pushforward of Lebesgue measure along the quotient map $\TT \To \TT/\QQ$ yields such an ultrafilter on $\TT/\QQ$. On the other hand, for any enumeration $(q_n)$ of $\TT \cap \QQ$, the function $t \mapsto [{ \sum_{q_n \in [0,t]} \frac{1}{n!}}]$ is an injection of $\TT$ into $\TT/\QQ$.

\section{Consistency}

The arguments of the present paper can be carried out in any transitive model (a set or a proper class) of $\mathbf{ZF} + \mathbf{DC} + \mathbf{AC_{ae}} + \mathbf{LM} + \mathbf{BP} + \mathbf{PSP}$ that is closed under countable unions. The existence of such a transitive model is equiconsistent with the existence of an inaccessible cardinal, which is a very weak large cardinal axiom. \textbf{John Steel} pointed out to me that Solovay's model is such a transitive model in the relevant forcing extension, whenever the ground model satisfies the axiom of constructibilty, which may be trivially arranged \cite{Solovay70} \cite{KanoveiLambalgen08} \cite{Shelah84}. Thus, all of the results of the present paper that are stated in the Chang model also hold in this Solovay model.

An inaccessible cardinal is an infinite cardinal number $\kappa$ with the property that the set of smaller cardinals is closed under cardinal exponentiation, and under indexed sums, that is, under the operation $ \phi \mapsto \sum_{\alpha \in \lambda} \phi(\alpha)$, for $\phi = (\phi(\alpha) < \kappa \: \alpha \in \lambda)$ a family of cardinal numbers indexed by a cardinal number $\lambda < \kappa$. An inaccessible cardinal is also usually required to be uncountable to exclude the cardinal $\kappa = \aleph_0$, whose existence is one of the Zermelo-Fraenkel axioms $\mathbf{ZF}$; thus, one might justify the existence of inaccessible cardinals in roughly the same way that one justifies the existence of the set $\NN$ of natural numbers.

It is impossible to prove the existence of an inaccessible cardinal. If $\kappa$ is an inaccessible cardinal, then the set $V_\kappa$ of hereditary sets of rank  less than $\kappa$ is a model of $\mathbf{ZF}$, so the existence of an inaccessible cardinal implies the consistency of $\mathbf{ZF}$; thus, by G\"odel's incompleteness theorem we cannot establish the existence of such a cardinal from $\mathbf{ZF}$. A tiny variation on this argument shows that it is likewise impossible to establish the consistency of the existence of an inaccessible cardinal. However, the inconsistency of this large cardinal axiom would imply that a nonmeasurable subset of $\RR$ can be obtained without using the axiom of choice \cite{Hamkins11}. Furthermore, the existence of inaccessible cardinals is sometimes assumed in other branches of mathematics, for example, in the guise of Grothendieck universes.

The author's view is that we should accept any axiom that is convenient and that has been vetted by the set theoretic community, just as we do in the case of the theory $\mathbf{ZF +AC}$, i. e., $\mathbf{ZFC}$, the Zermelo-Fraenkel axioms with choice. Most of modern mathematics can be formalized in much weaker axiomatic systems such as Zermelo set theory with choice, or even third order arithmetic \cite{Weaver09}, so we do not accept $\mathbf{ZFC}$ out of necessity. Nevertheless, it is possible that these axioms are inconsistent. The very history of $\mathbf{ZFC}$ shows that its axioms are not self-evident. The notion that our century of working under $\mathbf{ZFC}$ is evidence for its consistency is particularly pernicious because most of that work relies on a tiny fraction of its power; this century of mathematical development is no more evidence for the consistency of $\mathbf{ZFC}$, than a century of elementary school mathematics is evidence for the consistency of Peano arithmetic. Our best evidence for the consistency of $\mathbf{ZFC}$ is set theoretical research, which consciously probes the boundaries of this axiomatic system, rather than consciously avoiding them. The overwhelming consensus among working set theorists is that the existence of an inaccessible cardinal is consistent with $\mathbf{ZFC}$.

Although the arguments of the present paper can be carried out in any transitive model of $\mathbf{ ZF + DC + AC_{ae} + LM + BP + PSP}$ that is closed under countable unions, the existence of which is equiconsistent with the existence of an inaccessible cardinal, we formally work specifically in the Chang model, and we assume the existence of a proper class of Woodin cardinals that are limits of Woodin cardinals, which is a much stronger large cardinal assumption. Before giving our reasons for doing so, we mention that it is possible to argue for the consistency of this assumption along the above lines. 

Unlike the model produced by Solovay's construction, the Chang model is canonical, and its definition is easy to intuit. The transitive model produced by Solovay's construction is an inner model of a forcing extension, whose definition is technical from the perspective of a functional analyst, and depends on the choice of a generic filter. On the other hand, the Chang model is the smallest inner model that is closed under countable unions, and it is the class of all constructible sets for language $L_{\omega_1 \omega_1}$, which is a variant of the language of set theory that allows countably infinite conjunctions and disjunctions such as $\phi_0 \vee \phi_1 \vee \ldots$, and countably infinite sequences of existential or universal quantifiers such as $\forall x_0\: \forall x_1\:\ldots$.

Furthermore, the Chang model is an inner model of the set theoretic universe $\VVV$. From the perspective of mathematical realism, this implies that any absolute proposition that we establish in the Chang model is \emph{true}, rather than merely consistent. Formally, such a proposition is provable from the existence of a proper class of Woodin cardinals that are limits of Woodin cardinals.

Finally, if there is a proper class of Woodin cardinals that are limits of Woodin cardinals, then the Chang model satisfies $\mathbf{AD^+}$ \cite{Larson}*{p. 44}. Although we do not use this axiom in the present paper, it is quite relevant to our approach. This axiom implies that the bounded real-valued functions on a complete separable metric space $X$, can be obtained from the continuous bounded real-valued functions on $X$ by iterating taking limits of monotone well-ordered sequences. By analogy with Pedersen's enveloping Borel $*$-algebra $\Bb(A)$ \cite{Pedersen72}, we can define the enveloping $\infty$-Borel $*$-algebra $\Bb_\infty(A)$ of a C*-algebra $A$. In general, $\Bb_\infty(A) \subsetof V^*(A)$, but it is possible to show that $\Bb_\infty(A) = V^*(A)$ whenever $A$ is type I. The reader is cautioned that the axioms $\mathbf{BP}$ and $\mathbf{DC}$ imply that the meager subsets of any Polish space are closed under well-ordered unions \cite{Kechris73}*{proof of proposition 1.5.1}, so the bounded complex-valued functions on $\TT/\QQ$ with meager support form an $\infty$-Borel $*$-algebra that is not a V*-algebra; it is an ideal of $\ell^\infty(\TT/\QQ)$ of codimension $1$, and the corresponding unital $*$-homomorphism $\ell^\infty(\TT/\QQ) \To \CC$ is $\infty$-normal, in the sense that it respects limits of monotone well-ordered sequences, but it is not continuum-weakly continuous.

\textbf{Hugh Woodin} explains that, assuming the existence of a proper class of Woodin cardinals that are limits of Woodin cardinals, it is possible to obtain a transitive model of $\mathbf{ZF}$ that is closed under countable unions and satisfies $\mathbf{DC + AC_\RR + AD^+}$, where $\mathbf{AC_\RR}$ denotes the axiom of choice for all families of sets indexed by $\RR$, an axiom that is strictly stronger than $\mathbf{AC_{ae}}$ \cite{Woodin14} \cite{Sargsyan09}. Our motivation for eschewing this convenient assumption is the observation that unnecessary choice principles can interfere with analytic structure, e. g., measure, classification, etc. For a model with such a simple, natural definition, the Chang model is remarkably well suited to our approach. Thus, in a spirit of mathematical realism, we proceed into this natural landscape, rather than retreating into a carefully constructed model for the sake of a choice principle that we do not yet need.

\section{Absoluteness}

Some familiar results fail in the Chang model, but many continue to hold. How can we verify that a familiar theorem continues to hold in the Chang model?

The brute force approach is to carefully check the proof to see whether it uses only a fragment of choice provable from $\mathbf{ZF + DC +AC_{ae}}$. It is necessary to check the proof down to the foundations, i. e., to check also the proofs of logically preceding results. It would take a good deal of time and care to scrutinize the material of the standard textbooks in some subject area. Yet, after this work is complete, we will have established only the most basic results of a single branch of mathematics. The task of scrutinizing a significant portion of published mathematical research in this way is effectively insurmountable.

The better approach is to appeal to the absoluteness of many mathematical properties. This approach demands the we scrutinize the \emph{statement} rather than the proof of a familiar mathematical theorem. The statement of Fermat's last theorem is very simple, and we can easily verify that it holds in the Chang model $\CCC$ without examining its proof at all.

A mathematical property is absolute if it is insensitive to the ambient set theory. Formally, a formula of the language of set theory is absolute in case it holds for a tuple of sets in $\CCC$ iff it holds for that tuple in $\VVV$, and a mathematical property is absolute in case it is expressed by an absolute formula in the standard interpretation of mathematics in set theory. This condition is more properly termed ``absolute for $\CCC$'', but the Chang model is the only inner model that we discuss, so the term ``absolute'' always implicitly refers to absoluteness for the Chang model.

A theorem is a provable property of the empty tuple. Thus, an absolute theorem is true in the Chang model, and moreover, any proposition that is established in the Chang model is also true in the full set theoretic universe if its statement is absolute. Arithmetic statements, such as Fermat's last theorem are of this kind. However, we will mostly verify theorems in $\CCC$ using absolute properties of nonempty tuples: if an absolute property holds for all sets in $\VVV$, then it holds for all sets in $\CCC$, simply because every member of $\CCC$ is a member of $\VVV$.

\subsection*{Summary}
We use the terms ``mathematical object'' and ``mathematical property'' as primitive notions, whose meaning is intuitively clear. The standard interpretation is a particular assignment of hereditary sets to mathematical objects, and of set theoretic formulas to mathematical properties, such that a given property holds for a tuple of objects iff the corresponding formula is true of the corresponding tuple of sets. The Chang model is a class of hereditary sets, and we say that an object is in the Chang model iff the corresponding hereditary set is in the Chang model. We also say that a property holds in the Chang model iff the corresponding formula is true in the Chang model. Finally, we say that a property is absolute in case each tuple of objects in the Chang model has that property in the Chang model iff it actually has that property.

\begin{principle}
Let $\tau$ be a theorem of ordinary mathematics, i. e., of $\mathbf{ZF +AC}$. If $\tau$ is of the form $$\forall x_1\: \forall x_2\: \cdots \forall x_n\: \pi(x_1, \ldots x_n)$$ for some \emph{absolute} property $\pi$, then $\tau$ holds in the Chang model $\CCC$.
\end{principle}

\section{Establishing absoluteness}

The core of this section is a list of absolute properties that the reader can use to verify theorems as they come up. This list is preceded by several clarifications to help orient the reader.

\begin{remark}
The absoluteness of a formula of the language of set theory is usually a straightforward consequence of the following basic principles, whose meaning we do not explain:
\begin{enumerate}[i.]
\item Absolute formulas are closed under Boolean combination and bounded quantification.
\item Being the initial segment $V_{\omega +1}$ of the set theoretic universe is absolute.
\item Being the set of countable sequences from a given set is absolute.
\item Being the image of a given set under an absolute functional class is absolute. 
\end{enumerate}
These principles are sufficient to obtain the provided list of absolute properties in the standard interpretation. Furthermore, a minority of the properties on this list is sufficient to obtain the rest of the list without reference to the standard interpretation, because complex mathematical objects such as C*-algebras are widely understood to be defined in terms of basic mathematical objects such as sets, functions, and numbers.
\end{remark}

\begin{remark}
For convenience, we rely on one convention of the standard interpretation: we assume that each structure, such as a group, a metric spaces, or an operator algebra, is the tuple of its parts, in the style of Bourbaki. For example a group is a $4$-tuple $(G, \,\cdot\,, e, \inv)$. This frees us from having to state separately that, for example, being the metric of a metric space is an absolute property.
\end{remark}

\begin{remark}
Mathematical assertions are often phrased in terms of definite descriptions such as ``the set of natural numbers'' or ``the spectrum of $a$''. We enable definite descriptions by extending the language of set theory to include terms of the form
$$\iota x\: \pi(x, y_0, \ldots, y_n),$$
which denotes the unique $x$ that satisfies $\pi(x,y_0, \ldots, y_n)$ for the tuple $(y_0, \ldots, y_n)$, provided there exists a unique such $x$ for all tuples $(y_0, \ldots, y_n)$. If there is a unique such $x$ in both the set theoretic universe $\VVV$, and in the Chang model $\CCC$, and furthermore the formula $\pi(x, y_0, \ldots, y_n)$ is absolute, then for all values of $(y_0, \ldots, y_n)$ in $\CCC$, the term $\iota x\: \pi(x, y_0, \ldots, y_n)$ denotes the same set in $\VVV$ as it denotes in $\CCC$; in this case we say that $\iota x\: \pi(x, y_0, \ldots, y_n)$ is absolute.

If the property $\ppp$ is expressed by the formula $\pi(x,y_0,\ldots,y_n)$, then the definite description ``the $x$ such that $\ppp$'' is expressed by the term $\iota x\: \pi(x, y_0, \ldots, y_n)$, and we say that this definite description is absolute in case the term $\iota x\: \pi(x, y_0, \ldots, y_n)$ is absolute. In practice, a definite description is only meaningful for certain types of parameters, e. g., ``the Euler characteristic of $X$'' is not defined for C*-algebras $X$. We therefore adopt the convention that definite descriptions denote the empty set for parameters that are not of the appropriate type. This convention is of no practical significance, as a careful use of definite descriptions is a basic part of natural mathematical thought.

We summarize the situation by saying that we may freely use definite descriptions, just as we do in ordinary mathematical discourse, and that an absolute definite description is one that denotes the same object in both $\CCC$ and $\VVV$. A definite description ``the $x$ such that $\ppp$'' is absolute if $\ppp$ is an absolute property that is a definition of $x$ in both $\VVV$ and $\CCC$, and the parameters of $\ppp$ range over an absolute class, i. e., over all tuples satisfying a given absolute property.

We will use the notation $[x \rightsquigarrow \ttt]$ to denote the substitution of the definite description $\ttt$ for the variable $x$.
\end{remark}

\begin{remark}
``The class of groups'' is not a definite description in the sense above. We can express individual classes by formulas, but we cannot encode all classes by hereditary sets; this would lead to Russell's paradox. We will instead say that ``the class of groups'' is a class description, and that a class description ``the class of $x$ such that $\ppp$'' is absolute in case $\ppp$ is absolute and every object $x$ satisfying $\ppp$ for appropriate parameters in the Chang model is itself in the Chang model. Note that if the class of objects $x$ such that $\ppp$ is always a set in both $\VVV$ and $\CCC$, then ``the class of $x$ such that $\ppp$'' is absolute iff ``the set of $x$ such that $\ppp$'' is absolute.

We treat category descriptions in the same way, but with some sensitivity to the category-theoretic viewpoint. A category description consists of properties that specify the class of objects, the class of morphisms, the composition of morphisms, etc. To be termed absolute, a category description should satisfy the following conditions: First, its constituent properties should be absolute. Second, for all appropriate parameters from the Chang model $\CCC$, every morphism between objects in $\CCC$ should itself be in $\CCC$. Third, for all appropriate parameters from $\CCC$, every object should be in $\CCC$ \emph{up to isomorphism}; we do not ask that these isomorphisms be canonical in any way. Thus the two categories need not be equal, but they must be weakly equivalent.  Note that the class description ``the class of $x$ such that $\ppp$'' is absolute iff the category description ``the category of $x$ such that $\ppp$ with only identity morphisms'' is absolute. Thus, definite descriptions of sets can be viewed as a special case of class descriptions, which, in turn, can be viewed as a special case of category descriptions.

We remark that, because the axiom of choice fails in the Chang model, when working with categories in the Chang model, the appropriate notion of equivalence of categories is anaequivalence \cite{Makkai96}.
\end{remark}

Below, $\ppp$ and $\qqq$ denote arbitrary absolute properties, $\ttt$ and $\sss$ denote arbitrary absolute definite descriptions, and $\nnn$ denotes an arbitrary numeral. All other symbols denote \emph{variables}, and symbols in parentheses denote \emph{bound variables} that do not name objects for which the stated property holds or fails. The following are absolute:

\begin{enumerate}
\item\label {A2}$\sss[x \rightsquigarrow \ttt]$
\item $\ppp[x \rightsquigarrow \ttt]$
\item not $\ppp$
\item $\ppp$ and $\qqq$ 
\item $\ppp$ or $\qqq$ 
\item if $\ppp$ then $\qqq$

\item $x$ is equal to $y$

\item $X$ is a set 
\item $x$ is an element of the set $X$
\item the set of elements ($x$) of the set $X$ such that $\ppp$
\item the set $X$ is a subset of the set $Y$
\item the union of sets $X$ and $Y$
\item the intersection of sets $X$ and $Y$
\item the set $X$ excluding the elements of the set $Y$
\item the union of the sets in the set $X$

\item the set $X$ is empty
\item there exists an element ($x$) in the set $X$ such that $\ppp$
\item for all elements ($x$) of the set $X$ it is the case that $\ppp$

\item $f$ is a function 
\item the domain of the function $f$
\item the codomain of the function $f$
\item\label{A1} the value of the function $f$ at an element $x$ of its domain
\item the function $f$ is injective
\item the function $f$ is surjective
\item the range of the function $f$
\item the surjective function that maps each element ($x$) of the set $X$ to $\ttt$
\item the inclusion function of the subset $X$ into the set $Y$
\item the identity function on the set $X$
\item the composition of composable functions $f$ and $g$
\item the inverse of invertible function $f$

\item the set of natural numbers
\item the natural number $\nnn$
\item the sum of natural numbers $n$ and $m$
\item the product of natural number $n$ and $m$
\item the natural number $n$ is less than the natural number $m$
\item the set of functions from the set of natural numbers to the set $X$

\item the set of $n$-tuples of elements of the set $X$ for the natural number $n$
\item the $m$-th element of the $n$-tuple $\overrightarrow{x}$ for the natural number $m$ less than the natural number $n$
\item the set of $n$-tuples whose $m$-th element for the natural number $m$ less than the natural number $n$ is an element of the $m$-th element of the $n$-tuple $\overrightarrow{X}$ of sets

\item the set $X$ is finite
\item the set $X$ is countable
\item the set of finite subsets of $X$
\item the set of countable subsets of $X$

\item\label{A3} the set of functions from the countable set $X$ to the set $Y$

\item $R$ is a binary relation
\item the domain of the binary relation $R$
\item the codomain of the binary relation $R$
\item the binary relation $R$ is symmetric
\item the binary relation $R$ is antisymmetric
\item the binary relation $R$ is transitive
\item the binary relation $R$ is reflexive
\item the binary relation $R$ is an equivalence relation
\item the set of equivalence classes of the equivalence relation $R$
\item the function taking each element of the domain of the equivalence relation $R$ to its equivalence class

\item $G$ is a group
\item $R$ is a ring
\item $F$ is a field
\item $A$ is an algebra
\item $V$ is a vector space
\item $X$ is a \underline{basic space}, i. e., $X$ is a set equipped with a topological basis
\item the function $f$ from the basic space $X$ to the basic space $Y$ is continuous
\item the closure of the subset $Y$ of the basic space $X$
\item $f$ is a net in the set $X$
\item the net $f$ in the basic space $X$ converges to the point $x$
\item the basic space $X$ is separable
\item the basic space $X$ is Polish, i. e., it is a set equipped with a separable completely metrizable topology
\item the set of continuous maps from the Polish space $X$ to Polish space $Y$
\item the set of open subsets of Polish space $X$
\item the set of closed subsets of Polish space $X$
\item the set of Borel subsets of Polish space $X$
\item the set of Borel functions from the Polish space $X$ to the Polish space $Y$

\item the ring of integers
\item the field of real numbers
\item the field of complex numbers

\item the function $f$ between Euclidean spaces is smooth
\item $M$ is a smooth manifold
\item the function $f$ between smooth manifolds is smooth

\item the extended real line
\item the Lebesgue integral as a function from Borel sets of real numbers to the extended real line
\item the set $X$ of real numbers is Lebesgue measurable
\item the extended complex plane
\item the function $f$ is holomorphic
\item the function $f$ is entire

\item $X$ is a metric space
\item the basic space obtained from the metric space $X$, i. e., the set of points of the metric space $X$ equipped with the topological basis of open balls
\item the metric space $X$ is complete
\item $\X$ is a Banach space
\item the dual space of the separable Banach space $\X$
\item $\X$ is a Banach algebra
\item $\H$ is a Hilbert space

\item the subset $X$ of the Hilbert space $\H$ is an orthonormal basis
\item $x$ is a bounded operator on the Hilbert space $\H$
\item $A$ is a concrete C*-algebra of operators on a Hilbert space $\H$
\item the space of ultraweakly continuous functionals on the concrete C*-algebra $A$

\item $A$ is an abstract C*-algebra
\item $\mu$ is a state on the abstract C*-algebra $A$
\item $\pi$ is a $*$-homomorphism from the abstract C*-algebra $A$ to the abstract C*-algebra $B$
\item $\rho$ is a $*$-homomorphic action of the abstract C*-algebra $A$ on the Hilbert space $\H$

\item $\H$ is a separable Hilbert space
\item the set of all separable concrete C*-algebras on the separable Hilbert space $\H$
\item the set of all von Neumann algebras on the separable Hilbert space $\H$

\item the state space of the separable C*-algebra $A$
\item the C*-algebra $A$ is approximately finite dimensional
\item the von Neumann algebra $M$ on a separable Hilbert space $\H$ is approximately finite dimensional

\item the separable C*-algebra $A$ is type I
\item the von Neumann algebra $M$ on the separable Hilbert space $\H$ is type $\mathrm{I}$
\item the von Neumann algebra $M$ on the separable Hilbert space $\H$ is type $\mathrm{II}$
\item the von Neumann algebra $M$ on the separable Hilbert space $\H$ is type $\mathrm{III}$

\item the factor $M$ on the separable Hilbert space $\H$ is type $\mathrm{I}_n$ for the natural number $n$
\item the factor $M$ on the separable Hilbert space $\H$ is type $\mathrm{I}_\infty$
\item the factor $M$ on the separable Hilbert space $\H$ is type $\mathrm{II}_1$
\item the factor $M$ on the separable Hilbert space $\H$ is type $\mathrm{II}_\infty$
\item the factor $M$ on the separable Hilbert space $\H$ is type $\mathrm{III}_\lambda$ for the real number $\lambda$

\item the category of countable graphs and graph morphisms
\item the category of countable groups and group homomorphisms
\item the category of second countable locally compact Hausdorff spaces and continuous maps
\item the category of second countable locally compact groups and continuous group homomorphisms
\item the category of second countable compact Hausdorff spaces and continuous maps
\item the category of complete separable metric spaces and contractive maps 
\item the category of Polish spaces and continuous maps
\item the category of standard Borel spaces and measurable functions
\item the category of separable Banach spaces and bounded linear maps
\item the category of separable Hilbert spaces and bounded linear maps 
\item the category of separable C*-algebras and bounded linear maps
\item the category of separable C*-algebras and $*$-homomorphisms
\item the category of separable C*-algebras and C*-morphisms
\item the category of von Neumann algebras on separable Hilbert spaces and ultraweakly continuous linear maps
\item the category of von Neumann algebras on separable Hilbert spaces and ultraweakly continuous unital $*$-homomorphisms
\item the category of Borel equivalence relations on Polish spaces and Borel reductions

\item $\alpha$ is an ordinal
\item the ordinal $\alpha$ is less than the ordinal $\beta$
\item the set of countable ordinals
\item the smallest uncountable ordinal
\end{enumerate}

\begin{example}[terms and equations]
We begin with item \ref{A1}: ``the value of the function $f$ at an element $x$ of its domain'' is an absolute definite description; symbolically, ``$f(x)$'' is absolute. Item \ref{A2} explains that substitution preserves absoluteness, so nested terms such as ``$f(g(h(x)))$'' are absolute. Next, we can deduce that being the $\mmm$-th element of a pair, or any $\nnn$-tuple is absolute, so ``the pair whose first element is $x_0$ and whose second element is $x_1$'' is an absolute definite description; symbolically ``$(x_0,x_1)$'' is absolute, and similarly for all $\nnn$-tuples. We now combine these observations to deduce that arbitrary terms formed of nested function symbols of arbitrary arity, e. g., ``$f(g(x,y),h(y))$'', are absolute. Finally, since the substitution of absolute definite descriptions into an absolute property yields another absolute property, we deduce that any equation is absolute.
\end{example}

\begin{example}[Associativity of addition]
The example above shows that the equation $(n+m)+k = n+(m+k)$ is absolute, so we might be tempted to jump to the conclusion that addition of natural numbers is associative in the Chang model, because it is associative normally, but we have said nothing about the variables $n$, $m$, $k$ and $+$! However, the property ``if $+$ is the addition of natural numbers and $n$ is a natural number and $m$ is a natural number and $k$ is a natural number, then $(n+m)+k = n+(m+k)$'' is absolute; since it holds for \emph{all} objects in $\VVV$, it holds for \emph{all} objects in $\CCC$. Thus, the associativity of addition is verified in the Chang model.
\end{example}

\begin{example}[quantification]
Consider the equation ``$x\cdot x \cdot x = 2$''. It is absolute, together with the specifications that $x$ denotes a real number, that $\cdot$ denotes muliplication, and that $2$ denotes the number two. This equation has a solution in the Chang model, just as it does normally, because ``there exists an element $x$ of the set of real numbers such that $x \cdot x \cdot x = 2$'' is absolute. We may similarly verify the solution of $x \cdot x \cdot x =2$ is unique in the Chang model, and thereby establish that the definite description ``the cube root of two'' is absolute.

It is crucial that we quantify over a set! The property ``$X$ is a nonmeasurable set of real numbers'' is absolute, but ``there exists an $X$ such that $X$ is a nonmeasurable set of real numbers'' is certainly not. The property ``there exists an element of the set of sets of real numbers $X$ such that $X$ is a nonmeasurable set of real numbers'' quantifies over the set of sets of real numbers, but it is not absolute because the definite description ``the set of sets of real numbers'' is not absolute. 
\end{example}

\begin{example}[Fermat's last theorem]
In order to show that the inequality ``$a^n + b^n \neq c^n$'', together with its usual specifications, is absolute, we need only to show that ``the exponentiation of natural numbers'' is an absolute definite description. The property ``$exp$ is a function, and the domain of $exp$ is the Cartesian square of the set of natural numbers, and the codomain of $exp$ is the set of natural numbers, and for all natural numbers $n$, $exp(n,0) = 1$, and for all natural numbers $n$ and $m$, $exp(n,m+1) = exp(n,m) \cdot n$'' is absolute, so we can verify that there is a unique such object in the Chang model by quantifying over ``the set of functions from the Cartesian square of the set of natural numbers to the set of natural numbers''; this is an absolute definite description by item \ref{A3}. Therefore, ``if $n$ is greater than two, then $a^n + b^n \neq c^n$'' is absolute, so we have verified that Fermat's last theorem holds in the Chang model.
\end{example}

\begin{example}[Fuglede's theorem: If $x$ and $y$ are bounded operators on a Hilbert space $\H$ and $y$ is normal, then $x y = yx$ implies that $x y^* = y^* x$]
We deduce that the following are absolute:
\begin{itemize}
\item $\H$ is a Hilbert space
\item $x$ and $y$ are bounded operators on $\H$
\item $y y^* = y^* y$
\item $xy = yx$ implies that $x y^* = y^*x$
\item if $x$ and $y$ are bounded operators on the Hilbert space $\H$, and $y$ is normal, then $x y = yx$ implies that $x y^* = y^* x$
\end{itemize}
\end{example}

\begin{example}[Kaplansky's density theorem: If $A\subsetof \B(\H)$ is a concrete C*-algebra, then the unit ball of $A$ is strongly dense in the unit ball of the strong closure of $A$] We deduce that the following are absolute:

\begin{itemize}
\item $A$ is a concrete C*-algebra on the Hilbert space $\H$
\item $x$ is a contraction on the Hilbert space $\H$ 
\item $a$ is in the unit ball of $A$
\item $\xi$ is in $\H$ and $\|(a-x) \xi\| \leq 1$
\item $T$ is a finite subset of $\H$ such that there exists an element $a$ of the concrete C*-algebra $A$ on the Hilbert space $\H$ such that for all $\xi$ in $T$ it is the case that $\|(a-x) \xi\| \leq 1$
\item $T$ is a finite subset of $\H$ such that there exists an element $a$ of the unit ball of the concrete C*-algebra $A$ on the Hilbert space $\H$ such that for all $\xi$ in $T$ it is the case that $\|(a-x) \xi\| \leq 1$
\item if $A$ is a concrete C*-algebra on the Hilbert space $\H$, and $x$ is a contraction on $\H$ with the property that every SOT neighborhood of $x$ contains an element of $A$, then every SOT neighborhood of $X$ contains an element of the unit ball of $A$
\end{itemize}
\end{example}

\begin{example}[Gelfand duality for separable commutative C*-algebras: If $A$ is a separable commutative C*-algebra then $A \iso C_0(\hat A)$]
We deduce that the following are absolute:
\begin{itemize}
\item $A$ is a separable commutative C*-algebra
\item the space of homomorphic states of the separable commutative C*-algebra $A$
\end{itemize}
We verify that the spectrum of a separable commutative C*-algebra is a Polish space. We deduce that the following are absolute:
\begin{itemize}
\item the set of continuous functions from the space of homomorphic states on the separable commutative C*-algebra $A$ to the set of complex numbers that vanish at infinity
\item if $A$ is a separable commutative C*-algebra, then $C_0(\hat A)$ is a C*-algebra 
\end{itemize}
We verify that if $A$ is a separable commutative C*-algebra, then $C_0(\hat A)$ is separable and complete. We deduce that the following are absolute:
\begin{itemize}
\item the set of $*$-isomorphisms from the separable commutative C*-algebra $A$ to $C_0(\hat A)$
\item the separable commutative C*-algebra $A$ is isomorphic to $C_0(\hat A)$
\end{itemize}
\end{example}

\begin{example}[If $V$ is a closed subspace of the Hilbert space $\H$, then there exists an orthogonal projection operator $p$ such that $p\H = V$]
We deduce that the following are absolute:
\begin{itemize}
\item $V$ is a closed subspace of the Hilbert space $\H$
\item there exists an element of the closed subspace $V$ of the Hilbert space $\H$ closest to the element $\xi$ of $\H$
\end{itemize}
We verify that if $V$ is a closed subspace of $\H$, and $\xi$ is an element of $\H$, then there exists a unique element of $V$ closest to $\xi$. We deduce that the following are absolute:
\begin{itemize}
\item the element of the closed subspace $V$ of the Hilbert space $\H$ closest to the element $\xi$ of $\H$
\item the function taking each element of the Hilbert space $\H$ to the closest element of the closed subspace $V$
\item the function taking each element of the Hilbert space $\H$ to the closest element of the closed subspace $V$ is an orthogonal projection operator on $\H$ whose image is $V$
\end{itemize}
We verify that if $V$ is a closed subspace of the Hilbert space $\H$, then the function $p$ taking each element of $\H$ to the closest element of $V$ is an orthogonal projection operator on $\H$ such that $p\H = V$; thus, such an orthogonal projection operator exists.
\end{example}

\section{Cheat sheet}\label{cheatsheet}

This section lists theorems in the theory of operator algebras that hold in the Chang model. A theorem that is verifiable by a straightforward absoluteness argument is terminated with a period. A theorem whose verification requires the scrutiny of a substantial part of its usual proof for applications of the axiom of choice is terminated with two periods.. A theorem whose verification requires a proof different from its usual proof is terminated with three periods... A theorem which fails ordinarily, but which holds in the Chang model is punctuated with a exclamation mark! Theorems punctuated in these last two ways are addressed in \cref{appendix1}.

The following theorems hold in the Chang model:

\medskip

\noindent Let $X$ be a metric space.
\begin{remark}\label{v1}
The metric space $X$ has a completion..
\end{remark}

\begin{remark}\label{v2}
If $X$ is complete, then the intersection of a countable family of dense open sets is dense.
\end{remark}

\noindent Let $X$ be a complete separable metric space.

\begin{remark}[cf. \cite{Kechris95} theorem 15.6]\label{v3}
If $X$ is uncountable, then there is a bijection from $\II$ to $X$ such that the preimage any Borel set is Borel, and the image of any Borel set is Borel.
\end{remark}

\begin{remark}[cf. \cite{Kechris95} theorem 17.41]\label{v3.1}
If $m$ is an atomless Borel probability measure on $X$, then there is a bijection from $\II$ to $X$ such that the preimage of any Borel set is Borel, the image of any Borel set is Borel, and the pushforward of Lebesgue measure on the unit interval is $m$.
\end{remark}

\begin{remark}\label{v3.2}
If $m$ is a Borel probability measure on $X$, then for every subset $S \subsetof X$, there are Borel subsets $B_0, B_1 \subsetof X$ such that $B_0 \subsetof S \subsetof B_1$ and $m(B_0) = m(B_1)$!
\end{remark}

\noindent Let $m$, $m_0$, and $m_1$ be (totally defined) probability measures on sets $T$, $T_0$, and $T_1$, respectively, with each set injectable into $\RR$.

\begin{remark}\label{v3.4}
The measure $m$ is a pushforward of Lebesgue measure on $\II$, i. e., there is a function $f\: \II \To T$ such that $m(X)$ is equal to the Lebesgue measure of $f\inv(X)$ for each $X \subsetof T$!
\end{remark}

\begin{remark}\label{v3.5}
If $f \: T_0 \times T_1 \To \CC$ is a function such that
\begin{enumerate}
\item $\int_{t_0 \in T_0} \int_{t_1 \in T_1} |f(t_0, t_1)|\, dm_1\, dm_0 < \infty $,
\item $\int_{(t_0, t_1) \in T_0 \times T_1} |f(t_0, t_1)|\, d(m_0 \times m_1) < \infty$, or
\item $\int_{t_1 \in T_1} \int_{t_0 \in T_0} |f(t_0, t_1)|\, dm_0 \, dm_1< \infty$,
\end{enumerate}
then
\begin{align*}
\int_{t_0 \in T_0} \int_{t_1 \in T_1} f(t_0, t_1)\, dm_1\, dm_0 & = \int_{(t_0, t_1) \in T_0 \times T_1} f(t_0, t_1)\, d(m_0 \times m_1)
\\ &= \int_{t_1 \in T_1} \int_{t_0 \in T_0} f(t_0, t_1)\, dm_0 \, dm_1!
\end{align*}
\end{remark}

\noindent Let $X$ be a topological space, let $Y \subsetof X$ be a subspace, and let $f\: X \To Z$ be a function to another topological space.

\begin{remark}[cf. \cite{Pedersen88} proposition 1.3.6]\label{v3.73}
A point belongs to the closure of the set $Y$ iff there is a net in $Y$ converging to that point...
\end{remark}

\begin{remark}[cf. \cite{Pedersen88} proposition 1.4.3]\label{v3.75}
The function $f$ is continuous at $x \in X$ iff for every net $(x_\lambda)$ converging to $X$, the net $(f(x_\lambda))$ converges to $f(x)$... 
\end{remark}

\noindent Let $\X$ and $\Y$ be Banach spaces.

\begin{remark}\label{v4}
Any linear function from $\X$ to $\Y$ is bounded!
\end{remark}

\begin{remark}\label{v5}
A surjective linear function from $\X$ to $\Y$ is open!
\end{remark}

\begin{remark}\label{v6}
If $\{T_\lambda\}$ is a family of (bounded) linear functions from $\X$ to $\Y$ such that $\{T_\lambda x\}$ is bounded for all $x \in X$, then $\{\|T_\lambda\|\}$ is bounded. 
\end{remark}

\noindent Let $\X$ be a separable Banach space.

\begin{remark}\label{v7}
Every bounded complex-valued linear function defined on a subspace of $\X$ extends to a linear function of the same norm on all of $\X$.
\end{remark}

\begin{remark}\label{v8}
The unit ball of $\X^*$ is Polish and compact, in the weak* topology.
\end{remark}

\begin{remark}\label{v9}
If $K \subsetof \X^*$ is bounded and weak*-closed, then every element of $K$ is in the weak*-closed convex hull of the extreme points of $K$, and is the barycenter of a Borel probability measure concentrated on the extreme points of $K$.
\end{remark}

\noindent Let $\H$ be a Hilbert space.

\begin{remark}
If $\varphi$ is a functional on $\H$, then there exists a vector $\xi \in \H$ such that $\varphi(\eta) = \< \xi | \eta\>$ for all $\eta \in \H$.
\end{remark}

\begin{remark}
If $\psi$ is a bounded sesquilinear form on $\H$, then there exists a bounded operator $x$ on $\H$ such that $\psi(\xi, \eta) = \< \xi | x \eta\>$ for all $\xi, \eta \in \H$.

\end{remark}

\begin{remark}
The function $p \mapsto p\H$ is a bijection between projection operators on $\H$ and its closed subspaces.
\end{remark}

\begin{remark}Every monotonically decreasing net of positive operators on $\H$ has a greatest lower bound, and converges to it ultrastrongly.
\end{remark}

\noindent Let $A \subsetof \B(\H)$ be a nondegenerate concrete C*-algebra.

\begin{remark}\label{v20}
If $\varphi$ is a vector functional on $A$, then there exist $\xi, \eta \in \H$ such that $\|\xi\|^2 = \|\varphi \| = \|\eta\|^2$ and $\varphi(a) = \<\eta| a \xi\>$ for all $a \in A$.
\end{remark}

\begin{remark}\label{v14}
If $\varphi$ is a vector functional on $A$, then there exist unique positive functionals $\varphi_+$ and $\varphi_-$ on $A$ such that $\varphi = \varphi_+ - \varphi_-$ and $\|\varphi\| = \|\varphi_+\| + \|\varphi_-\|$.
\end{remark}

\begin{remark}\label{v14.1}
If $\varphi$ is a vector functional on $A$, then there exists a unique positive functional $|\varphi|$ on $A$ such that $\|\,|\varphi|\,\| = \|\varphi\|$ and $|\varphi(a)|^2 \leq \|\varphi\| |\varphi|(a^*a)$ for all $a \in A$.
\end{remark}

\begin{remark}\label{v21}
The ultraweak closure of $A$ is equal to its double commutant $A''$..
\end{remark}

\begin{remark}\label{v22}
If $x$ is in the ultraweak closure of $A$, and $\|x\| \leq 1$, then $x$ is in the ultraweak closure of the unit ball of $A$.
\end{remark}

\begin{remark}[cf. \cite{Pedersen79} theorem 2.4.3]\label{v23}
If $\H$ is separable, then $A$ is a von Neumann algebra iff it closed under limits of ascending sequences.
\end{remark}

\begin{remark}[cf. \cite{Pedersen79} section 3.12]\label{v24}
The centralizers of $A$ form a C*-algebra that is isomorphic to the C*-algebra of multipliers of $A$.
\end{remark}

\noindent Let $B \subsetof \B(\H)$ be a concrete C*-algebra that is closed under limits of ascending sequences, e. g., a von Neumann algebra; see \cite{Pedersen79}*{section 4.5}.

\begin{remark}\label{v25}
The support projection of every self-adjoint operator in $B$ is itself in $B$.
\end{remark}

\begin{remark}\label{v26}
The projections of $B$ are closed under countable meets and joins.
\end{remark}

\begin{remark}\label{v27}
For every operator $b \in B$, there exists a unique partial isometry $u$ such that $u^*u$ is the support projection of $|x| = (x^* x)^{\frac 1 2}$ and $x = u |x|$.
\end{remark}

\noindent Let $A$ be an abstract C*-algebra.

\begin{remark}\label{v10}
The positive elements of $A$ of norm strictly less than $1$ form an approximate unit. 
\end{remark}

\begin{remark}\label{v12}
Every normal element of $A$ has the continuous functional calculus.
\end{remark}

\begin{remark}[GNS]\label{v13}
For each state $\mu$ on $A$, there exists a representation $\gamma_\mu\: A \To \B(\H_\mu)$ and a cyclic vector $\xi_\mu$ such that $\< \xi_\mu | \gamma_\mu(a) \xi_\mu \> = \mu(a)$ for all $a \in A$..
\end{remark}

\begin{remark}\label{v.13.1}
If $\pi\: A \To \B(\H)$ is a representation with cyclic vector $\eta_0$, $\mu = \<\eta_0| \pi(\cdot) \eta_0\>$, and $\gamma_\mu\: A \To \B(\H_\mu)$ is the GNS representation for $\mu$, then there exists a unique unitary operator $u$ from $\H$ to $\H_\mu$ such that $u \eta_0 = \xi_\mu$ and $u \pi(a) = \gamma_\mu(a) u$ for all $a \in A$..
\end{remark}

\begin{remark}\label{v13.2}
For each state $\mu$ on $A$, the GNS representation $\gamma_\mu\: A \To \B(\H_\mu)$ is irreducible iff $\mu$ is pure..
\end{remark}

\begin{remark}\label{v15}
The ultraweak closure of $A$ in its universal representation is an enveloping von Neumann algebra of $A$, i. e., every $*$-homomorphism from $A$ into a von Neumann algebra factors uniquely through this ultraweak closure via an ultraweakly continuous $*$-homomorphism...
\end{remark}

\noindent Let $A$ be a separable C*-algebra.

\begin{remark}\label{v16}
The universal representation of $A$ is faithful.
\end{remark}

\begin{remark}\label{v17}
If $A$ is commutative, then the spectrum $\hat A$ is a locally compact Polish space such that $A \iso C_0(\hat A)$.
\end{remark}

\begin{remark}\label{v18}
The convex hull of the pure states of $A$, together with $0$, is weak* dense in the quasi-state space of $A$. Every state of $A$ is the barycenter of a Borel probability measure on its pure state space.
\end{remark}

\begin{remark}[cf. \cite{Pedersen79} theorem 6.8.7]\label{v19}
The following are equivalent:
\begin{enumerate}[\qquad(i)]
\item $A$ is a C*-algebra of type I,
\item $\Bb(A)$ is a Borel $*$-algebra of type I.
\setcounter{enumi}{3}
\item $A$ has a composition series in which each quotient has continuous trace.
\item The image of every irreducible representation of $A$ includes the compact operators.
\item Two irreducible representations of $A$ are unitarily equivalent iff they have the same kernel.
\item The Borel structure on $\hat A$ generated by the Jacobson topology is standard.
\item Pedersen's Davies Borel structure on $\hat A$ is countably separated.
\item The Mackey Borel structure on $\hat A$ is countably separated.
\item Every factor representation of $A$ is type I.
\item $A$ has no factor representations of type II.
\item $A$ has no factor representations of type III.
\end{enumerate}
\end{remark}

\begin{remark}[cf. \cite{Pedersen79} proposition 6.3.2]\label{v19.5} If $A$ is type I, then the Mackey Borel structure and Pedersen's Davies Borel structure coincide with the Borel structure on $\hat A$ generated by the Jacobson topology.
\end{remark}

\noindent Let $M, N \subsetof \B(\H)$ be  von Neumann algebras on a separable Hilbert space.

\begin{remark}\label{v28}
Every functional on $M$ is ultraweakly continuous!
\end{remark}

\begin{remark}\label{v30}
If $M$ and $N$ are approximately finite dimensional factors, both of type $\mathrm{I}_n$ for some $n \in \NN \cup \{\infty\}$, of type $\mathrm{II}_k$ for some $k \in \{1, \infty\}$, or of type $\mathrm{III}_\lambda$ for some $\lambda \in (0,1]$, then $M \iso N$.
\end{remark}

\section{Basic definitions}\label{part2}

\begin{remark}\label{1}
A \underline{continuum} in a set $X$ is a function $\II \To X$.
\end{remark}

\begin{definition}\label{2}
Let $\H$ be a Hilbert space. The \underline{continuum-weak topology} on $\B(\H)$ is given by functionals of the form 
\begin{align}\tag{I} x \mapsto \int_0^1 \langle \eta_t | x\xi_t \rangle \,dt
\end{align}
for families $(\eta_t \in \H)$ and $(\xi_t \in \H)$ such that the functions $(\|\eta_t\|^2\: t \in \II)$ and $(\|\xi_t\|^2\:t \in \II)$ are integrable with respect to Lebesgue measure.
\end{definition}

\begin{remark}
When working with bounded operators, we will always use the closure line $\overline{(\cdot)}$ to denote closure in the continuum-weak topology.
\end{remark}

\begin{remark}\label{3}
Every probability measure on a set $T \preccurlyeq \RR$ is a pushforward of Lebesgue measure on the unit interval; see \ref{v3.4}. It follows that whenever $(\eta_t)$ and $(\xi_t)$ are families of vectors such that the functions $(\|\eta_t\|^2)$ and $(\|\xi_t\|^2)$ are integrable with respect to a probability measure $m$ on some set $T \preccurlyeq \RR$, the functional $x \mapsto \int_{t \in T} \< \eta_t | x \xi_t\> \,dm$ is of the form (I). Clearly, the same is also true of any finite measure $m$ on a set $T \preccurlyeq \RR$. It follows that functionals of the form (I) are closed under addition and scalar multiplication, so \emph{every} continuum-weakly continuous functional is of the form (I).
\end{remark}

\begin{remark}\label{4}
The continuum-weak topology is stronger than the ultraweak topology, but weaker than the norm topology. 
\end{remark}

\begin{remark}\label{5}
The adjoint operation is continuum-weakly continuous, operator addition is jointly continuum-weakly continuous, and operator multiplication is continuum-weakly continuous in each variable. 
\end{remark}

\begin{definition}\label{6}
A concrete C*-algebra $E \subsetof \B(\H)$ is a \underline{V*-algebra} in case it is closed in the continuum-weak topology.
\end{definition}

\begin{remark}\label{7}
Every von Neumann algebra is a V*-algebra.
\end{remark}

\section{Continuum amplification}

\begin{remark}\label{8}
Let $E \subsetof \B(\H)$ be a V*-algebra. Clearly, a functional $\varphi: E \To \CC$ is continuum-weakly continuous iff it is a vector functional in the canonical representation of $E$ on the Hilbert space $L^2(\II, \H)$. Note that, in general, the isometry $u \: L^2(\II) \tensor \H \To L^2(\II, \H)$ given by $u(f \tensor \xi)(t) = f(t) \xi$ is not unitary.
\end{remark}

\begin{proposition}\label{9}
If $(\varphi_s)$ is an indexed family of continuum-weakly continuous functionals on $E$ such that $(\|\varphi_s\|\: s \in \II)$ is integrable, then the functional $\varphi: x \mapsto \int_0^1 \varphi_s(x) \, ds$ is also continuum-weakly continuous. 

\end{proposition}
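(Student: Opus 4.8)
The plan is to exhibit $\varphi$ explicitly as a functional of the form (I). First I would verify that $\varphi$ is well defined, linear, and bounded: for each fixed $x \in E$ the scalar function $s \mapsto \varphi_s(x)$ is automatically Lebesgue measurable in our model, and $|\varphi_s(x)| \leq \|\varphi_s\|\,\|x\|$, so the integrability of $(\|\varphi_s\|\colon s\in\II)$ gives both that $\int_0^1 \varphi_s(x)\,ds$ converges absolutely and that $|\varphi(x)| \leq \|x\| \int_0^1 \|\varphi_s\|\,ds$; linearity is immediate. It then remains to produce families $(\alpha_u\in\H)$ and $(\beta_u\in\H)$ with $(\|\alpha_u\|^2)$ and $(\|\beta_u\|^2)$ integrable and $\varphi(x) = \int_0^1 \langle \alpha_u | x \beta_u\rangle\,du$.

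The main step is to represent all the $\varphi_s$ simultaneously. By \ref{8}, each continuum-weakly continuous $\varphi_s$ is a vector functional in the canonical representation of $E$ on $L^2(\II,\H)$; restricting to the essential subspace of this representation, on which $E$ acts nondegenerately (its orthogonal complement being annihilated by $E$), \ref{v20} furnishes $\eta^s, \xi^s \in L^2(\II,\H)$ with $\|\eta^s\|^2 = \|\varphi_s\| = \|\xi^s\|^2$ and $\varphi_s(x) = \int_0^1 \langle \eta^s_t | x \xi^s_t\rangle\,dt$. Selecting such a pair for each $s$ is a continuum of choices, and this is exactly where I would invoke $\mathbf{AC_{ae}}$: it yields a full-measure set of $s$ along which $s \mapsto (\eta^s,\xi^s)$ is defined, which is enough, since the discarded null set affects no integral over $s$. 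Setting $\eta(s,t) = \eta^s_t$ and $\xi(s,t) = \xi^s_t$ then defines functions on $\II \times \II$, automatically measurable in our model, and the normalization gives $\int_{\II\times\II}\|\eta\|^2 = \int_0^1 \|\eta^s\|^2\,ds = \int_0^1 \|\varphi_s\|\,ds < \infty$, and similarly for $\xi$, so $\eta$ and $\xi$ are square-integrable over $\II\times\II$.

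Finally I would assemble the pieces. Cauchy--Schwarz in the $t$ variable bounds $\int_0^1 |\langle \eta^s_t | x \xi^s_t\rangle|\,dt$ by $\|\eta^s\|\,\|\xi^s\| = \|\varphi_s\|$, so the integrand is absolutely integrable over $\II\times\II$ and \ref{v3.5} (Fubini) gives $\varphi(x) = \int_0^1\!\int_0^1 \langle \eta^s_t | x \xi^s_t\rangle\,dt\,ds = \int_{\II\times\II}\langle \eta(s,t)\,|\,x\,\xi(s,t)\rangle\,d(s,t)$. Since two-dimensional Lebesgue measure is an atomless Borel probability measure on the Polish space $\II\times\II$, \ref{v3.1} provides a bijection $\theta\colon \II \To \II\times\II$ pushing Lebesgue measure forward to it; putting $\alpha = \eta\circ\theta$, $\beta = \xi\circ\theta$ and changing variables yields $\varphi(x) = \int_0^1 \langle \alpha_u | x \beta_u\rangle\,du$ with $\int_0^1\|\alpha_u\|^2\,du = \int_{\II\times\II}\|\eta\|^2 < \infty$ and likewise for $\beta$. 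Thus $\varphi$ has the form (I) and is continuum-weakly continuous. I expect the only genuine obstacle to be the selection step: without choice the representing pairs $(\eta^s,\xi^s)$ cannot be chosen outright, and it is precisely the combination of $\mathbf{AC_{ae}}$ with the harmless sacrifice of a null set that makes the construction legitimate in the Chang model.
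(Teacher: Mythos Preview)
Your proposal is correct and follows essentially the same route as the paper's proof: represent each $\varphi_s$ as a vector functional on $L^2(\II,\H)$ with the normalization $\|\eta^s\|^2=\|\varphi_s\|=\|\xi^s\|^2$, invoke $\mathbf{AC_{ae}}$ to make this selection for almost every $s$, apply Tonelli and then Fubini to collapse the iterated integral into an integral over $\II\times\II$, and finally reparametrize by $\II$ to obtain the form~(I). Your treatment is in fact slightly more careful than the paper's in two places: you explicitly restrict to the essential subspace before appealing to \ref{v20}, and you spell out the reparametrization via \ref{v3.1} rather than citing \cref{3}.
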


\begin{proof}
Each continuum-weakly continuous functional is a vector functional for the canonical representation $\pi: E \To \B(L^2(\II, \H))$. Therefore, we can apply $\mathbf{AC_{ae}}$ to choose, for almost all $s \in \II$, vectors $\xi^s = [\xi^s_t \:t \in \II]$ and  $\eta^s = [\eta^s_t \:t \in \II]$ in  $L^2(\II, \H)$, such that $\|\xi^s\|^2 = \|\varphi_s\| = \|\eta^s\|^2$, and $\varphi_s: x \mapsto \langle \eta^s |\pi(x) \xi^s \rangle$, i. e.,
$$ 
\int_0^1 \| \xi^s_t\|^2 \, dt  = \|\varphi_s\| = \int_0^1   \|\eta^s_t \|^2 \, dt
$$
and
$$ \varphi_s(x) = \int_0^1 \langle \eta^s_t | x \xi^s_t \rangle \, dt.$$
Applying Tonelli's theorem, we find that the function $(s,t) \mapsto \| \xi^s_t\|^2$ is integrable on $\II \times \II$, as is $(s,t) \mapsto \|\eta^s_t\|^2$, so by Fubini's theorem,
$$\int_0^1 \varphi_s(x) \, ds = \int_0^1\left( \int_0^1 \langle \eta^s_t | x\xi^s_t \rangle \, dt \right) \, ds =\int_{(s,t) \in \II \times \II} \langle \eta^s_t | x\xi^s_t \rangle \, d(s,t) $$
for all $x \in E$. Thus, $\varphi\: x \mapsto \int_0^1 \varphi_s(x)\, ds$ is continuum-weakly continuous by \cref{3}.
\end{proof}

\begin{lemma}\label{10}
Let $(\varphi_n)$ be a norm-convergent sequence of functionals on $E$ converging to $\varphi$. If $\varphi_n$ is continuum-weakly continuous for each $n$, then so is $\varphi$.
\end{lemma}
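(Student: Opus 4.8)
The plan is to realize $\varphi$ as a ``continuum sum'' of continuum-weakly continuous functionals and then invoke \cref{9}. That proposition lets us integrate an $\II$-indexed family of continuum-weakly continuous functionals whose norms are integrable, and the output is again continuum-weakly continuous; so it suffices to exhibit $\varphi$ as one such integral. The whole task is therefore to convert the given norm-convergent \emph{sequence} into an $\II$-indexed \emph{integral}.

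First I would pass to a rapidly convergent subsequence. Since $\varphi_n \to \varphi$ in norm, I can choose a strictly increasing sequence $(n_k)$ with $\|\varphi_{n_{k+1}} - \varphi_{n_k}\| < 2^{-k}$ for all $k$, taking each $n_k$ to be the least admissible index so that no appeal to choice beyond $\mathbf{DC}$ is made. Relabeling, I may assume that the sequence itself satisfies $\|\varphi_{k+1} - \varphi_k\| < 2^{-k}$; this is harmless, because a subsequence of a norm-convergent sequence has the same limit $\varphi$, and continuum-weak continuity of $\varphi$ is exactly what we wish to establish.

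Next I would telescope. Partition $\II$ into half-open intervals $I_0, I_1, I_2, \dots$ of positive lengths $\ell_k = 2^{-(k+1)}$, so that $\sum_k \ell_k = 1$. Define the family $(\psi_s \: s \in \II)$ by $\psi_s = \ell_0\inv \varphi_1$ for $s \in I_0$ and $\psi_s = \ell_k\inv(\varphi_{k+1} - \varphi_k)$ for $s \in I_k$ with $k \geq 1$. Each $\psi_s$ is a scalar multiple of a continuum-weakly continuous functional, hence continuum-weakly continuous, and $s \mapsto \|\psi_s\|$ is a step function with
$$\int_0^1 \|\psi_s\| \, ds = \|\varphi_1\| + \sum_{k \geq 1} \|\varphi_{k+1} - \varphi_k\| < \infty,$$
so $(\|\psi_s\|)$ is integrable. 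For each $x \in E$, integrating over the partition gives a telescoping series
$$\int_0^1 \psi_s(x) \, ds = \varphi_1(x) + \sum_{k \geq 1}\bigl(\varphi_{k+1}(x) - \varphi_k(x)\bigr) = \lim_{N} \varphi_{N+1}(x) = \varphi(x),$$
where the decomposition of $\int_0^1$ into $\sum_k \int_{I_k}$ and the absolute convergence are both justified by $\sum_k \|\varphi_{k+1}-\varphi_k\|\,\|x\| < \infty$. Thus $\varphi = \int_0^1 \psi_s \, ds$, and \cref{9} yields that $\varphi$ is continuum-weakly continuous.

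I expect the only delicate point to be precisely this bookkeeping: the partition-and-rescaling device is what makes a discrete telescoping sum fit the integral hypothesis of \cref{9}. Everything else is routine — the absolute convergence is controlled by the $2^{-k}$ bounds, the map $s \mapsto \|\psi_s\|$ is manifestly measurable as a step function, and the subsequence is selected canonically so that no choice principle stronger than $\mathbf{DC}$ is invoked.
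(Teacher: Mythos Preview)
Your proof is correct and follows essentially the same route as the paper: pass to a rapidly convergent subsequence, telescope, and invoke \cref{9} to realize the resulting series as an integral over $\II$. The only difference is cosmetic: where you build the dyadic partition of $\II$ by hand, the paper simply remarks that every probability measure on a countable set is a pushforward of Lebesgue measure (\ref{v3.4}) and leaves the rescaling implicit; your explicit handling of the first term $\varphi_1$ is in fact slightly cleaner than the paper's terse ``$\varphi = \sum_n \psi_n$''.
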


\begin{proof}
Without loss of generality, we can assume that for all $n$, $\|\varphi_{n+1} - \varphi_n \| \leq 2^{-n}$. Writing $\psi_n = \varphi_{n+1} - \varphi_n$, we have that $\|\psi_n \| \leq 2^{-n}$, so $\sum_n \|\psi_n \| \leq 2$. Applying \cref{9}, and the fact that, by \ref{v3.4}, every probability measure on a countable set is a pushforward of Lebesgue measure on $\II$, we find that $\varphi = \sum_n \psi_n$ is continuum-weakly continuous. 
\end{proof}

\begin{lemma}\label{11}
Each self-adjoint continuum-weakly continuous functional $\varphi$ on $E$ has a Jordan decomposition $\varphi= \varphi_+ - \varphi_-$, where $\varphi_+$ and $\varphi_-$ are positive continuum-weakly continuous functionals, and $\|\varphi\| = \|\varphi_+\| + \|\varphi_-\|$. Thus, every continuum-weakly continuous function on $E$ is a linear combination of continuum-weakly continuous states.
\end{lemma}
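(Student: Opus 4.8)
The plan is to carry the problem into the canonical representation of $E$ on $\K = L^2(\II, \H)$, where continuum-weakly continuous functionals are exactly vector functionals by \cref{8}, and then to produce the Jordan components of $\varphi$ as (sums of) vector functionals, so that their continuum-weak continuity becomes automatic. Concretely, fix $\varphi$ self-adjoint and continuum-weakly continuous. By \cref{8} there are $\eta, \xi \in \K$ with $\varphi(x) = \langle \eta | \pi(x) \xi\rangle$ for $x \in E$, where $\pi$ is the canonical representation; passing to $\overline{E\H}$, I may assume $\pi(E)$ is nondegenerate on $\K$. Applying \ref{v14} to the concrete C*-algebra $\pi(E)$ yields \emph{unique} positive functionals $\varphi_+, \varphi_-$ with $\varphi = \varphi_+ - \varphi_-$ and $\|\varphi\| = \|\varphi_+\| + \|\varphi_-\|$, so the entire remaining content is that $\varphi_\pm$ are continuum-weakly continuous.

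To establish this I would pass to the von Neumann algebra $N = \pi(E)''$. Symmetrizing the vector functional gives a normal self-adjoint functional $\tilde\varphi$ on $N$, namely $\tilde\varphi(y) = \tfrac12\bigl(\langle \eta | y \xi\rangle + \langle \xi | y \eta\rangle\bigr)$, which restricts to $\varphi$ on $\pi(E)$ precisely because $\varphi$ is self-adjoint. The classical Jordan decomposition for normal self-adjoint functionals on a von Neumann algebra provides a projection $e \in N$, with $e^\perp = 1 - e$, such that the positive normal functionals $\tilde\varphi_+(y) = \tilde\varphi(e y e)$ and $\tilde\varphi_-(y) = -\tilde\varphi(e^\perp y e^\perp)$ satisfy $\tilde\varphi = \tilde\varphi_+ - \tilde\varphi_-$ and $\|\tilde\varphi\| = \|\tilde\varphi_+\| + \|\tilde\varphi_-\|$. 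Then for $x \in \pi(E)$,
\[
\tilde\varphi_+(x) = \tilde\varphi(e x e) = \tfrac12\bigl(\langle e\eta | x\, e\xi\rangle + \langle e\xi | x\, e\eta\rangle\bigr),
\]
a sum of two vector functionals, hence of the form (I); by the closure of form (I) under addition and scalar multiplication recorded in \cref{3}, together with \cref{8}, both $\tilde\varphi_+|_{\pi(E)}$ and $\tilde\varphi_-|_{\pi(E)}$ are continuum-weakly continuous.

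It remains to match these restrictions with the components from \ref{v14} and record the norm identity. Since $\pi(E)$ is ultraweakly dense in $N$ by \ref{v21} and contains an approximate unit tending ultraweakly to $1_N$, the norms of $\tilde\varphi, \tilde\varphi_\pm$ on $N$ agree with the norms of their restrictions to $\pi(E)$ (Kaplansky density for $\tilde\varphi$, evaluation along the approximate unit for the positive $\tilde\varphi_\pm$). Thus $\tilde\varphi_\pm|_{\pi(E)}$ are continuum-weakly continuous positive functionals realizing $\varphi = \tilde\varphi_+|_{\pi(E)} - \tilde\varphi_-|_{\pi(E)}$ with matching norm sum, so by the uniqueness clause of \ref{v14} they \emph{are} $\varphi_+, \varphi_-$. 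For the final assertion, an arbitrary continuum-weakly continuous $\psi$ splits as $\psi = \psi_1 + i\psi_2$ with $\psi_1 = \tfrac12(\psi + \psi^\ast)$ and $\psi_2 = \tfrac1{2i}(\psi - \psi^\ast)$ self-adjoint, where $\psi^\ast(x) = \overline{\psi(x^\ast)}$ is again of the form (I) directly from the definition of the continuum-weak topology; normalizing the positive parts of $\psi_1, \psi_2$ exhibits $\psi$ as a linear combination of continuum-weakly continuous states.

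I expect the genuine difficulty to be the projection-valued Jordan decomposition on the von Neumann algebra $N$ in the choiceless setting, together with the self-adjoint normal extension $\tilde\varphi$; once a projection $e \in N$ implementing the decomposition is in hand, the vector-functional form of $\tilde\varphi(e \cdot e)$ renders continuum-weak continuity immediate through \cref{3}, and the norm bookkeeping is routine given the ultraweak density of $\pi(E)$ in $N$.
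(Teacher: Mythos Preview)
Your proposal is correct and follows essentially the same route as the paper: represent $E$ on $L^2(\II,\H)$ so that continuum-weakly continuous functionals are exactly vector functionals (\cref{8}), and then invoke the Jordan decomposition of a self-adjoint vector functional on a concrete C*-algebra. The paper's proof is two sentences, citing \ref{v14} directly for the claim that the Jordan components of a self-adjoint vector functional are themselves vector functionals; you instead unpack this by passing to $N = \pi(E)''$, using the projection $e$ implementing the normal Jordan decomposition, and reading off $\tilde\varphi_\pm$ as explicit sums of vector functionals---which is precisely the standard argument underlying \ref{v14}, so you have reproduced rather than circumvented what the paper packages into that citation.
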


\begin{proof}
Each self-adjoint continuum-weakly continuous functional $\varphi$ is a vector functional for the canonical representation of $E$ on $L^2(\II, \H)$. Every self-adjoint vector functional on a concrete C*-algebra has a Jordan decomposition into vector functionals (\ref{v14}).
\end{proof}

\begin{lemma}\label{12}
Let $\mu$ be a continuum-weakly continuous state on $E$. There exists a family $(\xi_t) \in \L^2(\II, \H)$ such that $\mu: x \mapsto \int_0^1 \< \xi_t| x \xi_t\> dt$, and $\int_0^1 \|\xi_t\|^2 \,dt= 1$
\end{lemma}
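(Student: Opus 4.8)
The plan is to realize $\mu$ as a vector \emph{state} for the canonical representation $\pi\colon E \To \B(L^2(\II,\H))$, $(\pi(x)f)(t) = xf(t)$, after which the desired formula is just the unpacking of the $L^2(\II,\H)$ inner product. By \cref{8}, a continuum-weakly continuous functional is precisely a vector functional for $\pi$, so there are $\eta = [\eta_t]$ and $\xi = [\xi_t]$ in $L^2(\II,\H)$ with
$$\mu(x) = \langle \eta \,|\, \pi(x)\xi\rangle = \int_0^1 \langle \eta_t \,|\, x\xi_t\rangle\,dt.$$
Since $\pi$ is faithful, we may transport $\mu$ to a vector functional on $\pi(E)$ and invoke \cref{v20}, choosing $\eta,\xi$ so that $\|\eta\|^2 = \|\mu\| = \|\xi\|^2$. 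As $\mu$ is a state, $\|\mu\| = 1$, whence $\|\eta\| = \|\xi\| = 1$ and $\int_0^1 \|\xi_t\|^2\,dt = \|\xi\|^2 = 1$. The assertion of the lemma is thus \emph{exactly} the claim that one may take $\eta = \xi$.

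The heart of the argument, and the only genuine obstacle, is passing from the two vectors $\eta,\xi$ of a general vector functional to the single vector of a vector state; I would extract this from positivity by testing against an approximate unit. Replacing $\H$ by the essential subspace $\overline{E\H}$ if necessary, we may assume $E$ acts nondegenerately on $\H$ (projecting $\eta_t,\xi_t$ onto $\overline{E\H}$ alters neither the functional nor the norms). Then $\pi$ acts nondegenerately on $L^2(\II,\H)$, by dominated convergence applied to an approximate unit. Let $(e_\lambda)$ be the approximate unit of \cref{v10}. On the one hand $\mu(e_\lambda) \to \|\mu\| = 1$, the standard behaviour of a positive functional along an approximate unit; on the other hand $\pi(e_\lambda)\xi \to \xi$ by nondegeneracy, so $\mu(e_\lambda) = \langle \eta \,|\, \pi(e_\lambda)\xi\rangle \to \langle \eta \,|\, \xi\rangle$. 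Hence $\langle \eta \,|\, \xi\rangle = 1 = \|\eta\|\,\|\xi\|$, and equality in the Cauchy--Schwarz inequality forces $\eta = \xi$.

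With $\eta = \xi$ the proof closes immediately: for the family $(\xi_t) \in \L^2(\II,\H)$ we obtain $\mu(x) = \langle \xi \,|\, \pi(x)\xi\rangle = \int_0^1 \langle \xi_t \,|\, x\xi_t\rangle\,dt$ together with $\int_0^1 \|\xi_t\|^2\,dt = \|\xi\|^2 = 1$, as required. I expect the only points needing care to be the nondegeneracy reduction that guarantees $\pi(e_\lambda)\xi \to \xi$, and the elementary fact that a positive functional attains its norm along an approximate unit; both are routine, and neither requires more than the dependent and almost-everywhere choice available in $\CCC$.
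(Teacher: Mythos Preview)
Your proposal is correct and follows the same approach as the paper: the paper's one-line proof simply records that $\mu$ is a vector functional for the canonical representation of $E$ on $L^2(\II,\H)$, leaving implicit the standard passage from a positive vector functional to a vector state, which you have spelled out via \cref{v20} and the Cauchy--Schwarz argument with an approximate unit. One minor imprecision: projecting $\eta_t,\xi_t$ onto $\overline{E\H}$ can shrink their norms, so the right order is to restrict to $\overline{E\H}$ first and then invoke \cref{v20} there to obtain unit vectors---but this is clearly what you intend.
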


\begin{proof}
The state $\mu$ is a vector functional for the canonical representation of $E$ on $L^2(\II, \H)$.
\end{proof}

\begin{lemma}\label{13}
Let $E \subsetof \B(\H)$ and $F \subsetof \B(\K)$ be V*-algebras. A (bounded) linear function $\pi: E \To F$ is continuum-weakly continuous iff the pullback of every vector state is continuum-weakly continuous.
\end{lemma}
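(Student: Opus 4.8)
My plan is to handle the two directions separately, the forward one being immediate and the reverse one resting on \cref{9} together with a polarization argument. The forward implication needs no work: every vector state on $F$ is ultraweakly continuous, hence continuum-weakly continuous by \cref{4}, so its composition with a continuum-weakly continuous linear map $\pi$ is again continuum-weakly continuous.

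For the converse, I would first reduce the statement to a criterion about functionals. By \cref{3} and \cref{8}, the continuum-weak topology on $E$ coincides with the weak topology $\sigma(E, E_\bullet)$ induced by the space $E_\bullet$ of continuum-weakly continuous functionals, and likewise for $F$. A linear map $\pi\: E \To F$ is therefore continuum-weakly continuous iff $\varphi \circ \pi \in E_\bullet$ for every $\varphi \in F_\bullet$; this is the standard characterization of continuity between weak topologies, and it uses no choice. Thus it suffices to show that the hypothesis, namely that $\omega \circ \pi \in E_\bullet$ for every vector state $\omega$ on $F$, propagates to all of $F_\bullet$.

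The propagation proceeds in two steps. First, by the polarization identity, every vector functional $y \mapsto \< \eta | y \xi\>$ on $F$ is a linear combination of the vector states $y \mapsto \< \eta + i^k \xi | y (\eta + i^k \xi)\>$; since $E_\bullet$ is a linear subspace, the hypothesis yields $(y \mapsto \< \eta | y \xi\>) \circ \pi \in E_\bullet$ for all $\eta, \xi \in \K$. Second, an arbitrary $\varphi \in F_\bullet$ has the form $\varphi(y) = \int_0^1 \< \eta_t | y \xi_t\> \, dt$ with $(\|\eta_t\|^2)$ and $(\|\xi_t\|^2)$ integrable. Writing $\varphi_t(y) = \< \eta_t | y \xi_t\>$ and $\psi_t = \varphi_t \circ \pi$, each $\psi_t$ lies in $E_\bullet$ by the first step, and $\varphi \circ \pi = \int_0^1 \psi_t \, dt$ because $(\varphi \circ \pi)(x) = \int_0^1 \< \eta_t | \pi(x) \xi_t\> \, dt$. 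Applying \cref{9} to the family $(\psi_t)$ then gives $\varphi \circ \pi \in E_\bullet$, which completes the argument.

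The one point that I expect to require care is verifying the integrability hypothesis of \cref{9}, that $(\|\psi_t\| \: t \in \II)$ be integrable. Finiteness of the integral follows from the domination $\|\psi_t\| \leq \|\pi\| \, \|\eta_t\| \, \|\xi_t\| \leq \tfrac{1}{2} \|\pi\| (\|\eta_t\|^2 + \|\xi_t\|^2)$ together with the integrability of $(\|\eta_t\|^2)$ and $(\|\xi_t\|^2)$. Measurability of $t \mapsto \|\psi_t\|$ is automatic in the Chang model, where $\mathbf{LM}$ makes every real-valued function on $\II$ Lebesgue measurable; this is exactly the kind of obstruction that would demand scrutiny under $\mathbf{AC}$ but dissolves in our setting. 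The same observation ensures that $t \mapsto \psi_t(x)$ is integrable for each fixed $x$, so that the defining integral of $\varphi \circ \pi$ is meaningful.
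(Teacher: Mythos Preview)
Your proof is correct and follows essentially the same route as the paper: reduce continuum-weak continuity of $\pi$ to the condition $\varphi\circ\pi\in E_\bullet$ for all $\varphi\in F_\bullet$, then invoke \cref{9} after decomposing $\varphi$ into an integral of elementary functionals whose pullbacks are continuum-weakly continuous by hypothesis. The only cosmetic difference is that the paper passes through \cref{11} and \cref{12} (Jordan decomposition into continuum-weakly continuous states, each an integral of vector states), whereas you polarize first to reach vector functionals and then integrate those directly via \cref{3}; both decompositions feed \cref{9} in the same way.
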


\begin{proof}
The forward direction is trivial. Therefore, it remains to show that if the pullback of every vector state is continuum-weakly continuous, then the pullback of every continuum-weakly continuous functional is continuum-weakly continuous. Each such functional is a linear combination of continuum-weakly continuous states, and each such state is an integral of vector states, so \cref{9} is sufficient to establish the claim.
\end{proof}

\begin{lemma}\label{13.5}
Let $M\subsetof \B(\H)$ and $\N \subsetof \B(\K)$ be von Neumann algebras. An ultraweakly continuous linear function $\pi\: M \To N$ is continuum-weakly continuous.
\end{lemma}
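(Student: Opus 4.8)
The plan is to reduce continuum-weak continuity of $\pi$ to a statement about vector states via \cref{13}, and then exploit that vector states are already ultraweakly continuous. Since every von Neumann algebra is a V*-algebra (\cref{7}), the hypotheses of \cref{13} are satisfied, so $\pi$ is continuum-weakly continuous as soon as we show that the pullback $\omega \circ \pi$ of every vector state $\omega$ on $N$ is continuum-weakly continuous on $M$.

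Next I would observe that each vector state $\omega = \langle \zeta | (\cdot)\, \zeta\rangle$, for $\zeta \in \K$, is ultraweakly continuous on $N$, being a single vector functional. Because $\pi$ is assumed ultraweakly continuous, its dual map carries the predual $N_*$ into $M_*$; hence $\omega \circ \pi$ is an ultraweakly continuous functional on $M$.

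Finally I would invoke \cref{4}: the continuum-weak topology is stronger than the ultraweak topology, so every ultraweakly continuous functional is a fortiori continuum-weakly continuous. In particular $\omega \circ \pi$ is continuum-weakly continuous, which is exactly the criterion demanded by \cref{13}, completing the argument.

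There is no serious obstacle here; the content lies entirely in choosing the right reduction. The point worth emphasizing is why \cref{13} is needed at all: a general continuum-weakly continuous functional $\varphi$ on $N$ need not be ultraweakly continuous, so $\varphi \circ \pi$ cannot be handled by the ultraweak continuity of $\pi$ directly. Restricting attention to vector states, which \emph{are} ultraweakly continuous, is precisely what lets the ultraweak continuity of $\pi$ do the work, after which \cref{4} bridges back to the continuum-weak topology.
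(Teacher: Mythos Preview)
Your proof is correct and follows precisely the route the paper intends: the paper's own proof consists of the single line ``This is a corollary of the preceding lemma,'' and you have simply unpacked that corollary using \cref{13} together with \cref{4}, exactly as required.
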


\begin{proof}
This is a corollary of the preceding lemma.
\end{proof}

\begin{proposition}\label{14}
Let $E\subsetof \B(\H)$ be a V*-algebra, and let $\rho:E \To \B(L^2(\II,\H))$ be its canonical representation on $L^2(\II, \H)$. Then, $\rho$ is a continuum-weakly homeomorphic $*$-isomorphism of $E$ onto $\rho(E)$, which is itself a V*-algebra. 
\end{proposition}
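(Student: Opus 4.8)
The plan is to verify three things in turn: that $\rho$ is an isometric $*$-isomorphism onto $\rho(E)$, that $\rho$ is a homeomorphism for the continuum-weak topologies, and that $\rho(E)$ is continuum-weakly closed. The first is routine. The map $\rho$, given by $(\rho(x)f)(t) = x\,f(t)$, is manifestly a $*$-homomorphism, and it is isometric: the constant family $\xi_t = \xi$ for a unit vector $\xi \in \H$ gives $\|\rho(x)\| \geq \|x\xi\|$, while $\|\rho(x)\| \leq \|x\|$ is immediate from the definition. Hence $\rho$ is injective, and $\rho(E)$ is a concrete C*-algebra $*$-isomorphic to $E$.

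For the homeomorphism I would treat the two directions separately. Forward continuity follows from \cref{13}: since $\B(L^2(\II,\H))$ is a von Neumann algebra, hence a V*-algebra, it suffices to check that the pullback of each vector state is continuum-weakly continuous, and the pullback of $y \mapsto \langle F | y F\rangle$ along $\rho$ is exactly $x \mapsto \int_0^1 \langle F(t) | x F(t)\rangle\,dt$, a functional of the form (I). For the inverse I would use \cref{8}: every continuum-weakly continuous functional on $E$ has the form $x \mapsto \langle \eta | \rho(x)\,\xi\rangle$ for vectors $\eta = [\eta_t]$ and $\xi = [\xi_t]$ in $L^2(\II,\H)$, so it factors through $\rho\inv$ as the restriction to $\rho(E)$ of the vector functional $y \mapsto \langle \eta | y\,\xi\rangle$ on $\B(L^2(\II,\H))$, which is continuum-weakly continuous. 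Since the continuum-weak topology on $E$ is the initial topology for its continuum-weakly continuous functionals, this shows $\rho\inv$ is continuum-weakly continuous, so $\rho$ is a homeomorphism onto its image.

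To show $\rho(E)$ is a V*-algebra, I would observe that the argument just given applies verbatim to $\rho\: \B(\H) \To \rho(\B(\H))$, making it a continuum-weak homeomorphism as well. Granting that $\rho(\B(\H))$ is continuum-weakly closed in $\B(L^2(\II,\H))$, the homeomorphic image $\rho(E)$ of the continuum-weakly closed set $E$ is continuum-weakly closed in $\rho(\B(\H))$; and a closed subset of a closed subspace is closed, so $\rho(E)$ is continuum-weakly closed in $\B(L^2(\II,\H))$, as desired. Thus everything reduces to showing that $\rho(\B(\H))$ is continuum-weakly closed, for which it suffices, by \cref{7} and \cref{4}, to show that $\rho(\B(\H))$ is a von Neumann algebra.

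This last reduction is the main obstacle, and the plan is to prove it by showing that $\rho$ is moreover \emph{ultraweakly} continuous. Ultraweak continuity amounts to exhibiting a preadjoint: the pullback along $\rho$ of the ultraweakly continuous functional $y \mapsto \mathrm{Tr}(cy)$, for $c$ trace class on $L^2(\II,\H)$, is $x \mapsto \mathrm{Tr}(dx)$, where $d = \int_0^1 |\,\cdot\,\rangle\langle\,\cdot\,|$ is the partial trace of $c$ over the $\II$-variable, again trace class with $\|d\|_1 \leq \|c\|_1$ (computed termwise from a singular-value decomposition of $c$, using Tonelli, \ref{v3.5}, to interchange the sum and the integral). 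Being ultraweakly continuous and injective, $\rho$ then carries the ultraweakly compact unit ball of $\B(\H)$ onto the unit ball of $\rho(\B(\H))$, which is therefore ultraweakly compact, hence ultraweakly closed; the Krein--Smulian theorem then forces $\rho(\B(\H))$ itself to be ultraweakly closed, i.e.\ a von Neumann algebra. The delicate points to watch are that partial traces of trace-class operators remain trace class and that the Krein--Smulian argument survives in the Chang model, both of which I expect to follow by absoluteness from their ordinary statements.
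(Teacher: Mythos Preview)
Your argument for the homeomorphism is sound and essentially matches the paper's: $\rho$ is continuum-weakly continuous because pullbacks of vector states are of the form (I), and $\rho^{-1}$ is continuum-weakly continuous because every continuum-weakly continuous functional on $E$ is a vector functional in the representation $\rho$ (\cref{8}). Your reduction of the closedness question to the case $E=\B(\H)$ is also correct and is exactly what the paper does.

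The gap is in your proof that $\rho(\B(\H))$ is ultraweakly closed. You rely on the ultraweak \emph{compactness} of the unit ball of $\B(\H)$, i.e.\ Banach--Alaoglu for the dual pair $(\B^1(\H),\B(\H))$. In the Chang model this is not available in general: Banach--Alaoglu is equivalent to the Boolean prime ideal theorem, and the paper records that all ultrafilters on $\NN$ are principal, so that principle fails. The paper's absoluteness list only verifies weak* compactness of dual unit balls for \emph{separable} preduals (\ref{v8}), and $\B^1(\H)$ need not be separable. Your hope that ``the Krein--Smulian argument survives by absoluteness'' therefore does not go through; compactness is precisely the kind of statement that absoluteness does not transfer for large spaces.

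The paper sidesteps compactness entirely. Fixing a unit vector $f\in L^2(\II)$, the isometry $u_f\colon \H\to L^2(\II,\H)$, $\xi\mapsto[f(t)\xi]$, yields a \emph{globally defined} map $\pi\colon \B(L^2(\II,\H))\to\B(\H)$, $y\mapsto u_f^*yu_f$, which is continuum-weakly continuous (the pullback of a vector functional is a vector functional) and satisfies $\pi\circ\rho=\mathrm{id}_{\B(\H)}$. Then
\[
\rho(\B(\H))=\{\,y\in\B(L^2(\II,\H)) : (\rho\circ\pi)(y)=y\,\},
\]
the equalizer of two continuum-weakly continuous maps into a Hausdorff space, hence continuum-weakly closed. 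This uses nothing beyond the Hausdorffness of the continuum-weak topology. You can repair your argument by replacing the compactness/Krein--Smulian step with this retraction observation; note that $\pi$ does double duty, since its restriction to $\rho(\B(\H))$ is exactly the inverse you already constructed.
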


\begin{proof}
Without loss of generality, assume that $E = \B(\H)$. Fix $f \in L^2(\II)$, and let $u_f: \H \To L^2(\II, \H)$ be the isometry defined by $u_f\xi = [f(t) \xi\: t \in \II]$. Thus, $\pi: x \mapsto u_f^* x u_f$ is a continuum-weakly continuous map $\B(\II, \H) \To \B(\H)$ such that $\pi \circ \rho$ is the identity on $\B(\H)$. The canonical representation $\rho$ is itself continuum-weakly continuous because the pullback of every vector functional is trivially continuum-weakly continuous. The rest of the proof is elementary general topology.
\end{proof}

\section{Projections in a V*-algebra}

\begin{lemma}\label{15}
Let $\H$ be a Hilbert space. If $(x_n)$ is a sequence in $\B(\H)$ that converges to $x$ in the ultraweak topology, then it converges to $x$ in the continuum-weak topology.
\end{lemma}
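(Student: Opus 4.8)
The plan is to test continuum-weak convergence against an arbitrary defining functional and reduce the problem to the dominated convergence theorem. Since the continuum-weak topology is strictly stronger than the ultraweak topology (\cref{4}), the implication cannot hold for arbitrary nets; the essential input is that a \emph{sequence} is countable, which yields both a uniform operator bound and the applicability of the dominated convergence theorem.

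First I would establish that the sequence is uniformly bounded. For each fixed $\xi \in \H$, the family $(x_n \xi)$ is weakly bounded, because for every $\eta \in \H$ the vector functional $y \mapsto \langle \eta | y \xi\rangle$ is ultraweakly continuous, so $\langle \eta | x_n \xi\rangle \to \langle \eta | x \xi\rangle$ and hence $(\langle \eta | x_n \xi\rangle)_n$ is bounded. By the uniform boundedness principle (\cref{v6}), $(x_n \xi)$ is then norm bounded for every $\xi$. A second application of \cref{v6}, now to the operators $(x_n)$ themselves, gives $M := \sup_n \|x_n\| < \infty$.

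Next, fix a continuum-weakly continuous functional $\varphi\: y \mapsto \int_0^1 \langle \eta_t | y \xi_t\rangle\, dt$, with $(\|\eta_t\|^2)$ and $(\|\xi_t\|^2)$ integrable, as in \cref{2}. Put $g_n(t) = \langle \eta_t | x_n \xi_t\rangle$ and $g(t) = \langle \eta_t | x \xi_t\rangle$; these functions are measurable, since in our model every complex-valued function on $\II$ is measurable. For each fixed $t$ the functional $y \mapsto \langle \eta_t | y \xi_t\rangle$ is ultraweakly continuous, so $g_n(t) \to g(t)$ pointwise. Moreover $|g_n(t)| \le \|x_n\|\,\|\eta_t\|\,\|\xi_t\| \le M\,\|\eta_t\|\,\|\xi_t\|$, and Cauchy--Schwarz gives $\int_0^1 \|\eta_t\|\,\|\xi_t\|\, dt \le \big(\int_0^1 \|\eta_t\|^2\, dt\big)^{1/2}\big(\int_0^1 \|\xi_t\|^2\, dt\big)^{1/2} < \infty$, so $t \mapsto M\,\|\eta_t\|\,\|\xi_t\|$ is an integrable dominating function independent of $n$.

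Finally, the dominated convergence theorem yields $\varphi(x_n) = \int_0^1 g_n\, dt \to \int_0^1 g\, dt = \varphi(x)$. Since $\varphi$ was an arbitrary functional of the form (I), the sequence $(x_n)$ converges to $x$ continuum-weakly. The one step requiring care is the domination: it is precisely here that the uniform bound $M$, available because the index set is countable, is indispensable, and it is the reason the statement is restricted to sequences rather than nets.
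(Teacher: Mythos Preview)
Your proof is correct and follows essentially the same approach as the paper's: obtain a uniform bound $M = \sup_n \|x_n\| < \infty$ and then apply the dominated convergence theorem with dominating function $M\,\|\eta_t\|\,\|\xi_t\|$. The only cosmetic difference is that the paper gets the uniform bound in one stroke, invoking the uniform boundedness principle via the duality $\B(\H) \cong (\B^1(\H))^*$, whereas you reach it by a two-step application of \cref{v6}; both arguments are valid and the rest is identical.
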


\begin{proof}
The set $\{\|x_n\|\}$ is bounded by some positive real number $C$, by an application of the uniform boundedness principle, since $\B(\H)$ is isometrically isomorphic to the dual of $\B^1(\H)$, the Banach space of trace class operators on $\H$. It follows that for all families $(\xi_t \in \H\: t \in \II)$ and $(\eta_t \in \H\:t \in \II)$ in $\L^2(\II, \H)$,
$$ \int \langle \eta_t |x_n \xi_t \rangle \, dt \To \int \langle \eta_t | x \xi_t \rangle \, dt,$$
by an application of the dominated convergence theorem, since $|\langle \eta_t| x_n \xi_t \rangle| \leq C \cdot \|\xi_t\| \cdot \| \eta_t\|$.
\end{proof}

\begin{lemma}\label{31}
Let $E$ and $F$ be V*-algebras, and let $\psi: E \To F$ be a continuum-weakly continuous positive map. Then, $\psi$ is sequentially normal, in the sense that if $(x_n \in E\: n \in \NN)$ is a descending sequence of positive operators in $E$ converging ultraweakly to $0$, then $(\psi(x_n))$ is a descending sequence of positive operators in $F$ converging ultraweakly to $0$.
\end{lemma}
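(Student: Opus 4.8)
The plan is to reduce the statement entirely to comparisons between the two weak topologies, with \cref{15} serving as the one piece of genuine analytic input. First I would dispense with the easy structural half. Since $\psi$ is linear and positive, and $(x_n)$ is descending and positive, the differences $x_n - x_{n+1} \geq 0$ and the terms $x_n \geq 0$ map to $\psi(x_n) - \psi(x_{n+1}) = \psi(x_n - x_{n+1}) \geq 0$ and $\psi(x_n) \geq 0$; hence $(\psi(x_n))$ is automatically a descending sequence of positive operators in $F$. So the only real content is the ultraweak convergence $\psi(x_n) \To 0$.

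For that, I would chain three facts. By hypothesis $x_n \To 0$ ultraweakly, and \cref{15} upgrades this to continuum-weak convergence $x_n \To 0$. Because $\psi$ is continuum-weakly continuous, and topological continuity entails sequential continuity, it follows that $\psi(x_n) \To \psi(0) = 0$ in the continuum-weak topology of $F$. Finally, by \cref{4} the continuum-weak topology is stronger than the ultraweak topology, so the identity on $F$ is continuous from the continuum-weak to the ultraweak topology, whence $\psi(x_n) \To 0$ ultraweakly as well.

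If I wished to avoid invoking sequential continuity abstractly, I would phrase the same argument through functionals. Fix an ultraweakly continuous functional $\varphi$ on $F$; since the continuum-weak topology refines the ultraweak one, $\varphi$ is also continuum-weakly continuous, and by the pullback characterization in \cref{13} the composite $\varphi \circ \psi$ is a continuum-weakly continuous functional on $E$. Evaluating against the continuum-weakly null sequence $(x_n)$ gives $\varphi(\psi(x_n)) = (\varphi \circ \psi)(x_n) \To 0$, and as $\varphi$ ranges over all ultraweakly continuous functionals, this yields $\psi(x_n) \To 0$ ultraweakly.

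The only step carrying genuine analytic weight is the appeal to \cref{15}, whose proof rests on the uniform boundedness principle to bound $\{\|x_n\|\}$ and on dominated convergence to pass the limit inside the defining integral of a continuum-weak functional. Everything else is formal bookkeeping with the topology comparison of \cref{4} and the definition of continuum-weak continuity, so I expect the main thing to watch is simply applying each comparison in the correct direction rather than any substantive obstacle.
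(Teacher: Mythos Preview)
Your proposal is correct and follows essentially the same route as the paper's proof: invoke \cref{15} to pass from ultraweak to continuum-weak convergence of $(x_n)$, apply the continuum-weak continuity of $\psi$, and then drop back to ultraweak convergence via \cref{4}. The paper's version is terser---it omits the verification that $(\psi(x_n))$ is descending and positive and does not spell out the alternative via functionals---but the substance is identical.
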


\begin{proof}
The various modes of convergence coincide for monotone sequences of positive operators. In particular, if the greatest lower bound of $(x_n)$ is $0$, then $x_n \To 0$ ultraweakly, and therefore continuum-weakly, by \cref{15}. By assumption, it follows that $\psi(x_n) \To \psi(0) =0$ continuum-weakly, and therefore ultraweakly.
\end{proof}

\begin{lemma}\label{16}
Let $E \subsetof \B(\H)$ be a V*-algebra. If $x \in E$ is positive, then its support projection $[x]$ is also in $E$.
\end{lemma}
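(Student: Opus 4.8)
The plan is to realize $[x]$ as a continuum-weak limit of operators drawn from the continuous functional calculus of $x$, and then to use that $E$ is continuum-weakly closed. For each natural number $n$ I would take the continuous function $f_n\colon [0,\infty) \To [0,1]$ given by $f_n(t) = \min(nt,1)$, noting that $f_n(0) = 0$. Since $x \in E$ is positive and $E$ is a C*-algebra, the continuous functional calculus (\ref{v12}) gives $f_n(x) \in C^*(x) \subsetof E$; the vanishing $f_n(0) = 0$ guarantees membership even when $E$ is nonunital, so that $f_n(x)$ lies in the C*-subalgebra generated by $x$ alone.

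Next I would identify the limit of the sequence $(f_n(x))$. The functions $f_n$ increase pointwise to the indicator function $\mathbf 1_{(0,\infty)}$, so $(f_n(x))$ is an increasing, norm-bounded sequence of positive contractions. Such a sequence converges ultrastrongly, and hence ultraweakly, to its supremum, and by the spectral theorem this supremum is the spectral projection of $x$ associated to $(0,\infty)$, i.e. the projection onto $\overline{x\H} = (\ker x)^\perp$. This is exactly the support projection $[x]$.

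The hard part will be upgrading this convergence from ultraweak to continuum-weak: this is not automatic, since the continuum-weak topology is strictly stronger than the ultraweak one (\cref{4}), and merely knowing $f_n(x) \To [x]$ ultraweakly would not locate $[x]$ in the continuum-weak closure of $E$. The resolution, and the reason a \emph{sequence} of functional-calculus approximants is the right device, is \cref{15}: an ultraweakly convergent sequence in $\B(\H)$ converges continuum-weakly to the same limit. Invoking it yields $f_n(x) \To [x]$ in the continuum-weak topology.

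Finally, because each $f_n(x)$ lies in $E$ and $E$ is continuum-weakly closed by the definition of a V*-algebra (\cref{6}), the limit $[x]$ lies in $E$, as desired. This is the V*-algebraic analogue of the statement \ref{v25} for algebras closed under ascending sequences, and indeed the argument runs parallel. I expect no step beyond the passage through \cref{15} to cause difficulty; the spectral theorem, the continuous functional calculus, and the relevant monotone-convergence facts are all available in the present setting, and no choice principle stronger than what has already been assumed is required.
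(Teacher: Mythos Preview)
Your proof is correct and follows essentially the same approach as the paper: the paper's proof is the one-line observation $[x] = \lim^{uw}_{n\to\infty} x^{1/n}$, which implicitly invokes \cref{15} and the continuum-weak closedness of $E$ exactly as you do. The only difference is the choice of approximating sequence ($x^{1/n}$ versus $\min(nx,1)$), which is immaterial.
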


\begin{proof}
$$[x] = \lim^{uw}_{n \To \infty} x^{\frac 1 n}$$
\end{proof}

\begin{proposition}\label{17}
The projections of $E$ are closed under countable meets and joins.
\end{proposition}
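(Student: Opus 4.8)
The plan is to treat joins and meets separately, since a V*-algebra need not be unital and so we cannot pass between the two via the complementation $p \mapsto 1 - p$. In both cases the engine is \cref{16} (support projections stay in $E$) together with \cref{15} (ultraweak sequential limits are continuum-weak limits) and the continuum-weak closedness of $E$.

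For a countable family of projections $(p_n)$ in $E$, I would dispatch the join in one stroke. The series $x = \sum_n 2^{-n} p_n$ converges in norm to a positive element of $E$, and since $\langle x \xi | \xi\rangle = \sum_n 2^{-n}\|p_n \xi\|^2$, one reads off $\ker x = \bigcap_n \ker p_n$. Hence the support projection $[x]$ projects onto $(\ker x)^\perp = \overline{\sum_n \mathrm{ran}\, p_n}$, which is exactly the range of $\bigvee_n p_n$. By \cref{16}, $[x] = \bigvee_n p_n \in E$, so $E$ is closed under countable joins.

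For the meet, I would first show that $E$ contains the meet of any two of its projections $p, q$. The operator $pqp$ is a positive contraction, so $(pqp)^k$ is a monotone decreasing sequence of positive operators; its greatest lower bound is the projection onto $\mathrm{ran}(p) \cap \mathrm{ran}(q)$, i.e. $p \wedge q$. A monotone decreasing sequence of positive operators converges ultraweakly to its infimum, hence continuum-weakly by \cref{15}, and as $E$ is continuum-weakly closed we conclude $p \wedge q \in E$. Iterating this two-at-a-time, the partial meets $q_m = p_0 \wedge \cdots \wedge p_m$ all lie in $E$; they form a decreasing sequence of projections whose infimum is $\bigwedge_n p_n$, so one final application of the monotone-limit argument and the closedness of $E$ places the countable meet in $E$.

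The main obstacle is the two-projection meet, specifically verifying that $(pqp)^k$ genuinely decreases to $p \wedge q$. Monotonicity follows from $0 \le pqp \le p \le 1$ together with the inequality $a^{k+1} \le a^k$ valid for any positive contraction $a$ (since $a^k$ and $1-a$ are commuting positive operators). Identifying the limit requires analyzing the fixed-point space $\{\xi : pqp\,\xi = \xi\}$: a Cauchy--Schwarz argument shows $pqp\,\xi = \xi$ with $\|\xi\|=1$ forces $p\xi = \xi$ and $q\xi = \xi$, so the fixed-point space is $\mathrm{ran}(p)\cap\mathrm{ran}(q)$, as needed. Everything else is a routine assembly of \cref{15}, \cref{16}, and the ultrastrong convergence of monotone nets of positive operators.
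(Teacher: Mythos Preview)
Your proof is correct and rests on the same two tools as the paper's: \cref{16} for joins via support projections, and an iterated-product limit together with \cref{15} for meets. The only differences are cosmetic. For joins you go straight to the countable case via $\bigl[\sum_n 2^{-n}p_n\bigr]$, whereas the paper first does the binary case $p\vee q=[p+q]$ and then passes to countable joins through ascending sequences; your route is marginally more direct. For the binary meet you use $(pqp)^k$ rather than the paper's $(pq)^n$; your variant has the small advantage that the sequence is visibly a decreasing sequence of positive operators, so the convergence to $p\wedge q$ needs no separate justification beyond the fixed-point analysis you sketch. Both arguments then obtain countable meets from binary meets via descending sequences and \cref{15}.
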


\begin{proof}
The projections of $E$ are closed under binary meets because
$$p \wedge q = \lim_{n \To \infty}^{uw} (pq)^n.$$ They are closed under binary joins because $p \vee q = [p+q]$. It follows that the projections of $E$ are closed under countably infinite meets and joins because $E$ is closed under limits of ascending and descending sequences.
\end{proof}

\begin{lemma}\label{18}
The projections of $E$ are an approximate unit for $E$. 
\end{lemma}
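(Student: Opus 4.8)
The plan is to prove something stronger than the limiting statement: for each $a \in E$ I will exhibit a single projection of $E$ that acts as a two-sided unit on $a$ \emph{exactly}, and then pass to the net. First I would note that the projections of $E$ form an upward-directed set, since for projections $p,q \in E$ the join $p \vee q$ is again a projection of $E$ by \cref{17} and dominates both $p$ and $q$. Thus the projections of $E$, ordered by $\leq$, form a net of positive contractions, and only the convergence $pa \To a$, $ap \To a$ for each $a \in E$ remains to be verified.

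Fix $a \in E$ and consider $a^*a, aa^* \in E$, both positive. By \cref{16} their support projections $[a^*a]$ and $[aa^*]$ belong to $E$, and hence so does $p_a = [a^*a] \vee [aa^*]$ by \cref{17}. The key computation is to read off these support projections from the kernel and range of $a$. Since $\ker(a^*a) = \ker a$, the complement $1 - [a^*a]$ is the projection onto $\ker a$, so $a(1-[a^*a]) = 0$, that is, $a[a^*a] = a$; dually, $\overline{\operatorname{ran}(aa^*)} = \overline{\operatorname{ran} a}$, so $[aa^*]$ restricts to the identity on $\overline{\operatorname{ran} a}$ and $[aa^*]a = a$.

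From $p_a \geq [a^*a]$ one gets $1 - p_a \leq 1 - [a^*a]$, whence $\operatorname{ran}(1-p_a) \subseteq \ker a$ and $a p_a = a$; from $p_a \geq [aa^*]$ one gets that $p_a$ is the identity on $\overline{\operatorname{ran} a}$, so $p_a a = a$. Exactly the same two inequalities apply to any projection $p$ of $E$ with $p \geq p_a$, giving $pa = ap = a$. Therefore, along the directed net of projections of $E$, the quantities $\|pa - a\|$ and $\|ap - a\|$ vanish identically once $p \geq p_a$, which is far more than enough to establish that the projections are an approximate unit.

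I do not expect a serious obstacle here. The structural input, namely that $E$ contains the support projections needed for the argument to run at all, is precisely what \cref{16} and \cref{17} provide, and this is exactly what separates V*-algebras from general C*-algebras, where one must instead approximate with the positive contractions of \cref{v10}. The only point requiring care is the bookkeeping identifying $[a^*a]$ and $[aa^*]$ with the kernel and closed range of $a$, together with the monotonicity step $p \geq p_a \Rightarrow pa = ap = a$; both are routine Hilbert-space facts, so the proof should be short.
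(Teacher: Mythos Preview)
Your proof is correct, and in fact more direct than the paper's. The paper does not construct a single projection acting as an exact unit on a given $a$; instead it argues that since the positive elements of norm strictly less than $1$ form an approximate unit for any C*-algebra (\ref{v10}), the subset of elements of the form $r = \alpha p$ with $p$ a projection in $E$ and $\alpha \in (0,1)$ is cofinal among them (because $e \leq \|e\|\,[e] \leq \alpha\,[e]$ for $\|e\| < \alpha < 1$, using \cref{16}), and hence is already an approximate unit; passing from $\alpha p$ to $p$ is then the ``straightforward'' final step.

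Your route bypasses this cofinality detour entirely: you go straight to the support projections $[a^*a]$ and $[aa^*]$ via \cref{16}, take their join via \cref{17}, and obtain a projection $p_a$ with $p_a a = a p_a = a$ exactly. This gives the stronger statement that the net of projections is eventually constant (equal to $a$) on each element, not merely convergent. The paper's approach has the minor expository virtue of making explicit that the result refines the general C*-algebra approximate unit, but your argument is shorter and yields more.
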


\begin{proof}
Recall that for any C*-algebra, its positive elements of norm strictly less than $1$ form an approximate unit. It follows that in $E$, the positive elements $r$ satisfying $r^2 = \alpha r$ for $\alpha \in (0,1)$ form an approximate unit. It is then straightforward to show that the projections themselves form an approximate unit.  
\end{proof}

\begin{lemma}\label{19}
If $E$ is nondegenerate, then for all $\xi \in \H$, there is a projection $p \in E$ such that $p \xi = \xi$.
\end{lemma}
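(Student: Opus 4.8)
The plan is to realize $p$ as a countable join of projections in $E$ that increasingly capture $\xi$, exploiting that the projections of $E$ are closed under countable joins by \cref{17}.

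First I would invoke \cref{18}, which supplies an approximate unit for $E$ consisting of projections. Since $E$ is nondegenerate, $E\H$ is norm dense in $\H$, and the standard argument then shows that this net of projections converges strongly to the identity on all of $\H$: given $\xi \in \H$ and $\epsilon > 0$, I would choose $a\eta \in E\H$ with $\|\xi - a\eta\|$ small, and then a projection $q \in E$ from the approximate unit with $\|qa - a\|$ small; combining these via $\|q\xi - \xi\| \leq \|q(\xi - a\eta)\| + \|qa\eta - a\eta\| + \|a\eta - \xi\|$ makes $\|q\xi - \xi\|$ as small as desired. In particular, for each $n$ the set of projections $q \in E$ with $\|\xi - q\xi\| < 1/n$ is nonempty. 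Using countable choice, which is available since $\mathbf{DC}$ holds, I would select a sequence $(q_n)$ of projections in $E$ with $\|\xi - q_n\xi\| < 1/n$, so that $q_n\xi \To \xi$ in norm.

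Next I would set $p = \bigvee_n q_n$, which lies in $E$ by \cref{17}. The crucial point is that the range of this join is the closed linear span of the ranges of the $q_n$. This follows from the description of joins used in the proof of \cref{17}: a binary join is a support projection, $q \vee q' = [q+q']$, and the support projection of the positive operator $q + q'$ is the orthogonal projection onto $(\ker(q+q'))^\perp = (\ker q \cap \ker q')^\perp = \overline{\mathrm{range}(q) + \mathrm{range}(q')}$; the countable join is the limit of the ascending sequence of finite joins, whose range is accordingly $\overline{\sum_n \mathrm{range}(q_n)}$. Since each $q_n\xi$ lies in $\mathrm{range}(q_n) \subseteq \mathrm{range}(p)$ and $\mathrm{range}(p)$ is closed, the limit $\xi = \lim_n q_n\xi$ lies in $\mathrm{range}(p)$ as well; as $p$ is the orthogonal projection onto its range, this gives $p\xi = \xi$.

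The main obstacle is the gap between the \emph{net} convergence furnished by the approximate unit and the requirement of a \emph{single} projection fixing $\xi$ \emph{exactly}; a lone approximate-unit projection will in general only satisfy $p\xi \approx \xi$. The countable join closes this gap, but it is precisely here that the choiceless setting demands care: the passage from the net to a countable approximating sequence $(q_n)$ rests on countable choice, and the join must be formed inside $E$, which is legitimate only because \cref{17} guarantees that $E$ is closed under countable joins. No choice principle stronger than $\mathbf{DC}$ is required.
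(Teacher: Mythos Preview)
Your proof is correct and follows essentially the same strategy as the paper's: use \cref{18} plus nondegeneracy to see that projections in $E$ approximate $\xi$ arbitrarily well, extract a countable sequence, and combine into a single projection fixing $\xi$. The only difference is in the combination step: the paper invokes $\mathbf{DC}$ to arrange from the outset that the sequence $(p_n)$ is \emph{increasing}, and then takes its ultraweak limit directly, whereas you use only countable choice to obtain an arbitrary sequence $(q_n)$ and then appeal to \cref{17} for the countable join. Since the countable join in \cref{17} is itself computed as the monotone limit of finite joins, the two arguments unwind to the same thing; your packaging is slightly more modular and, as you note, requires only $\mathbf{AC}_\omega$ rather than full $\mathbf{DC}$.
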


\begin{proof}
By \cref{18} above, the projections of $E$ converge to the identity on $\H$ in the strong operator topology, and in particular $\lim_{p \nearrow 1} \|\xi   - p \xi\| = 0$. Using the axiom of dependent choices, we can obtain an increasing sequence $(p_n \: n \in \NN)$ of projections in $E$ such that $\lim_{n \To \infty} \|\xi- p_n \xi\| = 0 $. The ascending sequence $(p_n)$ converges ultraweakly to some projection $p$, which is therefore in $E$. Therefore, $\lim_{n \To \infty} \|p\xi  -  p_n\xi\| = 0$. We conclude that that $p \xi = \xi$.
\end{proof}

\begin{proposition}\label{20}
Let $E$ be a V*-algebra, and let $\varphi: E \To \CC$ be a continuum-weakly continuous functional. There exist projections $p, q \in E$ such that $\varphi(p x q) = \varphi(x)$ for all $x \in E$.
\end{proposition}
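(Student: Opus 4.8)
The plan is to realize $\varphi$ as a single vector functional for the canonical representation and then to obtain $p$ and $q$ as projections fixing the two vectors that occur. By \cref{8}, a continuum-weakly continuous functional on $E$ is exactly a vector functional for the canonical representation $\rho\: E \To \B(L^2(\II,\H))$, so there exist $\omega, \zeta \in L^2(\II,\H)$ with $\varphi(x) = \langle \omega | \rho(x)\zeta\rangle$ for all $x \in E$. By \cref{14}, $\rho$ is a continuum-weakly homeomorphic $*$-isomorphism of $E$ onto the V*-algebra $\rho(E)$, so it suffices to produce projections $Q, P_0 \in \rho(E)$ with $Q\zeta = \zeta$ and $P_0\omega = \omega$; setting $p := \rho\inv(P_0)$ and $q := \rho\inv(Q)$ then yields projections in $E$, and since $\rho(pxq) = P_0\,\rho(x)\,Q$ a direct computation gives
\[ \varphi(pxq) = \langle \omega | P_0\,\rho(x)\,Q\,\zeta\rangle = \langle P_0\omega | \rho(x)\,\zeta\rangle = \langle \omega | \rho(x)\,\zeta\rangle = \varphi(x), \]
using only that $Q\zeta = \zeta$, that $P_0$ is self-adjoint, and that $P_0\omega = \omega$.

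To produce $Q$ and $P_0$ I would invoke \cref{19}, which supplies a projection fixing any prescribed vector in a \emph{nondegenerate} V*-algebra. The only point requiring care is that $\rho(E)$ need not act nondegenerately on $L^2(\II,\H)$, since $E$ is not assumed nondegenerate. Let $P$ be the projection of $L^2(\II,\H)$ onto the essential subspace $\overline{\rho(E)\,L^2(\II,\H)}$. As this subspace is $\rho(E)$-invariant and $\rho(E)$ is self-adjoint, each $\rho(x)$ satisfies $\rho(x) = P\rho(x) = \rho(x)P$; hence replacing $\omega,\zeta$ by $P\omega,P\zeta$ leaves $\varphi$ unchanged, and we may assume $\omega$ and $\zeta$ lie in the range of $P$. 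By \cref{18} the projections of $\rho(E)$ form an approximate unit, and they converge strongly to $P$, so the argument of \cref{19} applies verbatim to any vector in the range of $P$ and produces the desired $Q$ and $P_0$.

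The main obstacle is precisely this nondegeneracy bookkeeping: one must confirm that compressing to the essential subspace neither alters the value of $\varphi$ nor the available continuum-weak structure, so that \cref{19} is genuinely applicable. Once this is settled, the displayed computation closes the argument immediately. I note that two projections are genuinely required: because $\varphi$ is an arbitrary functional, its ``bra'' vector $\omega$ and ``ket'' vector $\zeta$ differ in general, whereas a single support projection would suffice only in the special case of a positive functional, where one may take $\omega = \zeta$.
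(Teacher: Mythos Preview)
Your proof is correct and follows essentially the same route as the paper: realize $\varphi$ as a vector functional via the canonical representation on $L^2(\II,\H)$ (\cref{8}, \cref{14}), then apply \cref{19} to obtain projections fixing the two vectors. In fact you are more careful than the paper's two-line argument, which invokes \cref{19} without addressing the nondegeneracy hypothesis; your reduction to the essential subspace fills that small gap cleanly.
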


\begin{proof}
By \cref{14}, we may assume that $\varphi$ is a vector functional, so the existence of $p$ and $q$ then follow by \cref{19}, above.
\end{proof}

\begin{proposition}\label{21}
Let $X$ be a set. Then
$$F = \{f \in \ell^\infty(X) \: \supp(f) \not \succcurlyeq \II \}$$
is a V*-algebra in its canonical representation on $\ell^2(X)$.
\end{proposition}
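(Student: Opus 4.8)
The plan is to verify the two requirements of \cref{6} separately: first that $F$ is a $*$-subalgebra of $\ell^\infty(X) \subseteq \B(\ell^2(X))$ (norm-closedness will come for free at the end), and then that $F$ is closed in the continuum-weak topology. The combinatorial engine behind everything is the observation, from $\mathbf{PSP}$, that a subset $S \subseteq \II$ admits no injection of $\II$ iff it is countable: if $S$ is uncountable it contains a perfect set, which contains a Cantor set, so $\II \preccurlyeq S$. From this I would first extract the $\mathbf{ZF}$-level lemma that the condition $\supp(f) \not\succcurlyeq \II$ is preserved under finite unions of supports. Indeed, if $g\: \II \To A \cup B$ were injective with $A, B \not\succcurlyeq \II$, then $g$ would restrict to injections of $g\inv(A)$ into $A$ and of $\II \setminus g\inv(A)$ into $B$, forcing both preimages to be not-$\succcurlyeq\II$ subsets of $\II$, hence countable, hence $\II$ countable, a contradiction. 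Since $\supp(f^*) = \supp(f)$, $\supp(\lambda f) \subsetof \supp(f)$, $\supp(fg) \subsetof \supp(f) \cap \supp(g)$, and $\supp(f + g) \subsetof \supp(f) \cup \supp(g)$, this lemma shows at once that $F$ is a $*$-subalgebra.

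For continuum-weak closedness, let $x$ lie in the continuum-weak closure of $F$. Because $\ell^\infty(X)$ is a von Neumann algebra, hence a V*-algebra by \cref{7}, and $F \subsetof \ell^\infty(X)$, the operator $x$ is itself multiplication by some $g \in \ell^\infty(X)$, and the task reduces to showing $\supp(g) \not\succcurlyeq \II$. I would argue by contradiction: assume an injection $h\: \II \To \supp(g)$, and manufacture a single continuum-weak functional that separates $g$ from $F$. Writing $e_y$ for the standard basis vector at $y \in X$, set $\xi_t = e_{h(t)}$ and $\eta_t = \frac{g(h(t))}{|g(h(t))|}\, e_{h(t)}$; since $\|\xi_t\|^2 = \|\eta_t\|^2 = 1$ is integrable, \cref{2} guarantees that $\varphi\: y \mapsto \int_0^1 \langle \eta_t | y\, \xi_t \rangle \, dt$ is continuum-weakly continuous. (All integrands here are automatically Lebesgue measurable, since $\mathbf{LM}$ makes every function $\II \To \CC$ measurable, so these functionals are legitimate in the sense of \cref{2}.)

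The point is then a clean measure-zero computation. For $a \in F$, the integrand $\langle \eta_t | a\, \xi_t\rangle$ is a scalar multiple of $a(h(t))$, so it vanishes off the set $h\inv(\supp(a))$; this set injects into $\supp(a) \not\succcurlyeq \II$, hence is a not-$\succcurlyeq\II$ subset of $\II$, hence countable by $\mathbf{PSP}$, hence Lebesgue-null. Thus $\varphi(a) = 0$ for every $a \in F$. On the other hand $\langle \eta_t | g\, \xi_t\rangle = |g(h(t))| > 0$ for every $t$, so $\varphi(g) = \int_0^1 |g(h(t))|\, dt > 0$. Since $\varphi$ is continuum-weakly continuous and $g$ is a continuum-weak limit of elements of $F$, evaluating $\varphi$ along a net in $F$ converging to $g$ would force $\varphi(g) = 0$, a contradiction. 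Hence $\supp(g) \not\succcurlyeq \II$ and $x = g \in F$.

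Finally, since the continuum-weak topology is weaker than the norm topology (\cref{4}), a continuum-weakly closed set is norm-closed, so $F$ is a concrete C*-algebra; being continuum-weakly closed, it is a V*-algebra. I expect the only real obstacle to be the continuum-weak closedness, and specifically the insight that the continuum-indexed family $(e_{h(t)})$ built from an injection $\II \To \supp(g)$ is exactly the right probe: it is heavy enough to detect the large support of $g$, yet every element of $F$ has a support so thin that its pullback along $h$ is null. The supporting set-theoretic facts---countability of not-$\succcurlyeq\II$ subsets of $\II$ from $\mathbf{PSP}$, and automatic measurability from $\mathbf{LM}$---are what make this probe legitimate, and they are the crux of the argument.
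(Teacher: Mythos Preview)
Your proof is correct and follows essentially the same approach as the paper's: build a continuum-weakly continuous functional from an injection $\II \to \supp(g)$ that annihilates $F$ (via $\mathbf{PSP}$, since the pullback of any small support is countable and hence null) but is nonzero on $g$. The only cosmetic differences are that you spell out the $*$-algebra closure argument explicitly (the paper simply says ``it is easy to see''), and you use the phase factor $g(h(t))/|g(h(t))|$ to force the integrand positive, whereas the paper first reduces to a positive $f_\infty \in \overline F \setminus F$ and then takes $\xi_t = \eta_t = e_{i(t)}$.
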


\begin{proof}
It is easy to see that $F$ is a $*$-algebra. The continuum-weak closure of $F$ is a V*-algebra $\overline F\subsetof \ell^\infty(X)$. Suppose that $\overline F \neq F$. It follows that there is a positive function $f_\infty \in \overline F \setminus F$. By definition of $F$, $\supp(f_\infty) \succcurlyeq \II$, i. e., there is an injection $i\: \II \To \supp(f_\infty)$. We now define a continuum-weakly continuous functional on $\ell^\infty(X)$ by
$$\varphi\: h \mapsto \int_0^1 \<e_{i(t)}|he_{i(t)}\>\, dt = \int_0^1 h(t) \, dt.$$
Our choice of $i$ guarantees that $\varphi(f_\infty) >0$. However, for all $f \in F$, we have that $i(\II) \cap \supp(f) \not \succcurlyeq \II$, but $i(\II) \cap \supp(f) \preccurlyeq \II$ because $i$ is an injection, so by $\mathbf{PSP}$, $i(\II) \cap \supp(f) \preccurlyeq \NN$. It follows that $\varphi(f) = 0$ for all $f \in F$, contradicting our assumption that $f_\infty \in \overline F$.
\end{proof}

\begin{example}\label{22}
In particular, $\{f \in \ell^\infty(\RR)\: \supp(f) \preccurlyeq \NN\}$ is a V*-algebra on $\ell^2(\RR)$, so a V*-algebra need not be unital.
\end{example}

\section{Separable Hilbert spaces}

\begin{remark}\label{23}
Let $\H$ be a separable Hilbert space. Every continuum-weakly continuous functional on $\B(\H)$ is ultraweakly continuous, by \ref{v28} and \cref{11}, so the ultraweak topology and the continuum-weak topology coincide on $\B(\H)$.
\end{remark}

\begin{proposition}\label{24}
Every nondegenerate V*-algebra on a separable Hilbert space is a von Neumann algebra.
\end{proposition}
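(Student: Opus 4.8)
The plan is to reduce the statement to the double commutant theorem by exploiting the coincidence of two topologies on a separable Hilbert space. Let $E \subsetof \B(\H)$ be a nondegenerate V*-algebra with $\H$ separable. The key observation is \cref{23}: when $\H$ is separable, the continuum-weak topology and the ultraweak topology agree on all of $\B(\H)$. This is the only point at which separability enters, and it is where the essential work has already been done (via \ref{v28} and \cref{11}).

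Granting this, the argument is immediate. Since $E$ is a V*-algebra it is closed in the continuum-weak topology by \cref{6}, hence closed in the ultraweak topology by the coincidence just noted. Because $E$ is a nondegenerate concrete C*-algebra, \ref{v21} gives that its ultraweak closure equals its double commutant $E''$; as $E$ is already ultraweakly closed, this forces $E = E''$. A concrete C*-algebra equal to its own double commutant is a von Neumann algebra, so the conclusion follows.

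I do not expect a genuine obstacle here, since every difficulty has been absorbed into \cref{23} and \ref{v21}. The one hypothesis I would be careful not to discard is nondegeneracy: it is exactly what licenses the appeal to \ref{v21} and what guarantees that $E$ contains the identity of $\B(\H)$. A degenerate ultraweakly closed C*-algebra on a separable space acts as a von Neumann algebra only on the closure of its range, and so would not qualify as a von Neumann algebra on $\H$ under the usual unital convention; thus nondegeneracy cannot be dropped, even though it contributes nothing to the topological heart of the argument.
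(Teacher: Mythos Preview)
Your argument is correct and matches the paper's own proof, which simply states that the proposition is a corollary of \cref{23}; you have merely spelled out the implicit invocation of \ref{v21} and the role of nondegeneracy. There is nothing to add.
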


\begin{proof}
This is a corollary of \cref{23}, above.
\end{proof}

\begin{remark}
We will need the disintegration of ultraweakly continuous states on a direct integral of von Neumann algebras on separable Hilbert spaces. Rather than asking the reader to review Borel fields of Hilbert spaces, we will reprove these results, in part, in order to demonstrate the simplicity of the direct integral in the Chang model. We will work with a probability measure $m$ on an index set $T \preccurlyeq \RR$. 
\end{remark}

\begin{remark}\label{101}
Let $(\H_t \: t \in T)$ be a family of separable infinite-dimensional Hilbert spaces. We define the direct integral Hilbert space $\int^\oplus \H_t \, dm$ to consist of equivalence classes of square-integrable functions $(\xi_t \in \H_t\: t \in T)$. Applying $\mathbf{AC_{ae}}$, we can choose isomorphisms $\H_t \iso \ell^2(\NN)$ for almost every $t \in T$, so $\int^\oplus \H_t \, dm \iso L^2(T, \ell^2(\NN)) \iso \ell^2(\NN)$. This shows that $\int^\oplus \H_t \, dm$ is separable and complete.
\end{remark}

\begin{remark}
Let $(M_t \subsetof \B(\H_t) \: t \in T)$ be a family of von Neumann algebras. We define the direct integral von Neumann algebra $\int^\oplus M_t \, dm$ to consists of equivalence classes of essentially bounded functions $(x_t \in M_t\: t \in T)$. It is straightforward to show that the canonical action of $\int^\oplus M_t \, dm$ on the direct integral Hilbert space $\int^\oplus \H_t \, dm$ is well-defined and faithful. It is then straightforward to show that $\int^\oplus M_t \, dm$ is closed under limits of monotone countable sequences, and is therefore a von Neumann algebra; see \ref{v23}.
\end{remark}

\begin{lemma}\label{102}
Let $\varphi \: \int^\oplus M_t \, dm \To \CC$ be an ultraweakly continuous functional. There exists an integrable family of ultraweakly continuous functionals $(\varphi_t \in (M_t)_*\: t \in T)$ such that $$\varphi: [x_t \in M_t \:t \in T] \mapsto \int_{t \in T} \varphi_t(x_t)\, dm.$$
\end{lemma}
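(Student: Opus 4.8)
The plan is to exploit the separability of the direct integral Hilbert space $\K := \int^\oplus \H_t\,dm$ to put $\varphi$ into a concrete fiberwise-integrable form, and then to disintegrate it using only the monotone and dominated convergence theorems for series together with the Cauchy--Schwarz inequality, all of which are available under $\mathbf{ZF + DC}$ and one of which (dominated convergence) is already invoked in the proof of \cref{15}.

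First I would represent $\varphi$ as a countable sum of vector functionals. Since $\K$ is separable and complete (\cref{101}) and $\int^\oplus M_t\,dm$ is a von Neumann algebra on it, the standard description of the predual of a von Neumann algebra on a separable Hilbert space --- whose statement is absolute --- yields sequences $(\eta^{(n)})$ and $(\xi^{(n)})$ in $\K$ with $\sum_n\|\eta^{(n)}\|^2 < \infty$ and $\sum_n\|\xi^{(n)}\|^2 < \infty$ such that $\varphi(x) = \sum_n\langle\eta^{(n)}|x\xi^{(n)}\rangle$ for all $x = [x_t] \in \int^\oplus M_t\,dm$. Fixing square-integrable representatives $(\eta^{(n)}_t)$ and $(\xi^{(n)}_t)$ of each class, a countable selection permitted by $\mathbf{DC}$, the definition of the inner product on the direct integral gives $\varphi(x) = \sum_n\int_T\langle\eta^{(n)}_t|x_t\xi^{(n)}_t\rangle\,dm$.

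Next I would construct the fiber functionals. Interchanging $\sum_n$ with $\int_T$ by monotone convergence gives $\int_T\sum_n\|\eta^{(n)}_t\|^2\,dm = \sum_n\|\eta^{(n)}\|^2 < \infty$, and likewise for $\xi$; hence $\sum_n\|\eta^{(n)}_t\|^2 < \infty$ and $\sum_n\|\xi^{(n)}_t\|^2 < \infty$ for almost every $t$. For such $t$, define $\varphi_t(y) = \sum_n\langle\eta^{(n)}_t|y\xi^{(n)}_t\rangle$ on $M_t$ (and $\varphi_t = 0$ otherwise). Cauchy--Schwarz in the index $n$ shows the series converges absolutely with $\|\varphi_t\| \leq (\sum_n\|\eta^{(n)}_t\|^2)^{1/2}(\sum_n\|\xi^{(n)}_t\|^2)^{1/2}$; as a norm limit of vector functionals, $\varphi_t$ lies in $(M_t)_*$, a conclusion also immediate from \ref{v28}. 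Integrability of the family then follows from Cauchy--Schwarz for the integral, $\int_T\|\varphi_t\|\,dm \leq (\sum_n\|\eta^{(n)}\|^2)^{1/2}(\sum_n\|\xi^{(n)}\|^2)^{1/2} < \infty$, where measurability of $t \mapsto \|\varphi_t\|$ is automatic because $\mathbf{LM}$ renders every real-valued function on $T \preccurlyeq \RR$ Lebesgue measurable.

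Finally, to recover the formula I would interchange the sum and the integral once more, now by dominated convergence: for $x = [x_t]$ with $C = \|x\|_\infty$, the bound $\sum_n\int_T|\langle\eta^{(n)}_t|x_t\xi^{(n)}_t\rangle|\,dm \leq C(\sum_n\|\eta^{(n)}\|^2)^{1/2}(\sum_n\|\xi^{(n)}\|^2)^{1/2} < \infty$ justifies the exchange, so that $\varphi(x) = \int_T\sum_n\langle\eta^{(n)}_t|x_t\xi^{(n)}_t\rangle\,dm = \int_T\varphi_t(x_t)\,dm$. The one step that genuinely requires care is the initial representation of $\varphi$ as a countable sum of vector functionals; I expect this to be the main obstacle, since it is where the structure theory of normal functionals enters and where one must confirm, via absoluteness, that this theory survives in the Chang model. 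An alternative that avoids the bookkeeping of the series is to identify $\K\otimes\ell^2 \cong \int^\oplus(\H_t\otimes\ell^2)\,dm$ and express $\varphi$ as a single vector functional for the amplified algebra $M\otimes 1$, whereupon each $\varphi_t$ is the corresponding vector functional on $M_t\otimes 1 \cong M_t$; the two routes carry identical content.
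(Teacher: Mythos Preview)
Your proposal is correct and follows essentially the same route as the paper: represent $\varphi$ as a square-summable sum of vector functionals on the separable direct integral Hilbert space, use Tonelli (your ``monotone convergence'') to get fiberwise square-summability almost everywhere, and then Fubini (your ``dominated convergence'') together with essential boundedness of $[x_t]$ to swap $\sum_n$ and $\int_T$. Your treatment is in fact more explicit than the paper's on the integrability bound for $(\|\varphi_t\|)$ and on why $\varphi_t \in (M_t)_*$, and you correctly flag the representation of $\varphi$ as $\sum_n\langle\eta^{(n)}|\,\cdot\,\xi^{(n)}\rangle$ as the step requiring absoluteness.
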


\begin{proof}
Since $\varphi$ is ultraweakly continuous, there exist two square-summable sequences of vectors in $\int_t \H_t \,dm$, which may be written as $([\xi^n_t\:t\in \H]\: n \in \NN)$ and $([\eta^n_t\:t\in \H]\: n \in \NN)$, such that
$$\varphi: [x_t \: t \in T] \mapsto \sum_n \int_t \< \xi^n_t| x_t \eta^n_t\> \, dm$$
The square-summability of the two sequences means that
$ \sum_n \int_t \| \xi^n_t\|^2 \, dm < \infty$ and $ \sum_n \int_t \| \eta^n_t\|^2 \, dm < \infty$. It follows by Tonelli's theorem that $(\xi^n_t\:n \in \NN)$ and $(\eta^n_t\: n \in \NN)$ are square-summable for almost every $t \in T$. Furthermore, since the elements of $\int^\oplus M_t dm$ are essentially bounded by definition, Fubini-Tonelli implies that
$$ \varphi: [x_t\: t \in T] \mapsto \int_t \sum_n \< \xi^n_t| x_t \eta^n_t\> \, dm.$$
\end{proof}

\begin{lemma}\label{39}
Every continuum-weakly continuous functional $\varphi$ on the von Neumann algebra $\bigoplus_t M_t$ is of the form $$(x_t:t \in T) \mapsto \int_t \varphi_t(x_t)\,dm$$
for some integrable family $(\varphi_t \in (M_t)_*\:t \in T)$ of ultraweakly continuous functionals, and some probability measure $m$ on $T$.
\end{lemma}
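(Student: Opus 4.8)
The plan is to work directly with the integral form of a continuum-weakly continuous functional, exploiting that $\bigoplus_t M_t$ acts on the Hilbert-space direct sum $\H = \bigoplus_t \H_t$ and that every vector of $\H$ has only countably many nonzero components. By \cref{3}, write $\varphi(x) = \int_0^1 \< \eta_s | x \xi_s\> \, ds$ for families $(\eta_s), (\xi_s) \in \L^2(\II, \H)$. Decomposing into components, $\eta_s = (\eta_{s,t})$ and $\xi_s = (\xi_{s,t})$, the action of $x = (x_t)$ gives $\< \eta_s | x \xi_s\> = \sum_t \< \eta_{s,t} | x_t \xi_{s,t}\>$, so that $\varphi(x) = \int_0^1 \sum_t \< \eta_{s,t} | x_t \xi_{s,t}\> \, ds$. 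The qualitative content of the lemma is that, in contrast to the direct integral of \cref{102}, such a $\varphi$ can only ``see'' a countable, atomic part of the index set $T$.

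First I would produce the measure. Set $v_t = \int_0^1 \| \eta_{s,t}\| \, \|\xi_{s,t}\| \, ds$. Two applications of Cauchy-Schwarz—once in $s$ for each fixed $t$, and once in the summation over $t$—bound $\sum_t v_t$ by $\|\eta\|_{\L^2}\,\|\xi\|_{\L^2} < \infty$; in particular $S = \{t : v_t > 0\}$ is countable. Assuming $\varphi \neq 0$, put $V = \sum_t v_t > 0$ and let $m$ be the atomic probability measure with $m(\{t\}) = v_t / V$, concentrated on the countable set $S$. For $t \in S$ define $\varphi_t$ on $M_t$ by $\varphi_t(y) = (V/v_t) \int_0^1 \< \eta_{s,t} | y \xi_{s,t}\> \, ds$; this integral converges since its integrand is dominated by $\|y\|\,\|\eta_{s,t}\|\,\|\xi_{s,t}\|$, and the same bound yields $\|\varphi_t\| \leq V$, so $(\varphi_t)$ is an integrable family with $\int_T \| \varphi_t\| \, dm \leq V$. (For $\varphi = 0$ take any point mass and $\varphi_t = 0$.)

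Next I would check the two remaining points. Each $\varphi_t$ is a continuum-weakly continuous functional on $M_t \subsetof \B(\H_t)$, being of the form (I) on $\H_t$ via the families $(\eta_{s,t})$ and $(\xi_{s,t})$, which lie in $\L^2(\II, \H_t)$; since $\H_t$ is separable, \cref{23} identifies the continuum-weak and ultraweak topologies, so $\varphi_t \in (M_t)_*$ as required. Finally, unwinding definitions gives $\int_T \varphi_t(x_t)\, dm = \sum_t m(\{t\})\,\varphi_t(x_t) = \sum_t \int_0^1 \< \eta_{s,t} | x_t \xi_{s,t}\>\, ds$, and Fubini's theorem—applicable because the double sum-integral of absolute values is bounded by $\|x\| \sum_t v_t = \|x\|\, V < \infty$, with only the countably many indices in $S$ contributing—lets me exchange the sum over $t$ with the integral over $\II$ and recover $\int_0^1 \< \eta_s | x \xi_s\>\, ds = \varphi(x)$.

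The main obstacle, such as it is, is the bookkeeping that makes $m$ a genuine probability measure while keeping $(\varphi_t)$ integrable; this is secured entirely by the Cauchy-Schwarz estimate on $\sum_t v_t$, which both forces the supporting index set to be countable and bounds the norms $\|\varphi_t\|$. The only input from the ambient set theory is \cref{23}, used to upgrade continuum-weak continuity of each $\varphi_t$ to ultraweak continuity, and this is exactly where the separability of the $\H_t$ is essential.
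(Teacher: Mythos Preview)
There is a genuine gap: your claim that the measure $m$ must be atomic, concentrated on a countable set $S$, is false. Take $T = \II$ with $M_t = \CC$ acting on $\H_t = \CC$, so that $\bigoplus_t M_t = \ell^\infty(\II)$ on $\ell^2(\II)$. With $\eta_s = \xi_s = e_s$ the standard basis vector at $s$, the functional $\varphi(f) = \int_0^1 \langle e_s \,|\, f e_s\rangle\,ds = \int_0^1 f(s)\,ds$ is continuum-weakly continuous, and the measure it exhibits on $T$ is Lebesgue, not atomic. In your notation $\eta_{s,t} = \xi_{s,t}$ equals $1$ when $s=t$ and $0$ otherwise, so $v_t = \int_0^1 \|\eta_{s,t}\|\,\|\xi_{s,t}\|\,ds = 0$ for every $t$, and $V = 0$ although $\varphi(1) = 1$. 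Your Cauchy--Schwarz bound $\sum_t v_t \leq \|\eta\|_{\L^2}\|\xi\|_{\L^2}$ is correct but vacuous here, and the interchange of $\sum_t$ and $\int_0^1$ fails outright: this is exactly the classical failure of Tonelli for the product of Lebesgue measure with counting measure on an uncountable set, and no bookkeeping on the $v_t$ can repair it. The ``qualitative content'' you describe is therefore the opposite of the truth: the whole force of the lemma is that $\varphi$ \emph{can} see a diffuse measure on $T$.

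The paper's route is quite different. It passes to the continuum amplification (\cref{14}) to realize $\varphi$ as a vector functional, forms $|\varphi|$ via \ref{v14.1}, and reads off $m$ as $A \mapsto |\varphi|(p_A)/\|\varphi\|$ on subsets $A \subseteq T$; countable additivity holds because its failure would yield a non-ultraweakly-continuous state on $\ell^\infty(\NN)$. Since $|\varphi(x)|^2 \leq \|\varphi\|\cdot|\varphi|(x^*x)$, the functional $\varphi$ annihilates any $(x_t)$ vanishing $m$-almost everywhere, and hence factors through the quotient $\bigoplus_t M_t \to \int_t^\oplus M_t\,dm$. That direct integral acts on a \emph{separable} Hilbert space (\cref{101}), so the induced functional is automatically ultraweakly continuous (\ref{v28}), and \cref{102} supplies the disintegration $(\varphi_t)$.
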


\begin{proof}
The functional $\varphi$ is a vector functional for the continuum amplification of the V*-algebra $\bigoplus_t M_t$ (\cref{14}), so there exists a unique positive functional $|\varphi|$ such that $\|\,|\varphi|\,\| = \| \varphi\|$, and $|\varphi(x)|^2 \leq \|\varphi\| \cdot |\varphi|(x^*x)$ for all $x \in \bigoplus_t M_t$ (\ref{v14.1}). The restriction of $|\varphi|$ to $\ell^\infty(T) \subsetof \bigoplus_t M_t$ satisfies $|\varphi|(\sum_n p_n) = \sum_n |\varphi|(p_n)$ for all countable families $(p_n)$ of pairwise disjoint projections in $\ell^\infty(T)$, for otherwise we would obtain a state on $\ell^\infty(\NN)$ that is not ultraweakly continuous. Thus, writing $p_A$ for the projection in $\ell^\infty(T)$ corresponding to a subset $A \subsetof T$, we find that $m \: A \mapsto|\varphi|(p_A)\cdot \| \varphi\|\inv $ is a probability measure on $T$. It is straightforward to show that if $(x_t) \in \bigoplus_t M_t$ vanishes almost everywhere, then $\varphi\: (x_t) \mapsto 0$; thus, $\varphi$ factors through the canonical map $(x_t) \mapsto [x_t]$:
$$
\begin{tikzcd}
\bigoplus_t M_t \arrow{r} \arrow{rd}{\varphi}
&
\int_t M_t\, dm  \arrow[dotted]{d}{\varphi'}\\
&
\CC
\end{tikzcd}
$$
Since $\int_t M_t\, dm$ is a von Neumann algebra on the separable Hilbert space $\int_t \H_t \, dm$, the functional $\varphi'$ is automatically (\ref{v28}) ultraweakly continuous, and \cref{102} yields the desired family $(\varphi_t \in (M_t)_*\: t \in T)$.
\end{proof}

\section{Canonical continuum predual}

\begin{remark}\label{25}
If $M \subsetof \B(\H)$ is a von Neumann algebra, then the canonical predual $M_*$ of $M$ consists of all ultraweakly continuous functionals $M \To \CC$. It is a predual of $M$ in the sense that $(M_*)^*$ is canonically isomorphic to $M$, but I don't know if it is unique up to isometric isomorphism. Of course, the predual is unique if $\H$ is separable.
\end{remark}

\begin{definition}\label{26}
If $E \subsetof \B(\H)$ is a V*-algebra, then the \underline{canonical continuum predual} $E_\bullet$ of $E$ consists of all continuum-weakly continuous functionals $E \To \CC$.
\end{definition}

\begin{proposition}\label{27}
Let $E \subsetof \B(\H)$ be a V*-algebra. The function $\iota\: E \To (E_\bullet)^*$ defined by $x \mapsto ( \varphi(x)\: \varphi \in E_\bullet)$ is an isometry onto the continuum-linear functionals, i. e., functionals $f \in (E_\bullet)^*$ such that
$$f\left( \int_0^1 \varphi_t\,dt\right) = \int_0^1 f(\varphi_t)\,dt   $$
for all integrable families $(\varphi_t \in E_\bullet \: t \in \II)$.
\end{proposition}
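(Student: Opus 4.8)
The plan is to verify in turn that $\iota$ is a well-defined contraction, that it is isometric, that its range lands among the continuum-linear functionals, and finally that every continuum-linear functional arises this way; the first three points are routine and the fourth carries the content. For $x \in E$ and $\varphi \in E_\bullet$ we have $|\varphi(x)| \le \|\varphi\|\,\|x\|$, so $\iota(x) \in (E_\bullet)^*$ with $\|\iota(x)\| \le \|x\|$. For the reverse inequality, each vector functional $\omega_{\eta,\xi}\: a \mapsto \<\eta| a \xi\>$ is of the form (I) with constant families, hence lies in $E_\bullet$ with $\|\omega_{\eta,\xi}\| \le \|\eta\|\,\|\xi\|$; since $\|x\| = \sup\{|\<\eta| x \xi\>| : \|\eta\|,\|\xi\| \le 1\}$, testing $\iota(x)$ against these functionals gives $\|\iota(x)\| \ge \|x\|$. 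That $\iota(x)$ is continuum-linear is immediate from \cref{9}: if $(\varphi_t)$ is integrable, then $\int_0^1 \varphi_t\,dt$ is the functional $a \mapsto \int_0^1 \varphi_t(a)\,dt$, so evaluating at $x$ gives $\iota(x)\left(\int_0^1 \varphi_t\,dt\right) = \int_0^1 \iota(x)(\varphi_t)\,dt$.

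The substance is surjectivity. Fix a continuum-linear $f \in (E_\bullet)^*$. The map $(\eta,\xi) \mapsto f(\omega_{\eta,\xi})$ is sesquilinear on $\H$ and bounded by $\|f\|$, so the Riesz representation of bounded sesquilinear forms furnishes an operator $x \in \B(\H)$, with $\|x\| \le \|f\|$, such that $f(\omega_{\eta,\xi}) = \<\eta| x \xi\>$ for all $\eta,\xi \in \H$. I would then compute $f$ on an arbitrary $\varphi \in E_\bullet$. Writing $\varphi\: a \mapsto \int_0^1 \<\eta_t| a \xi_t\>\,dt$ for families with $(\|\eta_t\|^2)$ and $(\|\xi_t\|^2)$ integrable, the family $(\omega_{\eta_t,\xi_t})$ is integrable in $E_\bullet$ (since $\|\omega_{\eta_t,\xi_t}\| \le \tfrac12(\|\eta_t\|^2 + \|\xi_t\|^2)$) and $\varphi = \int_0^1 \omega_{\eta_t,\xi_t}\,dt$. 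Continuum-linearity of $f$ then yields $f(\varphi) = \int_0^1 f(\omega_{\eta_t,\xi_t})\,dt = \int_0^1 \<\eta_t| x \xi_t\>\,dt$, a formula valid for every representing family of $\varphi$.

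It remains to show $x \in E$; granting this, the last display reads $f(\varphi) = \varphi(x) = \iota(x)(\varphi)$ for every $\varphi \in E_\bullet$, whence $f = \iota(x)$. First I would record the consequence of the previous paragraph for the ambient algebra. If $\Phi \in \B(\H)_\bullet$ annihilates $E$, write $\Phi\: a \mapsto \int_0^1 \<\eta_t| a \xi_t\>\,dt$ via \cref{3}; its restriction $\varphi := \Phi|_E$ is the \emph{zero} element of $E_\bullet$, represented by those same families, so $0 = f(0) = f(\varphi) = \int_0^1 \<\eta_t| x \xi_t\>\,dt = \Phi(x)$. Thus $\Phi(x) = 0$ for every continuum-weakly continuous functional on $\B(\H)$ that annihilates $E$.

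Finally I would deduce $x \in E$ from the continuum-weak closedness of $E$, and this is where care is needed to stay within the choice principles of the Chang model. Rather than invoking an infinite-dimensional Hahn--Banach separation, I would localize: if $x \notin E$, then since $E$ is closed there is a basic continuum-weak neighborhood $U = \{a : |\Phi_i(a) - \Phi_i(x)| < \epsilon,\ i = 1,\dots,n\}$ of $x$ with $U \cap E = \emptyset$, for finitely many $\Phi_i \in \B(\H)_\bullet$. The image $W = \{(\Phi_i(a))_i : a \in E\}$ is a linear subspace of $\CC^n$, and $U \cap E = \emptyset$ forces $(\Phi_i(x))_i \notin W$. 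Finite-dimensional separation, which needs no choice, produces scalars $c_1,\dots,c_n$ with $\sum_i c_i\Phi_i(a) = 0$ for all $a \in E$ yet $\sum_i c_i \Phi_i(x) \neq 0$; the functional $\Phi = \sum_i c_i \Phi_i$ lies in $\B(\H)_\bullet$, annihilates $E$, and satisfies $\Phi(x) \neq 0$, contradicting the preceding paragraph. Hence $x \in E$. I expect this localization to be the crux of the argument: the naive appeal to Hahn--Banach separation is unavailable without choice, and the observation that closedness reduces the separation to finitely many functionals, hence to finite dimensions, is exactly what makes the proof go through in the Chang model.
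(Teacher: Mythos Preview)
Your argument is correct, and it follows a genuinely different route from the paper's proof. The paper reduces to the nondegenerate case, invokes Kaplansky's density theorem to embed $(E'')_*$ isometrically into $E_\bullet$, and then uses the canonical duality $E'' \cong ((E'')_*)^*$ to produce the candidate operator $x_0 \in E''$ from the restriction $f|_{(E'')_*}$. You instead obtain the candidate $x \in \B(\H)$ directly from the Riesz representation of the bounded sesquilinear form $(\eta,\xi) \mapsto f(\omega_{\eta,\xi})$, bypassing both Kaplansky density and the predual machinery for von Neumann algebras. Both proofs then finish with the same separation step: find a continuum-weakly continuous functional on $\B(\H)$ that annihilates $E$ but not the candidate, and use continuum-linearity of $f$ to derive a contradiction. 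Your explicit reduction of that separation to a finite-dimensional problem is a nice clarification; the paper simply asserts the existence of the separating functional, but the justification is exactly the one you give, and no infinite-dimensional Hahn--Banach is needed in either version. Your approach is more self-contained and avoids the WLOG reduction to nondegenerate $E$; the paper's approach is terser and leans on standard von Neumann algebra duality.
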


\begin{proof}
Without loss of generality, $E$ is nondegenerate. By Kaplansky's density theorem, each ultraweakly continuous functional on $E''$ restricts to a continuum-weakly continuous functional on $E$ of the same norm, so $(E'')_* \subsetof E_\bullet$. This implies that $\iota$ is an isometry, since certainly $\iota(x)(\varphi) \leq \|x\| \cdot \|\varphi\|$. The continuum-linearity of $\iota(x)$ for each $x \in E$ is trivial by definition of integration of continuum-weakly continuous functionals; see \cref{9}.

It remains to show that every continuum-linear functional $f \: E_\bullet \To \CC$ is in the image of $\iota$. Suppose otherwise, and consider the restriction of $f$ to $(E'')_* \subsetof E_\bullet$. Since $E''$ is canonically isomorphic to the dual $((E'')_*)^*$, we obtain an operator $x_0 \in E''$ such that $\varphi(x_0) = f(\varphi)$ for all $\varphi \in (E'')_*$. If $x_0 \not \in E$, then there is a continuum-weakly continuous functional $x \mapsto \int_0^1 \< \eta_t | x\xi_t \> \, dt$ that vanishes on $E$, but not on $x_0$. The former property can be phrased as the equation $\int_0^1 \<\eta_t| \cdot | \xi_t\> \, dt  = 0$ in $E_\bullet$. Applying the continuum-linearity of $f$, we find that 
$$ \int_0^1 \<\eta_t | x_0 \xi_t \> \, dt
= \int_0 ^1 f(\<\eta_t| \cdot |\xi_t \>) \, dt
=f \left( \int_0^1 \< \eta_t | \cdot | \xi_t \> \, dt \right) = 0. $$ This contradicts our choice of $(\xi_t)$ and $(\eta_t)$.
\end{proof}

\begin{remark}\label{28}
We briefly remark that $E$ is the dual of $E_\bullet$, if we view the latter as endowed with a variant of normed vector space structure. Let us say that a normed set $X$ is a set equipped with a function to nonnegative real numbers, which intuitively describes the size of its elements. If the normed set $X$ is equipped with a well-behaved notion of integration for functions $I \To X$, for all complex-valued measure spaces $I \prec \NN$, then $X$ is a normed vector space. If $X$ is equipped with such a notion for all $I \preccurlyeq \NN$, then $X$ is a Banach space. The space $E_\bullet$ is equipped with such a notion for all $I \preccurlyeq \RR$.
\end{remark}

\begin{lemma}\label{29}
Let $E \subsetof \B(\H)$ be a unital V*-algebra, and let $\S_\bullet \subsetof E_\bullet$ be its space of continuum-weakly continuous states. For each $x \in E_{sa}$, we define $\hat x \: \S_\bullet \To \RR$ by $\hat x (\mu) = \mu(x)$. Then, $x \mapsto \hat x$ is a bijective isometric correspondence between the self-adjoint elements of $E$, and functions $\S_\bullet \To \RR$ that are continuum-affine, i. e., that satisfy
$$f\left(\int_0^1 \mu_t \, dt\right) = \int_0^1 f(\mu_t) \, dt$$
for all families $(\mu_t \in \S_\bullet \: t \in \II)$.
\end{lemma}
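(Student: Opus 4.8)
The plan is to treat the three easy assertions—that $\hat x$ is continuum-affine, that $x \mapsto \hat x$ is isometric, and that it is injective—by unwinding definitions, and to concentrate the work on surjectivity. For well-definedness, note that $\hat x(\mu) = \mu(x) \in \RR$ since $x = x^*$ and $\mu$ is a state, and that for an integrable family $(\mu_t)$ the functional $\int_0^1 \mu_t\, dt$ is again a state (its value at the unit is $1$) whose value at $x$ is $\int_0^1 \mu_t(x)\, dt$ by the definition of continuum integration of functionals (\cref{9}); this is precisely the continuum-affine identity for $\hat x$. For the isometry I would give the target the supremum norm and use that, for self-adjoint $x$, the norm $\|x\|$ equals the numerical radius; since every vector state lies in $\S_\bullet$, we get $\sup_{\mu \in \S_\bullet} |\hat x(\mu)| \geq \sup_{\|\xi\| = 1} |\< \xi | x \xi\>| = \|x\|$, while the reverse inequality is immediate because each state has norm $1$. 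Injectivity then follows formally, applied to $x - y$.

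For surjectivity, the idea is to manufacture from a continuum-affine $f\: \S_\bullet \To \RR$ a continuum-linear functional on $E_\bullet$ and then to invoke \cref{27}, which identifies the continuum-linear functionals on $E_\bullet$ with exactly $\iota(E)$. First I would observe that continuum-affineness forces ordinary affineness: taking $(\mu_t)$ to be a two-valued step family recovers $f(s\mu + (1-s)\mu') = s f(\mu) + (1-s) f(\mu')$. This lets me extend $f$ to the positive cone by homogeneity, $\tilde f(\lambda \mu) = \lambda f(\mu)$, verify additivity there from affineness, and then pass to all self-adjoint continuum-weakly continuous functionals through the Jordan decomposition of \cref{11}, yielding an $\RR$-linear $\tilde f\: E_{\bullet, sa} \To \RR$. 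Complexifying via $\varphi = \mathrm{Re}\,\varphi + i\,\mathrm{Im}\,\varphi$, whose self-adjoint summands remain continuum-weakly continuous because the adjoint operation is continuum-weakly continuous (\cref{5}), produces a $\CC$-linear $F\: E_\bullet \To \CC$. Crucially, $F$ is automatically bounded, since in the Chang model every linear functional on a Banach space is bounded (\ref{v4}); hence $F$ is continuous and $f = F|_{\S_\bullet}$ is continuous as well.

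The main obstacle is verifying that $F$ is continuum-linear, i.e.\ that $F(\int_0^1 \varphi_t\, dt) = \int_0^1 F(\varphi_t)\, dt$; this does not follow from boundedness alone, because the continuum integral is only a Pettis-type integral rather than a Bochner integral, so bounded functionals need not commute with it. I would reduce to real and then to positive families, so that it suffices to treat a positive family $(\psi_t)$ with weights $\lambda_t = \psi_t(1)$ and states $\mu_t = \psi_t / \lambda_t$. Setting $\Lambda = \int_0^1 \lambda_t\, dt$ and $d\tilde m = \lambda_t\, dt / \Lambda$, the integral $\int_0^1 \psi_t\, dt$ equals $\Lambda \nu$ with $\nu = \int \mu_t\, d\tilde m$ a continuum-weakly continuous state (\cref{3}). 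Since $\tilde m$ is a probability measure on $\II \preccurlyeq \RR$, it is a pushforward $g_*$ of Lebesgue measure (\ref{v3.4}), so $\nu = \int_0^1 \mu_{g(s)}\, ds$ is a genuine Lebesgue continuum-convex combination; applying the continuum-affineness of $f$ to the family $(\mu_{g(s)})$ gives $f(\nu) = \int f(\mu_t)\, d\tilde m$, and unwinding the weights yields $\tilde f(\int_0^1 \psi_t\, dt) = \int_0^1 \tilde f(\psi_t)\, dt$. The real convenience here is that all the measurability needed for these vector-valued integrals and for the attendant Fubini/Tonelli steps is automatic in the Chang model, where every subset of a set injectable into $\RR$ is measurable; this is exactly what removes the usual measurable-selection difficulty with the Jordan parts $(\psi_t^{\pm})$, whose norms are integrable by \cref{11} so that $\int_0^1 \psi_t^{\pm}\, dt$ are defined.

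With continuum-linearity established, \cref{27} supplies a unique $x \in E$ with $\varphi(x) = F(\varphi)$ for all $\varphi \in E_\bullet$. Testing against vector states shows the numerical range of $x$ is real, whence $x = x^*$, and then $\hat x(\mu) = \mu(x) = F(\mu) = f(\mu)$ for every $\mu \in \S_\bullet$, so $\hat x = f$. I expect the delicate point to be organizing the positive-family reduction cleanly—in particular the passage from the weighted average $\int \mu_t\, d\tilde m$ to a Lebesgue continuum-convex combination—though the automatic measurability in $\CCC$ makes this reparametrization essentially bookkeeping rather than a genuine analytic difficulty.
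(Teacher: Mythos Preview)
Your proposal is correct and follows essentially the same route as the paper: extend $f$ from $\S_\bullet$ to a linear functional on $E_\bullet$ via \cref{11}, invoke automatic boundedness, verify continuum-linearity by reducing to positive families and reparametrizing the weighted integral as a Lebesgue average via \ref{v3.4}, and then apply \cref{27}. You spell out a few points the paper leaves implicit (the complexification, the self-adjointness of the resulting $x$, and the measurable-selection issue for the Jordan parts), but the architecture is identical.
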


\begin{proof}
We observe that $\hat x = \iota(x)|_{\S_\bullet}$, so $\hat x$ is continuum-affine. Furthermore, the function $x \mapsto \hat x$ is isometric, because the norm of a self-adjoint operator can be computed as a supremum over vector states.

If $f \: \S_\bullet\To \RR$ is continuum-affine, then it is affine in the usual sense; therefore, $f$ extends uniquely to a linear function $\tilde f\:E_\bullet \To \CC$, by \cref{11}. The canonical continuum predual $E_\bullet$ is a Banach space by \cref{10}, so $\tilde f$ is automatically bounded. By \cref{27}, to show that $f = \hat x$ for some $x \in E$, it is sufficient to show that $\tilde f$ is continuum-linear, i. e.,
$$\tilde f\left( \int_0^1 \varphi_t\,dt\right) = \int_0^1 \tilde f(\varphi_t)\,dt   $$
for each integrable family $(\varphi_t \in E_\bullet \: t \in \II)$. By \cref{11}, we may assume that each $\varphi_t$ is positive, and that $\| \int_0^1 \varphi_t \, dt\| = \int_0^1 \varphi_t(1) \, dt = 1$. The assignment $A \mapsto \int_{t\in A} \|\varphi_t\|\, dt$ is a probability measure $m$ on $\II$. If we define $\hat \varphi_t = \|\varphi_t\| \inv \varphi_t$ for nonzero $\varphi_t$, and $\hat \varphi_t = \psi$  otherwise, for some fixed $\psi \in E_\bullet$, then it remains to show 
$$ \tilde f \left(\int_{t \in T} \hat \varphi_t \, dm \right)= 
\int_{t \in T} \tilde f(\hat \varphi_t)\, dm,$$
which is just the condition that $f$ is continuum-affine on $\S_\bullet$, because $m$ is a pushforward of Lebesgue measure on $\II$. 
\end{proof}

\section{The enveloping V*-algebra}

\begin{definition}\label{40}
If $A$ is a C*-algebra, then its \underline{enveloping V*-algebra} is $V^*(A) = \overline{\gamma_\S(A)}$.
\end{definition}

\begin{lemma}\label{41}
Every continuum-weakly continuous state on $V^*(A)$ is a vector state, so we have isometric isomorphisms:
$$
\begin{tikzcd}
\S(A) \arrow[leftrightarrow]{r}{\iso} &
\S(\gamma_\S(A))  \arrow[leftrightarrow]{r}{\iso} &
\S_\bullet(V^*(A))  \arrow[leftrightarrow]{r}{\iso} &
\S_*(W^*(A)) 
\\
\mu \arrow[mapsto]{r} & 
\< \xi_\mu | \cdot \xi_\mu\> \arrow[mapsto]{r} & 
\< \xi_\mu | \cdot \xi_\mu\> \arrow[mapsto]{r} & 
\< \xi_\mu | \cdot \xi_\mu\> 
\end{tikzcd}
$$
\end{lemma}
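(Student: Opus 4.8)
The plan is to show first that every continuum-weakly continuous state on $V^*(A)$ is a \emph{single} vector state, realized by one of the cyclic vectors $\xi_\mu$ of the universal representation, and then to read off the three bijections as the restriction maps along the continuum-weakly dense inclusions $\gamma_\S(A) \subseteq V^*(A) \subseteq W^*(A) \subseteq \B(\H_\S)$. For the realization I would take $\omega \in \S_\bullet(V^*(A))$ and set $\mu = \omega \circ \gamma_\S$. This is positive, and it is a state: writing $\omega(x) = \int_0^1 \langle \zeta_t | x \zeta_t\rangle \, dt$ with $\int_0^1 \|\zeta_t\|^2\,dt = 1$ via \cref{12}, and using that $\gamma_\S$ is nondegenerate so $\gamma_\S(e_\lambda) \to 1$ strongly along any approximate unit $(e_\lambda)$ of $A$, dominated convergence gives $\|\mu\| = \lim_\lambda \mu(e_\lambda) = 1$. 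Since $\mu \in \S(A)$, the universal representation contains the GNS summand $\gamma_\mu$ with cyclic vector $\xi_\mu \in \H_\mu \subseteq \H_\S$, so $\langle \xi_\mu | \gamma_\S(a)\xi_\mu\rangle = \mu(a) = \omega(\gamma_\S(a))$ for all $a \in A$. Thus $\omega$ and the vector state $\langle \xi_\mu | \cdot\, \xi_\mu\rangle$ agree on $\gamma_\S(A)$; both are continuum-weakly continuous on $V^*(A)$ (the latter is of the form (I)), and $\gamma_\S(A)$ is continuum-weakly dense in $V^*(A)$ by \cref{40}, so as the set where two continuous functionals agree is closed, they agree on all of $V^*(A)$. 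This is the first assertion.

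For the bijections I would parametrize by $\mu \in \S(A)$ and restrict $\langle \xi_\mu | \cdot\, \xi_\mu\rangle$ to each of $\gamma_\S(A)$, $V^*(A)$, and $W^*(A)$; each restriction is a state, continuum-weakly continuous on the first two algebras and ultraweakly continuous on $W^*(A)$ (a vector state on a von Neumann algebra is normal). All three assignments share the left inverse $\nu \mapsto \nu \circ \gamma_\S$, since $\langle\xi_\mu|\gamma_\S(a)\xi_\mu\rangle = \mu(a)$, whence injectivity. For surjectivity, onto $\S_\bullet(V^*(A))$ this is precisely the realization above; onto $\S(\gamma_\S(A))$ it holds because any state $\nu$ there equals $\langle\xi_\mu|\cdot\,\xi_\mu\rangle|_{\gamma_\S(A)}$ for $\mu = \nu\circ\gamma_\S$ (they agree by construction on $\gamma_\S(A)$); and onto $\S_*(W^*(A))$ it holds because a normal state on $W^*(A)$ restricts to a continuum-weakly continuous state on $V^*(A)$ (the continuum-weak topology being finer, \cref{4}), which is realized by some $\xi_\mu$, and then the normal state and $\langle\xi_\mu|\cdot\,\xi_\mu\rangle$ agree on the ultraweakly dense $\gamma_\S(A)$, hence everywhere.

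For isometry I would observe that the differences $\langle\xi_\mu|\cdot\,\xi_\mu\rangle - \langle\xi_\nu|\cdot\,\xi_\nu\rangle$ are, on $W^*(A) = \gamma_\S(A)''$, differences of normal states and so ultraweakly continuous. By \cref{v22} the unit ball of $\gamma_\S(A)$ is ultraweakly dense in that of $W^*(A)$, so for such a functional the supremum of its modulus over the unit ball agrees whether taken over $\gamma_\S(A)$ or over $W^*(A)$; that is, these two norms coincide. Since restriction only decreases norm along $\gamma_\S(A) \subseteq V^*(A) \subseteq W^*(A)$, the intermediate norm over $V^*(A)$ is squeezed to the same value, and the pullback $\nu \mapsto \nu\circ\gamma_\S$ is isometric because the surjective $*$-homomorphism $\gamma_\S$ carries the unit ball of $A$ onto that of $\gamma_\S(A)$. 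This identifies all four state spaces isometrically.

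The main obstacle I expect is the realization step: a priori a continuum-weakly continuous state is only an \emph{integral} of vector states by \cref{12}, and the crux is that on the enveloping algebra $V^*(A)$—where $\gamma_\S(A)$ is continuum-weakly dense and already carries every state of $A$ as a genuine vector state—this integral collapses to a single vector state. The subtlety to respect throughout is that continuum-weakly continuous functionals need not be ultraweakly continuous, so the isometry argument must be routed through differences that happen to be normal on $W^*(A)$ in order to apply \cref{v22}.
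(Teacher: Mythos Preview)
Your proof is correct and follows essentially the same route as the paper's: realize a continuum-weakly continuous state $\omega$ on $V^*(A)$ as the vector state $\langle \xi_\mu|\cdot\,\xi_\mu\rangle$ for $\mu=\omega\circ\gamma_\S$, using continuum-weak density of $\gamma_\S(A)$ to identify two continuous functionals that agree there, and then read off the bijections with $\S(\gamma_\S(A))$ and $\S_*(W^*(A))$ by restriction. The paper's proof is much terser---three sentences---and leaves the verification that $\mu$ is a state, the surjectivity onto $\S_*(W^*(A))$, and the isometry entirely implicit; you have filled in these details carefully, including the Kaplansky density squeeze for isometry, which the paper does not spell out.
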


\begin{proof}
By construction every state of $A$ factors through the universal representation $\gamma_\S$ via a vector state. Each vector state is ultraweakly continuous, and therefore extends uniquely to an ultraweakly continuous state on the ultraweak closure $W^*(A)$, and also to a unique continuum-weakly continuous state on the continuum-weak closure $V^*(A)$.
\end{proof}

\begin{lemma}\label{42}
The continuum-weak topology on $V^*(A)$ coincides with the ultraweak topology.
\end{lemma}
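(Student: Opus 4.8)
The plan is to compare the two topologies through their spaces of continuous linear functionals. Both are weak topologies: the continuum-weak topology on $V^*(A)$ is the subspace topology induced by \cref{2}, hence the initial topology for the restrictions to $V^*(A)$ of the functionals of form (I), whose linear span is exactly $V^*(A)_\bullet$ (see \cref{3} and \cref{26}); likewise the ultraweak topology on $V^*(A)$ is the initial topology for the restrictions of the trace-class functionals, whose span is $V^*(A)_*$. For a topology of the form $\sigma(X,F)$, the continuous dual is precisely the linear span of the generating family $F$, and this characterization uses no choice. Consequently the two topologies coincide as soon as $V^*(A)_\bullet = V^*(A)_*$ as subspaces of the algebraic dual of $V^*(A)$, and it is this equality of dual spaces that I would establish.

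One inclusion is immediate. By \cref{4} the continuum-weak topology is finer than the ultraweak topology, so every ultraweakly continuous functional on $V^*(A)$ is continuum-weakly continuous; that is, $V^*(A)_* \subseteq V^*(A)_\bullet$. (Equivalently, this is \cref{13.5} applied to $V^*(A)$ and its ultraweak closure.)

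For the reverse inclusion I would first invoke \cref{11} to reduce to states: every continuum-weakly continuous functional on $V^*(A)$ is a linear combination of continuum-weakly continuous states. By \cref{41}, each continuum-weakly continuous state on $V^*(A)$ is a vector state $\<\xi_\mu | \cdot\, \xi_\mu\>$ in the universal representation. Such a vector functional is implemented by a rank-one, hence trace-class, operator, so it is ultraweakly continuous. Taking linear combinations, every continuum-weakly continuous functional on $V^*(A)$ is ultraweakly continuous, giving $V^*(A)_\bullet \subseteq V^*(A)_*$. Combining the two inclusions yields $V^*(A)_\bullet = V^*(A)_*$, and therefore the continuum-weak and ultraweak topologies on $V^*(A)$ agree.

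The substantive content is carried entirely by \cref{41}: once the continuum-weakly continuous states are identified as honest vector states in the universal representation, their ultraweak continuity is automatic, and the passage from coincidence of the continuous duals to coincidence of the topologies is a standard fact about weak topologies. The only point deserving care is therefore the bookkeeping that the subspace topologies really are the weak topologies generated by $V^*(A)_\bullet$ and $V^*(A)_*$, i.e.\ the elementary identity that the subspace topology induced by $\sigma(X,F)$ on a subspace $Y \subseteq X$ is $\sigma(Y, F|_Y)$; I do not expect this to pose any genuine obstacle.
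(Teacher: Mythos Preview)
Your proposal is correct and follows precisely the paper's approach: the paper's proof consists of the single line ``This is a corollary of \cref{11} and \cref{41},'' and your argument is exactly the unpacking of why those two lemmas give the result---reduce continuum-weakly continuous functionals to states via \cref{11}, identify those states as vector states via \cref{41}, hence ultraweakly continuous, and conclude by the standard equivalence of weak topologies with the same continuous dual.
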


\begin{proof}
This is a corollary of \cref{11} and \cref{41}.
\end{proof}

\begin{proposition}\label{43}
If $A$ is unital, and $x \in W^*(A)$ is self-adjoint, then $x \in V^*(A)$ iff the function $\hat x: \S(A) \To \RR$ defined by $\hat x: \mu \mapsto \langle \xi_\mu | x \xi_\mu \rangle$ satisfies
\begin{equation*}\hat x \left( \int \mu_t \,dt \right) = \int \hat x(\mu_t)\, dt
\end{equation*}
for all indexed families $(\mu_t \in \S(A)\: t \in \II)$.
\end{proposition}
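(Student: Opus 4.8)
The plan is to read off both implications from the correspondence between self-adjoint elements and continuum-affine functions established in \cref{29}, transported along the isometric isomorphisms of \cref{41}. Throughout, for $\mu \in \S(A)$ I write $\tilde\mu$ for the corresponding continuum-weakly continuous state on $V^*(A)$, so that $\hat x(\mu) = \tilde\mu(x)$ whenever $x \in V^*(A)$, and I read $\int \mu_t\,dt$ as the functional $a \mapsto \int_0^1 \mu_t(a)\,dt$ on $A$, which is positive and sends the unit to $1$, hence is a state.

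The crucial preliminary observation is that integration of states is compatible with the identification $\S(A) \iso \S_\bullet(V^*(A))$ of \cref{41}. Fix a family $(\mu_t)$ and set $\nu = \int_0^1 \mu_t\,dt \in \S(A)$. The continuum-weakly continuous state $\int_0^1 \tilde\mu_t\,dt$ produced by \cref{9} (applicable since $\|\tilde\mu_t\| = 1$ is integrable on $\II$) and the state $\tilde\nu$ corresponding to $\nu$ both send $\gamma_\S(a)$ to $\int_0^1 \mu_t(a)\,dt$ for every $a \in A$; since $\gamma_\S(A)$ is continuum-weakly dense in $V^*(A)$ and both functionals are continuum-weakly continuous, they coincide, i.e. $\tilde\nu = \int_0^1 \tilde\mu_t\,dt$. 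This is the one point requiring genuine care, because the continuum-affine condition of \cref{29} integrates states inside $\S_\bullet(V^*(A))$, whereas the condition in the present statement integrates states of $A$.

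For the forward implication, suppose $x \in V^*(A)$. Then $\hat x(\mu) = \tilde\mu(x)$, and by the preliminary observation together with the definition of integration of continuum-weakly continuous functionals,
\begin{equation*}
\hat x\left(\int_0^1 \mu_t\,dt\right) = \left(\int_0^1 \tilde\mu_t\,dt\right)(x) = \int_0^1 \tilde\mu_t(x)\,dt = \int_0^1 \hat x(\mu_t)\,dt,
\end{equation*}
so $\hat x$ is continuum-affine. For the converse, suppose $\hat x$ is continuum-affine. By the preliminary observation the function $f$ on $\S_\bullet(V^*(A))$ determined by $f(\tilde\mu) = \hat x(\mu)$ is continuum-affine in the sense of \cref{29}, so $f = \widehat{x_0}$ for a unique self-adjoint $x_0 \in V^*(A)$. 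Unwinding the identifications gives $\langle \xi_\mu | x \xi_\mu\rangle = \hat x(\mu) = f(\tilde\mu) = \tilde\mu(x_0) = \langle \xi_\mu | x_0 \xi_\mu\rangle$ for every $\mu \in \S(A)$. Both $x$ and $x_0$ are self-adjoint elements of $W^*(A)$, and by \cref{41} the vector states $\langle \xi_\mu | \cdot \xi_\mu\rangle$ exhaust $\S_*(W^*(A))$, the ultraweakly continuous states, which separate the self-adjoint elements of the von Neumann algebra $W^*(A)$. Hence $x = x_0 \in V^*(A)$.

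The main obstacle is thus not algebraic but a matter of bookkeeping: confirming that ``continuum-affine'' as phrased here, via integration of states of $A$, coincides with ``continuum-affine'' as used in \cref{29}, via integration inside the continuum predual. That is precisely the continuum-weak density argument isolated above. Once this compatibility is in hand, \cref{29} supplies the element $x_0 \in V^*(A)$ realizing $\hat x$, and \cref{41} forces $x = x_0$ through the separation of self-adjoint elements by normal states.
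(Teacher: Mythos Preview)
Your proof is correct and follows essentially the same route as the paper's: both establish that the identification $\S(A)\cong\S_\bullet(V^*(A))$ of \cref{41} intertwines integration (your ``preliminary observation''), then invoke \cref{29}, and finally use injectivity of $x\mapsto\hat x$ on $W^*(A)_{sa}$ (the paper phrases this as isometry, you phrase it as separation by normal states) to conclude $x=x_0$. Your write-up is simply more explicit about the bookkeeping.
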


\begin{proof}
The isomorphism $\S(A) \iso \S_\bullet(V^*(A))$ in \cref{41} respects integration in the sense that $\mu(a) = \int_0^1 \mu_t(a) \, dt$ for all $a \in A$ iff $\<\xi_\mu| x |\xi_\mu\> = \int_0^1 \< \xi_{\mu_t}| x \xi_{\mu_t} \> \, dt$ for all $x \in V^*(A)$, whenever $(\mu_t\:t \in \II)$ is a family of states, because the functional $x \mapsto \<\xi_\mu| x |\xi_\mu\> - \int_0^1 \< \xi_{\mu_t}| x \xi_{\mu_t} \> \, dt$ is continuum-weakly continuous. Thus, identifying $\S(A)$ with $\S_\bullet(V^*(A))$, we can apply \cref{29} to find that $x \mapsto \hat x$ is an isometric isomorphism of $V^*(A)\sa$ onto the continuum-affine functions $\S(A) \To \RR$. Of course, $x \mapsto \hat x$ is isometric on $W^*(A)\sa$, so the proposition follows.
\end{proof}

\begin{remark}\label{44}
If $A$ is a unital \emph{separable} C*-algebra, then $a \mapsto \hat a$ is an isometric isomorphism between $A_{sa}$ and the space of continuous affine functions on $\S(A)$. Furthermore, since $\S(A) \preccurlyeq \RR$, every probability measure on $\S(A)$ is a pushforward of Lebesgue measure on $\II$, so $x \in V^*(A)$ iff $\hat x$ is strongly affine \cite{LudvikSpurny}*{section 2}.
\end{remark}

\begin{remark}\label{44.5}
In \cref{43}, we insist that $A$ be unital because the state space of a nonunital C*-algebra need not be closed under integration. For example, the state space of $c_0(\II)$ is not closed under integration because the integral of its homomorphic states over Lebesgue measure is zero.
\end{remark}

\begin{theorem}\label{45}
Let $A$ be a C*-algebra, let $E$ be a V*-algebra, and let $\rho: A \To E$ be a $*$-homomorphism. There exists a unique continuum-weakly continuous $*$-homomorphism $\pi: V^*(A) \To E$ such that $\pi\circ \gamma_\S = \rho$:
$$\begin{tikzcd}
A \arrow{r}{\gamma_\S} \arrow{rd}[swap]{\rho}
& V^*(A)\arrow[dotted]{d}{!}[swap]{\pi} \\
& E
\end{tikzcd}
$$
\end{theorem}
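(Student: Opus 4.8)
The plan is to build $\pi$ by factoring $\rho$ through the enveloping \emph{von Neumann} algebra and then cutting down to $V^*(A)$, exploiting the fact that ultraweak continuity automatically yields continuum-weak continuity. Write $E \subsetof \B(\K)$ and let $M = E''$ be the von Neumann algebra it generates inside $\B(\K)$. Since $E$ is a V*-algebra, it is continuum-weakly closed in $\B(\K)$, hence continuum-weakly closed in $M$, and the continuum-weak topology on $E$ is the subspace topology inherited from $M$.

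First I would produce the von Neumann extension. Composing $\rho$ with the inclusion $E \hookrightarrow M$ gives a $*$-homomorphism $A \To M$ into a von Neumann algebra, so by the universal property of the enveloping von Neumann algebra (\cref{v15}) there is a unique ultraweakly continuous $*$-homomorphism $\tilde \rho\: W^*(A) \To M$ with $\tilde\rho \circ \gamma_\S = \rho$. By \cref{13.5}, an ultraweakly continuous $*$-homomorphism between von Neumann algebras is automatically continuum-weakly continuous, so $\tilde\rho$ is continuum-weakly continuous.

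Next I would restrict $\tilde\rho$ to $V^*(A)$ and verify that its image lands in $E$. Note that $V^*(A) \subsetof W^*(A)$, since the continuum-weak topology is finer than the ultraweak topology by \cref{4}, so the restriction makes sense. Because $V^*(A) = \overline{\gamma_\S(A)}$ by \cref{40}, continuity of $\tilde\rho$ gives $\tilde\rho(\overline{\gamma_\S(A)}) \subsetof \overline{\tilde\rho(\gamma_\S(A))} = \overline{\rho(A)}$, where the closures are continuum-weak. As $\rho(A) \subsetof E$ and $E$ is continuum-weakly closed, $\overline{\rho(A)} \subsetof E$. Hence $\pi = \tilde\rho|_{V^*(A)}$ is a continuum-weakly continuous $*$-homomorphism $V^*(A) \To E$ with $\pi \circ \gamma_\S = \tilde\rho \circ \gamma_\S = \rho$. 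This step—matching the continuum-weak closure on the domain against the continuum-weak closedness of $E$—is the crux of the argument, and it is exactly where $V^*$ rather than $W^*$ is forced: restricting $\tilde\rho$ to $W^*(A)$ would only be guaranteed to land in $E'' = M$, not in $E$.

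Finally, for uniqueness I would argue by continuum-weak density. If $\pi_1, \pi_2\: V^*(A) \To E$ are continuum-weakly continuous $*$-homomorphisms with $\pi_i \circ \gamma_\S = \rho$, then $\pi_1 - \pi_2$ is continuum-weakly continuous (operator addition is jointly continuum-weakly continuous, \cref{5}) and vanishes on $\gamma_\S(A)$. The continuum-weak topology on $E$ is finer than the ultraweak topology and hence Hausdorff, so $\{x \in V^*(A) : (\pi_1 - \pi_2)(x) = 0\}$ is continuum-weakly closed; it contains the continuum-weakly dense set $\gamma_\S(A)$, so it is all of $V^*(A) = \overline{\gamma_\S(A)}$. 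Thus $\pi_1 = \pi_2$, completing the plan.
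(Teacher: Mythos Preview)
Your proof is correct and follows essentially the same route as the paper: extend $\rho$ to an ultraweakly continuous $*$-homomorphism $W^*(A)\to E''$ via \cref{v15}, invoke \cref{13.5} to upgrade to continuum-weak continuity, and use this continuity together with $V^*(A)=\overline{\gamma_\S(A)}$ and the continuum-weak closedness of $E$ to see that the restriction lands in $E$. The paper compresses your image argument into a single clause and declares uniqueness trivial, but the content is identical.
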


\begin{proof}
The universal property of the enveloping von Neumann algebra $W^*(A) = \gamma_\S(A)''$ yields an ultraweakly continuous $*$-homomorphism $\pi\: W^*(A) \To E''$ such that $\pi \circ \gamma_\S = \rho$. By \cref{13.5}, $\pi$ is also continuum-weakly continuous, which implies that $\pi(V^*(A)) \subsetof E$; thus $\pi$ makes the above diagram commute. The uniqueness of $\pi$ is trivial.
\end{proof}

\begin{remark}\label{46}
It follows from \cref{15} that every V*-algebra is closed under limits of ultraweakly convergent sequences. Therefore, $V^*(A)$ includes Davies's $\sigma$-envelope $A^\sim$ \cite{Davies68}, and Pedersen's enveloping Borel $*$-algebra $\Bb(A)$ \cite{Pedersen72}.
\end{remark}

\section{The atomic representation}

\begin{remark}\label{47}
In the Chang model, a C*-algebra may fail to have a pure state even if it has a faithful state. For example, Lebesgue measure $\lambda$ yields a faithful state on the von Neumann algebra $L^\infty(\II, \lambda)$, but it is easy to show that this C*-algebra has no pure states.
\end{remark}

\begin{remark}\label{48}
If $A$ is a \emph{separable} C*-algebra, then every state $\mu \in \S(A)$ is a mixture of pure states, in the sense that there is a probability measure $m$ on the pure state space $\partial \S(A)$ such that $\mu(a) = \int_{\nu \in \partial \S} \nu(a)\, dm$ for all $a \in A$. This is a consequence of Choquet's theorem, which yields such a Borel probability measure on $\partial\S(A)$, which of course extends uniquely to a totally defined probability measure, since $\partial\S(A)$ is Polish; see \ref{v18}, \ref{v3.2}, and \cite{Pedersen79} proposition 4.3.2. We remark that Choquet's theorem has a constructive proof \cite{Pestman99}. Clearly $\partial \S(A) \preccurlyeq \S(A) \preccurlyeq \II$, so every state on $A$ is the integral of a continuum of pure states, in the sense that there is a function $t \mapsto \varphi_t \in \partial \S(A)$ such that $\mu(a) = \int_0^1 \varphi_t(a)\, dt$ for all $a \in A$.
\end{remark}

\begin{remark}
The \underline{atomic representation} $\gamma_\partial$ of a C*-algebra $A$ is the direct sum of all of its GNS representations for pure states, i. e., $\gamma_\partial \: a \mapsto \bigoplus_{\mu \in \partial \S(A)} \gamma_\mu(a)$. If $A$ is separable, then by \cref{48}, its atomic representation is faithful.
\end{remark}

\begin{lemma}\label{49}
Let $A$ be a C*-algebra such that every state $\mu \in \S(A)$ is the integral of a continuum of states in $\partial \S(A)$. Then the continuum-weakly continuous states on $\overline{\gamma_\partial(A)}$ are in bijective correspondence with the states of $A$ via the assignment $\varphi \mapsto \varphi \circ \gamma_\partial$.
\end{lemma}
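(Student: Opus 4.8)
The plan is to analyze the map $\Phi\colon \varphi \mapsto \varphi\circ\gamma_\partial$ directly, writing $E = \overline{\gamma_\partial(A)}$ and exploiting that $\gamma_\partial$ is a \emph{nondegenerate} representation on $\H_\partial = \bigoplus_{\nu \in \partial\S(A)} \H_\nu$. First I would check that $\Phi$ really takes a continuum-weakly continuous state $\varphi$ on $E$ to a \emph{state} of $A$, not merely to a positive functional of norm at most one. By \cref{12} I may write $\varphi(x) = \int_0^1 \langle \zeta_t | x \zeta_t\rangle\, dt$ for some $(\zeta_t) \in \L^2(\II, \H_\partial)$ with $\int_0^1 \|\zeta_t\|^2\, dt = 1$. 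Taking an increasing approximate unit $(e_\lambda)$ of $A$ (see \ref{v10}), nondegeneracy gives $\gamma_\partial(e_\lambda) \nearrow 1$ strongly, so $\langle\zeta_t|\gamma_\partial(e_\lambda)\zeta_t\rangle \nearrow \|\zeta_t\|^2$ pointwise; dominated convergence then yields $\varphi(\gamma_\partial(e_\lambda)) \to \int_0^1 \|\zeta_t\|^2\, dt = 1$. Since the norm of the positive functional $\varphi\circ\gamma_\partial$ is the limit of its values along any approximate unit, $\|\varphi\circ\gamma_\partial\| = 1$ and $\varphi\circ\gamma_\partial \in \S(A)$.

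Injectivity of $\Phi$ is then immediate: if two continuum-weakly continuous states agree after composing with $\gamma_\partial$, then they agree on $\gamma_\partial(A)$, which is continuum-weakly dense in $E$, so by continuum-weak continuity they coincide on all of $E$.

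The heart of the argument is surjectivity, and this is where the hypothesis enters. Given $\mu \in \S(A)$, the hypothesis furnishes a family $(\varphi_t \in \partial\S(A)\colon t \in \II)$ with $\mu(a) = \int_0^1 \varphi_t(a)\, dt$ for all $a \in A$. Each pure state $\varphi_t$ is one of the summands defining $\gamma_\partial$, so its GNS cyclic vector $\xi_{\varphi_t}$ sits canonically in $\H_\partial$ and satisfies $\varphi_t(a) = \langle \xi_{\varphi_t} | \gamma_\partial(a)\, \xi_{\varphi_t}\rangle$ with $\|\xi_{\varphi_t}\|^2 = 1$; because $\varphi_t$ is a state of $A$, the vector functional $\langle\xi_{\varphi_t}|\cdot\,\xi_{\varphi_t}\rangle$ is a continuum-weakly continuous state of $E$ of norm one. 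I would then set $\varphi(x) = \int_0^1 \langle \xi_{\varphi_t} | x\, \xi_{\varphi_t}\rangle\, dt$ for $x \in E$. Since $t \mapsto \|\langle\xi_{\varphi_t}|\cdot\,\xi_{\varphi_t}\rangle\|$ is the constant function $1$, hence integrable, \cref{9} shows $\varphi$ is continuum-weakly continuous; it is positive, bounded above in norm by $\int_0^1 1\, dt = 1$ and below by its values on $\gamma_\partial(A)$, whose supremum is $\|\mu\| = 1$, so $\varphi$ is a continuum-weakly continuous state of $E$. Finally $\varphi(\gamma_\partial(a)) = \int_0^1 \varphi_t(a)\, dt = \mu(a)$, whence $\Phi(\varphi) = \mu$, establishing the bijection.

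I expect the surjectivity step to be the main obstacle, and within it the verification that the assembled $\varphi$ is genuinely a state of $E$ rather than merely a positive functional: this is exactly the point at which one must combine the continuum disintegration supplied by the hypothesis, the identification of pure states as vector states of the atomic representation, and the closure of the continuum-weakly continuous functionals under integration (\cref{9}). The well-definedness of $\Phi$ demands the same care in the non-unital case, since $E$ need not contain a unit on which to read off the norm, and the approximate-unit computation in the first paragraph is what circumvents this.
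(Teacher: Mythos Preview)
Your argument follows the paper's approach exactly: injectivity by continuum-weak density of $\gamma_\partial(A)$ in $E$, and surjectivity by writing each state as an integral of pure states, each of which is canonically a vector state for $\gamma_\partial$. The paper's own proof is two sentences and does not separately verify that $\varphi\circ\gamma_\partial$ has norm exactly $1$; you are being more careful than the paper on this point.

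That said, your verification of well-definedness has a gap. The step ``dominated convergence then yields $\varphi(\gamma_\partial(e_\lambda))\to 1$'' is not justified: the approximate unit from \ref{v10} is the full directed set of positive contractions, and monotone and dominated convergence fail for general uncountable nets. (For instance, index by finite subsets $F\subset\II$ under inclusion and set $g_F=1_F$: then $g_F\nearrow 1$ pointwise while $\int g_F\,dt=0$ for every $F$.) When $A$ is not $\sigma$-unital there is no reason to expect a cofinal sequence in $(e_\lambda)$, so the passage from pointwise convergence $\langle\zeta_t|\gamma_\partial(e_\lambda)\zeta_t\rangle\nearrow\|\zeta_t\|^2$ to convergence of the integrals does not go through as written.

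The simplest repair is to postpone well-definedness until after you have established injectivity and surjectivity, and then deduce it from those. Given a continuum-weakly continuous state $\varphi$ on $E$, the positive functional $\varphi\circ\gamma_\partial$ has some norm $c\le 1$. If $c=0$ then $\varphi$ vanishes on the continuum-weakly dense set $\gamma_\partial(A)$, hence on $E$, contradicting $\|\varphi\|=1$. If $0<c<1$, set $\mu=c^{-1}(\varphi\circ\gamma_\partial)\in\S(A)$; your surjectivity construction produces a continuum-weakly continuous state $\psi$ with $\psi\circ\gamma_\partial=\mu$, whence $c\psi$ and $\varphi$ are continuum-weakly continuous functionals agreeing on $\gamma_\partial(A)$, so $c\psi=\varphi$ by density, forcing $\|\varphi\|=c<1$, a contradiction. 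Thus $c=1$ and $\varphi\circ\gamma_\partial\in\S(A)$.
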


\begin{proof}
The assignment is injective because $\gamma_\partial(A)$ is trivially dense in $\overline{\gamma_\partial(A)}$. The assignment is surjective because each state of $A$ is the integral of a continuum of pure states, each of which is canonically a vector state of the representation $\gamma_\partial$.
\end{proof}

\begin{lemma}\label{50}
Let $E$ be a V*-algebra, and let $\mu:  E \To \CC$ be a continuum-weakly continuous state with GNS representation $(\gamma_\mu, \H_\mu, \xi_\mu)$. The map $\gamma_\mu$ is continuum-weakly continuous. Furthermore, if $A \subsetof E$ is a continuum-weakly dense C*-subalgebra, then $\overline{\gamma_\mu(A) \xi_\mu} = \H_\mu$.
\end{lemma}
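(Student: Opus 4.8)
The plan is to establish the two assertions in turn, with the continuum-weak continuity of $\gamma_\mu$ doing the work for the density claim.

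\emph{Continuum-weak continuity of $\gamma_\mu$.} I would regard $\gamma_\mu$ as a contractive linear map from $E$ into the von Neumann algebra $\B(\H_\mu)$, which is a V*-algebra by \cref{7}, and apply \cref{13}: it suffices to check that for each $\zeta \in \H_\mu$ the vector functional $a \mapsto \<\zeta | \gamma_\mu(a)\zeta\>$ is continuum-weakly continuous on $E$. First I would treat the dense set of vectors $\zeta = \gamma_\mu(b)\xi_\mu$ with $b \in E$. For such $\zeta$,
$$\<\zeta | \gamma_\mu(a)\zeta\> = \<\xi_\mu | \gamma_\mu(b^* a b)\xi_\mu\> = \mu(b^* a b),$$
and writing $\mu$ in the form $\mu = \int_0^1 \<\eta_t | \cdot\, \xi_t\>\,dt$ permitted by \cref{3}, I get $\mu(b^* a b) = \int_0^1 \<b\eta_t | a\,(b\xi_t)\>\,dt$, which is again of the form (I) of \cref{2} because $\|b\eta_t\|^2 \leq \|b\|^2\|\eta_t\|^2$ and $\|b\xi_t\|^2 \leq \|b\|^2\|\xi_t\|^2$ preserve the integrability hypotheses. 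Hence the functional is continuum-weakly continuous for these $\zeta$.

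\emph{Passing to arbitrary vectors.} For a general $\zeta \in \H_\mu$, cyclicity of $\xi_\mu$ lets me choose $\zeta_n = \gamma_\mu(b_n)\xi_\mu$ with $\zeta_n \To \zeta$ in norm. The associated functionals then converge in the functional norm, since for $\|a\| \leq 1$ the Cauchy--Schwarz estimate gives
$$|\<\zeta_n | \gamma_\mu(a)\zeta_n\> - \<\zeta | \gamma_\mu(a)\zeta\>| \leq (\|\zeta_n\| + \|\zeta\|)\,\|\zeta_n - \zeta\|,$$
using $\|\gamma_\mu(a)\| \leq 1$. The limit functional $a \mapsto \<\zeta | \gamma_\mu(a)\zeta\>$ is therefore continuum-weakly continuous by \cref{10}, completing the first claim via \cref{13}.

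\emph{Density of $\gamma_\mu(A)\xi_\mu$.} Here I would argue by orthogonal complements. Suppose $\zeta \in \H_\mu$ is orthogonal to $\gamma_\mu(A)\xi_\mu$, and consider $\psi\: x \mapsto \<\zeta | \gamma_\mu(x)\xi_\mu\>$. Since $\gamma_\mu$ is continuum-weakly continuous and $y \mapsto \<\zeta | y\,\xi_\mu\>$ is a continuum-weakly continuous functional on $\B(\H_\mu)$ (of the form (I) with constant families), the composite $\psi$ is continuum-weakly continuous on $E$. By hypothesis $\psi$ vanishes on $A$, which is continuum-weakly dense in $E$, so $\psi$ vanishes on all of $E$. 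Thus $\zeta$ is orthogonal to the linear subspace $\gamma_\mu(E)\xi_\mu$, which is dense in $\H_\mu$ by cyclicity (\ref{v13}), forcing $\zeta = 0$. Hence the only vector orthogonal to $\gamma_\mu(A)\xi_\mu$ is $0$, so its norm closure is all of $\H_\mu$.

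The main obstacle is the first claim: reducing to vector states via \cref{13} is routine, but the vector states attached to arbitrary $\zeta$ are not directly expressible through $\mu$, so the real content is the density-and-approximation step combined with the norm-limit stability of \cref{10}. Once $\gamma_\mu$ is known to be continuum-weakly continuous, the density assertion is a short orthogonality argument driven by the continuum-weak density of $A$.
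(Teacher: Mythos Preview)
Your proof is correct and follows essentially the same route as the paper: approximate vectors in $\H_\mu$ by those of the form $\gamma_\mu(b)\xi_\mu$, observe that the resulting functionals $x \mapsto \mu(b^*xb)$ (or $\mu(f^*xe)$ in the paper's two-vector version) are continuum-weakly continuous, pass to the limit via \cref{10}, and conclude via \cref{13}; the density argument by orthogonality is identical. The only cosmetic difference is that you check vector \emph{states} (exactly what \cref{13} asks for), whereas the paper checks general vector functionals $\langle\zeta|\cdot|\eta\rangle$.
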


\begin{proof}
Let $\varphi: \B(\H_\mu) \To \CC$ be a vector functional, i. e., $\varphi: y \mapsto \langle \zeta | y \eta \rangle$ for some $\eta, \zeta \in \H_\mu$. Since $\xi_\mu$ is a cyclic vector, there are sequences $(e_n)$ and $(f_n)$ from $E$ such that $\gamma_\mu(e_n) \xi_\mu \To \eta$ and $\gamma_\mu(f_n) \xi_\mu \To \zeta$. By elementary functional analysis, $$\langle \gamma_\mu(f_n) \xi_\mu| \cdot | \gamma_\mu(e_n) \xi_\mu \rangle \To \langle \zeta | \cdot | \eta \rangle,$$ in the norm topology, and therefore
$$ \langle \gamma_\mu(f_n) \xi_\mu | \gamma_\mu(\cdot) \gamma_\mu(e_n) \xi_\mu \rangle\To \langle \zeta | \gamma_\mu(\cdot) \eta \rangle $$
in the norm topology.
For each $n$, the functional $x \mapsto  \langle \gamma_\mu(f_n) \xi_\mu | \gamma_\mu(x) \gamma_\mu(e_n) \xi_\mu \rangle = \mu(f_n^*xe_n)$ is continuum-weakly continuous. Thus, $\langle \zeta | \gamma_\mu(\cdot) \eta \rangle$ is the limit of a sequence of continuum-weakly continuous functionals in the norm topology, and is therefore itself continuum-weakly continuous, by \cref{10}. We have shown that the pullback of every vector functional along $\gamma_\mu$ is continuum-weakly continuous; by \cref{13}, $\gamma_\mu$ is itself continuum-weakly continuous.

Suppose that $A \subsetof E$ is a continuum-weakly dense C*-subalgebra, and $\eta \in \H_\mu$ is a vector such that $\< \eta | \gamma_\mu(a) \xi_\mu \> =0$ for all $a \in A$. We have already established that the functional $x \mapsto \<\eta | \gamma_\mu(x) \xi_\mu\>$ is continuum-weakly continuous so it vanishes for all $x \in E$. We conclude that $\eta =0$. Therefore, $\overline{\gamma_\mu(a) \xi_\mu} = \H_\mu$.
\end{proof}

\begin{proposition}[cf. \cref{45}]\label{51}
Let $A$ be a C*-algebra such that every state $\mu \in \S(A)$ is the integral of a continuum of states in $\partial S(A)$, e. g., a separable C*-algebra. Then $\gamma_\partial$ factors uniquely through $\gamma_\S$ via a continuum-weakly homeomorphic $*$-isomorphism:
$$\begin{tikzcd}
A \arrow{r}{\gamma_\S} \arrow{rd}[swap]{\gamma_\partial}
& V^*(A)\arrow[dotted, leftrightarrow]{d}{!}[swap]{\iso}\\
& \overline{\gamma_\partial(A)}
\end{tikzcd}
$$
\end{proposition}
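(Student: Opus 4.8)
The plan is to apply the universal property of \cref{45} to the $*$-homomorphism $\gamma_\partial\: A \To \overline{\gamma_\partial(A)}$, whose codomain is a V*-algebra. This yields a unique continuum-weakly continuous $*$-homomorphism $\pi\: V^*(A) \To \overline{\gamma_\partial(A)}$ with $\pi \circ \gamma_\S = \gamma_\partial$. This $\pi$ is the factoring map, and its uniqueness is precisely the uniqueness clause of \cref{45}; it remains only to show that $\pi$ is a continuum-weakly homeomorphic $*$-isomorphism. I would do this by passing to the canonical continuum preduals, showing that the pullback $\pi_\bullet\: \varphi \mapsto \varphi \circ \pi$ is a linear bijection from $\overline{\gamma_\partial(A)}_\bullet$ onto $V^*(A)_\bullet$, and then dualizing via \cref{27}.

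First I would identify both continuum-weakly continuous state spaces with $\S(A)$. By \cref{49} the assignment $\varphi \mapsto \varphi \circ \gamma_\partial$ is a bijection from $\S_\bullet(\overline{\gamma_\partial(A)})$ onto $\S(A)$, while by \cref{41} each state of $A$ extends uniquely to a continuum-weakly continuous state of $V^*(A)$. For a continuum-weakly continuous state $\varphi$ on $\overline{\gamma_\partial(A)}$, the functional $\varphi \circ \pi$ is a continuum-weakly continuous state on $V^*(A)$: it is positive and contractive, and it restricts along $\gamma_\S$ to the state $\varphi \circ \gamma_\partial$, which forces norm one. Moreover $(\varphi \circ \pi)\circ \gamma_\S = \varphi \circ \gamma_\partial$, so by the uniqueness in \cref{41}, $\varphi \circ \pi$ is exactly the state on $V^*(A)$ associated to $\varphi \circ \gamma_\partial \in \S(A)$. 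Hence $\pi_\bullet$ restricts to a bijection between the two state spaces, compatible with their identifications with $\S(A)$. Since every continuum-weakly continuous functional is a linear combination of continuum-weakly continuous states by \cref{11}, and $\pi_\bullet$ is linear, $\pi_\bullet$ maps $\overline{\gamma_\partial(A)}_\bullet$ onto $V^*(A)_\bullet$. Injectivity of $\pi_\bullet$ is immediate: if $\varphi \circ \pi = 0$, then $\varphi$ annihilates $\pi(V^*(A)) \supseteq \gamma_\partial(A)$, which is continuum-weakly dense, so $\varphi = 0$.

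With $\pi_\bullet$ a linear bijection of continuum preduals, I would conclude by duality. A direct computation shows that $\pi_\bullet$ respects integration of continuum-weakly continuous functionals, as does its inverse; hence pre-composition with $\pi_\bullet$ carries continuum-linear functionals to continuum-linear functionals in both directions. Under the isometric identifications of \cref{27}, the map $f \mapsto f \circ \pi_\bullet$ on continuum-linear functionals corresponds to $\pi$ itself, so since $\pi_\bullet$ is a bijection, this map, and therefore $\pi$, is bijective. A bijective $*$-homomorphism of C*-algebras is an isometric $*$-isomorphism. Finally, to see that $\pi$ is a homeomorphism, I would invoke \cref{13}: the pullback along $\pi^{-1}$ of any vector state $\psi$ on $V^*(A)$ equals $(\pi_\bullet)\inv(\psi)$, which lies in $\overline{\gamma_\partial(A)}_\bullet$ and is therefore continuum-weakly continuous, so $\pi^{-1}$ is continuum-weakly continuous. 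The main obstacle here is surjectivity of $\pi$: continuum-weak continuity does not preserve closedness, so one cannot argue merely from the continuum-weak density of $\gamma_\partial(A)$ in $\overline{\gamma_\partial(A)}$, and it is the predual bijection together with \cref{27} that supplies it.
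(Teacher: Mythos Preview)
Your argument is correct, but it takes a genuinely different route from the paper's. The paper does not prove that the forward map $\pi\colon V^*(A)\to\overline{\gamma_\partial(A)}$ is bijective by duality; instead it constructs the inverse map directly. Concretely, the paper considers the representation $\gamma_\bullet = \bigoplus_{\mu\in\S_\bullet(\overline{\gamma_\partial(A)})}\gamma_\mu$ of $\overline{\gamma_\partial(A)}$, uses \cref{50} to see that each summand is continuum-weakly continuous, and uses \cref{49} to identify $\S_\bullet(\overline{\gamma_\partial(A)})$ with $\S(A)$, so that $\gamma_\bullet\circ\gamma_\partial$ is identified with $\gamma_\S$. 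Taking continuum-weak closures yields a continuum-weakly continuous $*$-homomorphism $\overline{\gamma_\partial(A)}\to V^*(A)$, and then the standard ``two objects with the same universal property'' argument finishes.

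Your approach trades \cref{50} for \cref{27}: you show $\pi_\bullet$ is a bijection of canonical continuum preduals (via \cref{41}, \cref{49}, and \cref{11}), and then dualize to recover bijectivity of $\pi$. This is a clean functional-analytic alternative; the one place to be careful is verifying that $(\pi_\bullet)^{-1}$ carries integrable families to integrable families, so that precomposition by $(\pi_\bullet)^{-1}$ preserves continuum-linearity. In the Chang model this is automatic, since every linear map between Banach spaces is bounded (\ref{v4}), and $E_\bullet$ is a Banach space by \cref{10}. The paper's route avoids this predual machinery entirely by exhibiting the inverse as an honest representation; yours avoids having to handle GNS representations of $\overline{\gamma_\partial(A)}$ and the density claim in \cref{50}.
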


\begin{proof}
The enveloping V*-algebra is defined by its universal property, up to canonical isomorphism; this is established by a straightforward category-theoretic argument. In order to adapt this argument, we need only to prove the two claims diagramed below:
\begin{enumerate}
\item 
$\begin{tikzcd}
A \arrow{r}{\gamma_\partial} \arrow{rd}[swap]{\gamma_\partial}
& \overline{\gamma_\partial(A)} \arrow{d}{!}[swap]{id} \\
& \overline{\gamma_\partial(A)}
\end{tikzcd}
$
\item 
$\begin{tikzcd}
A \arrow{r}{\gamma_\partial} \arrow{rd}[swap]{\gamma_\S}
& \overline{\gamma_\partial(A)}\arrow[dotted]{d}{!} \\
& V^*(A)
\end{tikzcd}
$
\end{enumerate}
The first claim, that $\gamma_\partial$ factors through itself uniquely, via the identity, is trivial. To prove the second claim, that $\gamma_\S$ factors through $\gamma_\partial$ uniquely, we work with the universal continuum-weakly continuous representation of $\overline{\gamma_\partial(A)}$, i. e.,
$$\gamma_{\bullet}\: x \mapsto \bigoplus_{\mu \in \S_\bullet} \gamma_\mu(x),$$
where $\S_\bullet$ denotes the space of continuum-weakly continuous states $\overline{\gamma_\partial(A)} \To \CC$. Lemma \ref{49} and \cref{50}, above, now imply that the following diagram commutes:
$$
\begin{tikzcd}
A \arrow{r}{\gamma_\partial} \arrow{d}{\gamma_\S}
&
\overline{\gamma_\partial(A)} 
\arrow{d}{\gamma_{\bullet}}\\
\bigoplus_{\mu \in \S} \B(\H_\mu)
\arrow[leftrightarrow]{r}{\iso}
&
\bigoplus_{\mu \in \S_\bullet} \B(\H_\mu)
\end{tikzcd}
$$
Of course, $\gamma_{\bullet}(\overline{\gamma_\partial(A)}) \subsetof \overline{\gamma_{\bullet}(\gamma_\partial(A))}$, so taking continuum-weak closures of images we find that the following diagram commutes:
$$
\begin{tikzcd}
A \arrow{r}{\gamma_\partial} \arrow{d}{\gamma_\S}
&
\overline{\gamma_\partial(A)} 
\arrow{d}{\gamma_{\bullet}}\\
\overline{\gamma_\S(A)}
\arrow[leftrightarrow]{r}{\iso}
&
\overline{\gamma_{\bullet}(\gamma_\partial(A))}
\end{tikzcd}
$$
Thus $\gamma_\S\: A \To \overline{\gamma_\S(A)} = V^*(A)$  factors through $\gamma_\partial$ via a continuum-weakly continuous $*$-homomorphism. The uniqueness of the factorization is trivial.
\end{proof}

\begin{lemma}\label{52}
Let $T$ be a locally compact Polish space. The continuum-weak closure of $C_0(T) \subsetof \B(\ell^2(T))$ is $\ell^\infty(T)$.
\end{lemma}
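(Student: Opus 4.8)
The plan is to identify $\ell^\infty(T)$ with the algebra of multiplication operators on $\ell^2(T)$ (counting measure), to show that the continuum-weak topology collapses to the ultraweak topology on this algebra, and then to reduce the lemma to a double-commutant computation. The multiplication operators by bounded functions form a maximal abelian von Neumann algebra $\ell^\infty(T) \subsetof \B(\ell^2(T))$, which is a V*-algebra by \cref{7} and hence continuum-weakly closed. Since $C_0(T) \subsetof \ell^\infty(T)$, we immediately get $\overline{C_0(T)} \subsetof \ell^\infty(T)$, so the whole task is the reverse inclusion.

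The crux is the claim that the continuum-weak and ultraweak topologies agree on $\ell^\infty(T)$. Given a continuum-weak functional $x \mapsto \int_0^1 \<\eta_t| x\xi_t\>\,dt$ with $(\|\eta_t\|^2)$ and $(\|\xi_t\|^2)$ integrable, I would restrict it to a multiplication operator $M_f$ and interchange the integral with the sum defining $\<\eta_t| M_f\xi_t\> = \sum_{s \in T}\overline{\eta_t(s)}\,f(s)\,\xi_t(s)$. The interchange is justified by \ref{v3.5} together with the Cauchy--Schwarz estimate $\int_0^1\sum_s |\eta_t(s)||\xi_t(s)|\,dt \leq (\int_0^1\|\eta_t\|^2\,dt)^{1/2}(\int_0^1\|\xi_t\|^2\,dt)^{1/2} < \infty$. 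This exhibits the functional on $\ell^\infty(T)$ as $f \mapsto \sum_s f(s)g(s)$, where $g(s) = \int_0^1 \overline{\eta_t(s)}\xi_t(s)\,dt$ lies in $\ell^1(T)$ by the same estimate. Conversely, any $g \in \ell^1(T)$ is realized by the single vector functional $f \mapsto \<\eta| M_f\xi\>$ with $\xi(s) = |g(s)|^{1/2}$ and $\eta(s) = \overline{\mathrm{sgn}(g(s))}|g(s)|^{1/2}$, which is ultraweakly continuous. Hence the restrictions to $\ell^\infty(T)$ of the continuum-weak and of the ultraweak functionals both coincide with the pairing against $\ell^1(T)$, so the two subspace topologies on $\ell^\infty(T)$ are identical.

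With the topologies identified, the lemma reduces to computing the ultraweak closure of $C_0(T)$, which by \ref{v21} equals the double commutant $C_0(T)''$ once nondegeneracy is checked. For each $s \in T$, local compactness of the metric space $T$ supplies a continuous bump function, supported in a small compact ball and equal to $1$ at $s$; it lies in $C_0(T)$ and fixes the basis vector $e_s$, so $C_0(T)$ is nondegenerate, and such bumps also separate the points of $T$. A matrix-coefficient argument then gives $C_0(T)' = \ell^\infty(T)$: if $y$ commutes with every $M_f$, then $(f(s)-f(s'))\<e_s| ye_{s'}\> = 0$ forces $\<e_s| ye_{s'}\> = 0$ for $s \neq s'$, so $y$ is diagonal, that is, multiplication by the bounded function $s \mapsto \<e_s| ye_s\>$. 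Since $\ell^\infty(T)$ is maximal abelian, $C_0(T)'' = \ell^\infty(T)' = \ell^\infty(T)$. As $\overline{C_0(T)}$ sits inside $\ell^\infty(T)$, where the two topologies agree, this continuum-weak closure equals the ultraweak closure $C_0(T)'' = \ell^\infty(T)$.

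I expect the main obstacle to be the topology-agreement step: one must carefully justify the Fubini--Tonelli interchange within the Chang model via \ref{v3.5}, and confirm that no continuum-weak functional on $\ell^\infty(T)$ escapes $\ell^1(T)$, since this is exactly where a discrepancy between continuum-weak and ultraweak convergence could hide when $\ell^2(T)$ is nonseparable. A secondary point needing care is that the auxiliary choices — the bump functions and the point-separation — be constructive, which they are because $T$ is metrizable, so that no appeal to a nonconstructive Urysohn lemma is required.
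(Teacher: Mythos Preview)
Your central claim---that the continuum-weak and ultraweak topologies agree on $\ell^\infty(T)$---is false whenever $T$ is uncountable, and this is precisely the interesting case. Take $T = \II$ with its usual topology and set $\xi_t = \eta_t = e_t \in \ell^2(\II)$. Then $\int_0^1 \|\xi_t\|^2\,dt = 1$, so $f \mapsto \int_0^1 \langle e_t\,|\, M_f e_t\rangle\,dt = \int_0^1 f(t)\,dt$ is a continuum-weakly continuous functional on $\ell^\infty(\II)$ (well defined for every bounded $f$ by $\mathbf{LM}$). This functional is not ultraweakly continuous: any ultraweakly continuous functional on $\ell^\infty(\II)\subsetof\B(\ell^2(\II))$ is a countable sum of vector functionals built from countably supported $\ell^2$ vectors, and hence factors through countably many coordinates, whereas Lebesgue integration does not. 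Concretely, your formula $g(s) = \int_0^1 \overline{\eta_t(s)}\,\xi_t(s)\,dt$ gives $g(s) = \int_0^1 [t=s]\,dt = 0$ for every $s$, so the purported Fubini interchange collapses a nonzero functional to zero.

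The specific error is the appeal to \ref{v3.5}: that result is stated for \emph{probability} measures on sets $T_0,T_1 \preccurlyeq \RR$, whereas your interchange pairs Lebesgue measure on $\II$ with counting measure on $T$, which is not even $\sigma$-finite when $T$ is uncountable. No Fubini--Tonelli theorem covers this, in $\CCC$ or in $\VVV$. The paper's argument avoids this trap entirely. It first places the bounded Borel functions $\Bb(T)$ inside $\overline{C_0(T)}$ via monotone sequential limits (\cref{15}), then uses \cref{39} to write any continuum-weakly continuous functional on $\ell^\infty(T)$ as $f\mapsto\int f\,h\,dm$ for some probability measure $m$ on $T$ and some $h\in L^1(m)$, and finally invokes $\mathbf{LM}$ via \ref{v3.2} to replace an arbitrary projection by a Borel one $m$-almost everywhere. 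The point is not that the extra continuum-weak functionals are absent, but that they cannot separate $\ell^\infty(T)$ from $\Bb(T)$ precisely because every set is measurable.
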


\begin{proof}
Clearly, $\overline{C_0(T)} \subsetof C_0(T)'' = \ell^\infty(T)$. Let $\Bb(T)\subsetof \ell^\infty(T)$ denote the algebra of bounded Borel functions, i. e., of bounded Baire functions; these algebras are equal on any second countable locally compact Hausdorff space; see \cite{Pedersen88} proposition 6.2.9. We have the inclusion $\Bb(T)\subsetof \overline{C_0(T)}$, because V*-algebras are closed under limits of monotone sequences, by \cref{15}. If $\overline{C_0(T)} \neq \ell^\infty(T)$, then $\overline{\Bb(T)} \neq \ell^\infty(T)$, so there is a continuum-weakly continuous function $\varphi \: \ell^\infty(T) \To \CC$ such that $\varphi(\Bb(T)) = 0$, but $\varphi(p) \neq 0$ for some projection $p \in \ell^\infty(T)$. 

By \cref{39}, there is a probability measure $m$ and an integrable function $h\: T \To \CC$ such that $\varphi(f) = \int_{t \in T} f(t) h(t) \, dm$ for all $f \in \ell^\infty(T)$. By $\mathbf{LM}$ (\ref{v3.2}), there is a Borel projection $b \in \Bb(T)$ such that $b(t) = p(t)$ for almost every $t \in T$. It follows that $\varphi(p) = \int p(t) h(t)\, dm = \int b(t) h(t) \, dm = \varphi(b) = 0$, contradicting our choice of $p$. 
\end{proof}

\begin{proposition}[cf. \cref{45}]\label{53}
Let $X$ be a second countable locally compact Hausdorff space. Then $V^*(C_0(X)) \iso \ell^\infty(X)$:
$$
\begin{tikzcd}
C_0(X)\arrow[hook]{dr} \arrow{r}{\gamma_S} & V^*(C_0(X)) 
\arrow[dotted, leftrightarrow]{d}{!}[swap]{\iso}\\
& \ell^\infty(X)
\end{tikzcd}
$$
\end{proposition}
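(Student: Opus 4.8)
The plan is to identify the atomic representation $\gamma_\partial$ of $C_0(X)$ with the canonical multiplication representation on $\ell^2(X)$, and then to combine \cref{51} with \cref{52}. First I would record two preliminary facts: a second countable locally compact Hausdorff space is locally compact Polish, and $C_0(X)$ is a separable commutative C*-algebra with spectrum $X$ by \ref{v17}. Since the pure states of a commutative C*-algebra are exactly its characters, the pure states of $C_0(X)$ are the point evaluations $\delta_x\: f \mapsto f(x)$, one for each $x \in X$.

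Next I would observe that the GNS representation associated to $\delta_x$ is one-dimensional, with $\H_{\delta_x} = \CC$ and $\gamma_{\delta_x}(f) = f(x)$. Hence the atomic representation $\gamma_\partial\: f \mapsto \bigoplus_{x \in X}\gamma_{\delta_x}(f)$ acts on $\bigoplus_{x \in X}\CC = \ell^2(X)$ as the diagonal multiplication operator $f \mapsto (f(x)\: x \in X)$. In other words, $\gamma_\partial$ is precisely the canonical representation of $C_0(X)$ on $\ell^2(X)$, and $\gamma_\partial(f)$ is the image of $f$ under the natural inclusion $C_0(X) \hookrightarrow \ell^\infty(X)$.

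Because $X$ is locally compact Polish, \cref{52} gives $\overline{\gamma_\partial(C_0(X))} = \ell^\infty(X)$. Because $C_0(X)$ is separable, \cref{51} applies and yields a continuum-weakly homeomorphic $*$-isomorphism from $V^*(C_0(X)) = \overline{\gamma_\S(C_0(X))}$ onto $\overline{\gamma_\partial(C_0(X))} = \ell^\infty(X)$, carrying $\gamma_\S(f)$ to $\gamma_\partial(f)$. Since $\gamma_\partial(f)$ is the inclusion of $f$ into $\ell^\infty(X)$, this isomorphism makes the displayed triangle commute, completing the proof.

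The step I expect to be the main obstacle is bookkeeping rather than analysis: one must verify carefully, in the Chang model, that the pure state space of $C_0(X)$ really is $X$ with the one-dimensional GNS data described, so that $\gamma_\partial$ genuinely is the multiplication representation on $\ell^2(X)$ to which \cref{52} applies. Once this identification of characters with points and of $\gamma_\partial$ with the canonical representation is in hand, the conclusion is an immediate consequence of \cref{51} and \cref{52}.
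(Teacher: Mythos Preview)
Your proposal is correct and takes essentially the same approach as the paper: reduce to \cref{51} and \cref{52} after noting that $X$ is Polish and $C_0(X)$ is separable. The paper's proof is a one-line appeal to those two lemmas; you make explicit the bridge between them---that the atomic representation $\gamma_\partial$ of $C_0(X)$ \emph{is} the multiplication representation on $\ell^2(X)$ appearing in \cref{52}---which the paper leaves implicit.
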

\begin{proof}
A second countable locally compact Hausdorff space is Polish, because it is an open subset of its one-point compactification, which is second countable and therefore Polish. We recall that a locally compact Hausdorff space $X$ is second countable iff $C_0(X)$ is separable, and then we apply \cref{51} and \cref{52}, above.
\end{proof}

\section{Separable C*-algebras of type I}

\begin{remark}
Let $A$ be a C*-algebra. The spectrum $\hat A$ is typically defined to be the set of unitary equivalence classes of irreducible representations of $A$. The GNS construction yields a surjective function $\partial \S(A) \To \hat A$, and for all $\mu, \nu \in \partial \S(A)$, the GNS representations $\gamma_\mu$ and $\gamma_\nu$ are unitarily equivalent iff $\mu$ and $\nu$ are unitarily equivalent, so we could also define $\hat A$ to be the set of unitary equivalence classes of pure states of $A$. If $\rho\:A \To \B(\H)$ is a type I representation, in the sense that $\rho(A)$ is nondegenerate on $\H$ and $\rho(A)''$ is a type I factor, then $\rho$ is quasiequivalent to some irreducible representation $\rho_0$, in the sense that there is a unital ultraweakly homeomorphic $*$-isomorphism $\pi\:\rho(A)_0'' \To \rho(A)''$ such that $\rho(a) = \pi(\rho_0(a))$ for all $a \in A$. Since two irreducible representations of $A$ are quasiequivalent iff they are unitarily equivalent, we could also define $\hat A$ to be the set of quasiequivalence classes of type I factor representations of $A$.

For each of the three presentations of $\hat A$ described above, we ask whether it is always possible to choose representatives from the equivalences classes that make up $\hat A$. In general, there is no choice of representative pure states $\hat A \To \partial \S(A)$, because $\hat A \succ \partial \S(A)$ whenever $A$ is a separable C*-algebra not of type I, as we show in \cref{54}. Furthermore, I do not know whether there is always a choice of representative irreducible representations. However, it is always possible to choose representative type I representations. Perhaps the simplest canonical choice of a type I representation associated to a given element of $\hat A$ is the direct sum of all the GNS representations $\gamma_\mu$ for pure states $\mu$ associated to the given element of $\hat A$, i. e., $\gamma_{[\nu]}\: a \mapsto \bigoplus_{\mu \sim\nu} \gamma_\mu(a)$ for any pure state $\nu$ associated to the given element of $\hat A$. We will sometimes equate $\hat A$ with the set $\{\gamma_{[\nu]} \: \nu \in \partial \S (A)\}$; thus, $\gamma_\partial(a) = \bigoplus_{\gamma \in \hat A} \gamma(a)$. Of course, the Hilbert space $\bigoplus_{\mu \sim \nu} \H_\mu$, on which $\gamma_{[\nu]} \in \hat A$ acts, is typically nonseparable, even when $A$ is separable. We can correct this by representing $\gamma_{[\nu]}(A)''$ on the Hilbert space of Hilbert-Schmidt operators in $\gamma_{[\nu]}(A)''$. 
\end{remark}

\begin{proposition}\label{54}
Let $A$ be a separable C*-algebra. Then, $A$ is type I iff $\hat A$ has cardinality of at most the continuum.
\end{proposition}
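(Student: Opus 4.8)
The plan is to prove the two implications separately, using Glimm's theorem in the form recorded in \ref{v19}, and reducing each direction to a cardinality fact: that a standard Borel space injects into $\RR$, and that the quotient $2^\NN/E_0$ does not.

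First, the forward direction. Suppose $A$ is type I. By \ref{v19}, the Borel structure on $\hat A$ generated by the Jacobson topology is standard. A standard Borel space is, by definition, Borel isomorphic to a Borel subset of a Polish space, and its underlying set therefore injects into $\RR$; the witnessing Borel isomorphism is an honest function requiring no choice, so $\hat A \preccurlyeq \RR$ is available in the Chang model. Since being a standard Borel space and being a Borel function are absolute properties, it suffices that the implication ``type I $\Rightarrow$ standard spectrum'' hold in $\VVV$, which it does by Glimm's theorem.

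For the reverse direction I argue the contrapositive: if $A$ is not type I, then $\hat A \not\preccurlyeq \RR$. Glimm's theorem supplies, for a separable C*-algebra that is not type I, a Borel reduction of the equivalence relation $E_0$ of eventual agreement on $2^\NN$ into unitary equivalence of irreducible representations of $A$, i.e. a Borel map $\theta\: 2^\NN \To \partial\S(A)$ with $\theta(x) \sim \theta(y)$ iff $x \mathrel{E_0} y$. The existence of such a reduction is a statement about standard Borel spaces and Borel reductions, hence absolute, so it transfers from $\VVV$ to $\CCC$. Composing with the GNS surjection $\partial\S(A) \To \hat A$ and passing to $E_0$-classes yields a well-defined injection $2^\NN/E_0 \hookrightarrow \hat A$, with no appeal to choice; thus $2^\NN/E_0 \preccurlyeq \hat A$, and it suffices to show $2^\NN/E_0 \not\preccurlyeq \RR$. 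To this end, suppose toward a contradiction that $f\: 2^\NN/E_0 \To \RR$ is injective, and set $\tilde f = f \circ q$, where $q\: 2^\NN \To 2^\NN/E_0$ is the quotient map; then $\tilde f$ is $E_0$-invariant. By $\mathbf{BP}$, every function $2^\NN \To \RR$ has the Baire property. The group $\bigoplus_n \ZZ/2\ZZ$ of finitely supported coordinate flips acts on $2^\NN$ by homeomorphisms with every orbit dense and with orbit equivalence relation $E_0$, so the topological zero-one law (provable from $\mathbf{DC}$ and $\mathbf{BP}$ via the Kuratowski--Ulam theorem) forces $\tilde f$ to be constant, say equal to $c$, on a comeager set $G$. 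Since $G$ is comeager it has cardinality continuum, while each $E_0$-class is countable; hence $G$ meets at least two distinct $E_0$-classes, for otherwise $G$ would be a countable union of countable sets and thus countable. Choosing $x, y \in G$ with $x \not\mathrel{E_0} y$ gives $f(q(x)) = c = f(q(y))$ with $q(x) \neq q(y)$, contradicting injectivity.

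The main obstacle is the reverse direction, and within it the invocation of Glimm's theorem to produce the $E_0$-reduction: one must confirm that the classical construction is captured by an absolute statement about Borel reductions of Borel equivalence relations, so that it survives passage to the Chang model, and one must supply the choiceless cardinality computation $2^\NN/E_0 \not\preccurlyeq \RR$ above. One could instead route the argument through the Vitali relation $E_\QQ$ and the paper's observation that $\TT/\QQ \not\preccurlyeq \RR$, using that all aperiodic hyperfinite Borel equivalence relations are Borel bireducible; the self-contained Baire-category argument given here avoids citing that bireducibility.
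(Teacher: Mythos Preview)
Your argument is correct and follows the same overall architecture as the paper's: type~I $\Rightarrow$ standard spectrum $\Rightarrow$ $\hat A \preccurlyeq \RR$ for the forward direction, and not type~I $\Rightarrow$ $E_0$ reduces to unitary equivalence on $\partial\S(A)$ $\Rightarrow$ $2^\NN/E_0 \preccurlyeq \hat A$ $\Rightarrow$ $\hat A \not\preccurlyeq \RR$ for the converse.

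The main substantive difference is in how the $E_0$-reduction is obtained. The paper first observes (citing Farah) that unitary equivalence on $\partial\S(A)$ is a Borel equivalence relation, then uses \ref{v19} to conclude it is not smooth when $A$ is not type~I, and finally invokes the general Glimm--Effros dichotomy of Harrington--Kechris--Louveau to produce the Borel reduction of $E_0$. You instead extract the $E_0$-reduction directly from the C*-algebraic Glimm construction. Both routes are valid; the paper's is more modular and makes the absoluteness bookkeeping transparent (item 127 on the list covers Borel equivalence relations and Borel reductions), while yours avoids importing the full strength of the HKL dichotomy but requires knowing that the classical Glimm embedding really does yield a reduction of $E_0$, not merely uncountably many classes---this is true (via product states on the embedded UHF subquotient), though it is perhaps more accurately attributed to Glimm--Effros than to Glimm alone. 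Note that your absoluteness claim for the reduction still implicitly relies on unitary equivalence of pure states being an absolute relation, which is what Farah's Borelness result guarantees.

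A second difference is that you supply a self-contained Baire-category proof of $2^\NN/E_0 \not\preccurlyeq \RR$ from $\mathbf{BP}$ and $\mathbf{DC}$, whereas the paper simply asserts $\prod \ZZ_2/\sum \ZZ_2 \succ \RR$. Your argument is standard and correct; the parenthetical appeal to Kuratowski--Ulam is unnecessary, since the topological zero--one law for a countable group acting by homeomorphisms with dense orbits follows directly from the definition of the Baire property.
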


\begin{proof}
It is a standard theorem that a C*-algebra is type I iff the Mackey Borel structure on $\hat A$ is standard; the Mackey Borel structure consists of the sets whose preimages under the GNS construction are Borel subsets of $\partial\S(A)$. A pair of pure states produce unitarily equivalent irreducible representations iff they themselves are unitary equivalent. This a Borel equivalence relation; see \cite{Farah09} proposition 7.

If $A$ is type I, then $\hat A$ is standard, i. e., $\hat A$ equipped with the $\sigma$-algebra of Mackey Borel sets is isomorphic, as a measurable space, to a Polish space $X$ equipped with its $\sigma$-algebra of Borel sets. Clearly, $\hat A \preccurlyeq \RR$.

If $\hat A$ is not type I, then the Mackey Borel structure on $\hat A$ is not countably separated (\ref{v19}), so unitary equivalence on $\partial \S(A)$ is not Borel reducible to the identity relation on $\RR$. By the Glimm-Effros dichotomy \cite{HarringtonKechrisLouveau90}, it follows that $\hat A \succcurlyeq (\prod_{n=0}^\infty \ZZ_2 / \sum_{n=0}^\infty \ZZ_2) \succ 
\RR$.
\end{proof}

\begin{remark}\label{55}
Let $A$ be a separable C*-algebra. Pedersen has shown that the canonical map $\gamma_\S(A)'' \To \gamma_\partial(A)''$ is isometric on the enveloping Borel $*$-algebra $\Bb(A)$ \cite{Pedersen72}. Since $\Bb(A) \subsetof V^*(A)$, as in \cref{46}, the canonical map $V^*(A) \To \overline{\gamma_\partial(A)}$ is also isometric on $\Bb(A)$. We will sometimes abuse notation by identifying $\Bb(A)$ with its image, and writing $\Bb(A) \subsetof \overline{\gamma_\partial(A)}$. 
\end{remark}

\begin{remark}\label{56}
For all $\gamma \in \hat A$, the von Neumann algebra $\gamma(A)''$ is a type I factor, so an operator is in the center of $\bigoplus \gamma(A)''$ iff it is a scalar on each direct summand. In this way we obtain a $*$-isomorphism between the center of $\bigoplus \gamma(A)''$ and the von Neumann algebra of bounded functions $\hat A \To \CC$. Pedersen calls such a function Davies Borel in case it corresponds to an element of $\Bb(A)\subsetof \overline{\gamma_\partial(A)}$ under this isomorphism; see \cite{Pedersen79} section 4.7. Of course, a subset $B \subsetof \hat A$ is said to be Davies Borel iff its indicator function is Davies Borel, i. e., $(1 \:\gamma \in B;\; 0\: \gamma \in \hat A \setminus B) \in \Bb(A)$.
\end{remark}

\begin{theorem}[cf. \cref{45}]\label{57}
Let $A$ be a separable C*-algebra of type I. Then there exists a unique continuum-weakly homeomorphic $*$-isomorphism $V^*(A) \iso \bigoplus_{\gamma \in \hat A} \gamma(A)''$:
$$
\begin{tikzcd}
A \arrow{r}{\gamma_\S} \arrow{rd}[swap]{\gamma_\partial}
& V^*(A) \arrow[dotted, leftrightarrow]{d}{!}[swap]{\iso} \\
& \bigoplus \gamma(A)''
\end{tikzcd}
$$
\end{theorem}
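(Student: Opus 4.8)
The plan is to combine \cref{51} with a direct analysis of the continuum-weak closure of the atomic representation. By \cref{51} there is a continuum-weakly homeomorphic $*$-isomorphism $V^*(A) \iso \overline{\gamma_\partial(A)}$ intertwining $\gamma_\S$ and $\gamma_\partial$, so it suffices to prove that $\overline{\gamma_\partial(A)} = \bigoplus_{\gamma \in \hat A}\gamma(A)''$, where we write $\gamma_\partial(a) = \bigoplus_{\gamma \in \hat A}\gamma(a)$. Since $A$ is type I, each $\gamma(A)''$ is a type I factor, quasiequivalent to an irreducible representation on a separable Hilbert space; realizing each summand on such a space, $M := \bigoplus_{\gamma \in \hat A}\gamma(A)''$ becomes an $\ell^\infty$-direct sum of von Neumann algebras on separable Hilbert spaces indexed by $\hat A$, and $\hat A \preccurlyeq \RR$ by \cref{54}. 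In particular $M$ is a von Neumann algebra, hence a V*-algebra by \cref{7}, and it contains $\gamma_\partial(A)$; being continuum-weakly closed, it therefore contains $\overline{\gamma_\partial(A)}$. This gives the inclusion $\overline{\gamma_\partial(A)} \subsetof M$ for free.

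For the reverse inclusion I would argue by duality: since $\overline{\gamma_\partial(A)}$ is a continuum-weakly closed subspace of $M$, it suffices by \cref{27} to show that every continuum-weakly continuous functional $\varphi$ on $M$ that annihilates $\gamma_\partial(A)$ is zero. By \cref{39}, such a $\varphi$ has the form $\varphi\: (x_\gamma) \mapsto \int_\gamma \varphi_\gamma(x_\gamma)\,dm$ for a probability measure $m$ on $\hat A$ and an integrable family $(\varphi_\gamma \in (\gamma(A)'')_*)$. The kernel of $\varphi$ is continuum-weakly closed and contains $\gamma_\partial(A)$, hence contains the V*-algebra $\overline{\gamma_\partial(A)}$, which in turn contains the enveloping Borel $*$-algebra $\Bb(A)$ by \cref{46} and \cref{55}. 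For each Davies Borel set $B \subsetof \hat A$, the corresponding central projection $p_B \in \Bb(A) \subsetof M$ acts as the identity on the summands indexed by $B$ and as $0$ elsewhere (\cref{56}); since $\overline{\gamma_\partial(A)}$ is closed under multiplication, $p_B\,\gamma_\partial(a)$ lies in it, so $\varphi(p_B\,\gamma_\partial(a)) = \int_{\gamma \in B}\varphi_\gamma(\gamma(a))\,dm = 0$. Because $A$ is type I, the Davies Borel structure coincides with the standard Borel structure on $\hat A$ (\ref{v19.5}), so this identity holds for every Borel set $B$.

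It remains to pass from these integral identities to $\varphi = 0$. For fixed $a \in A$ the integrable function $g_a\: \gamma \mapsto \varphi_\gamma(\gamma(a))$ satisfies $\int_B g_a\,dm = 0$ for all Borel $B$, whence $g_a = 0$ $m$-almost everywhere. Choosing a countable dense subset $\{a_n\}$ of the separable algebra $A$ and intersecting the corresponding conull sets, I obtain a single $m$-conull set on which $\varphi_\gamma(\gamma(a_n)) = 0$ for all $n$; boundedness of $\varphi_\gamma$ then forces $\varphi_\gamma \circ \gamma = 0$ on all of $A$ for $m$-almost every $\gamma$. Since $\gamma(A)$ is ultraweakly dense in $\gamma(A)''$ and $\varphi_\gamma$ is ultraweakly continuous, $\varphi_\gamma = 0$ almost everywhere, so $\varphi = 0$. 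Thus $\overline{\gamma_\partial(A)} = M$, and composing with the isomorphism of \cref{51} yields the desired continuum-weakly homeomorphic $*$-isomorphism; uniqueness follows because $\gamma_\S(A)$ is continuum-weakly dense in $V^*(A)$ and both candidate maps are continuum-weakly continuous. The main obstacle is this reverse inclusion—specifically, checking that \cref{39} genuinely applies once each factor is realized on a separable space, and running the almost-everywhere argument using the standardness of $\hat A$ together with the limited choice available in the Chang model (through $\mathbf{AC_{ae}}$, $\mathbf{LM}$, and closure under countable unions).
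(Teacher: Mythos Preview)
Your argument is correct, and it takes a genuinely different route from the paper's. Both proofs reduce to showing $\overline{\gamma_\partial(A)} = \bigoplus_\gamma \gamma(A)''$, pick a continuum-weakly continuous functional $\varphi$ separating a putative element of the right-hand side from $\overline{\gamma_\partial(A)}$, and disintegrate $\varphi$ via \cref{39}. From there the paper proceeds \emph{constructively}: given the separating element $(x_\gamma^\infty)$, it uses Kaplansky density, $\mathbf{AC_{ae}}$, and $\mathbf{LM}$ to produce a Borel function $f\colon \hat A \to \NN$ and then the explicit operator $\sum_n \gamma_\partial(a_n) p_{f^{-1}(n)} \in \overline{\gamma_\partial(A)}$ which $\varphi$ cannot separate from $(x_\gamma^\infty)$. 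You proceed \emph{dually}: rather than approximating the bad element, you test $\varphi$ on the products $p_B\,\gamma_\partial(a)$ for Davies Borel $B$ (which lie in $\overline{\gamma_\partial(A)}$ since $\Bb(A)$ does), deduce $\int_B \varphi_\gamma(\gamma(a))\,dm = 0$ for all Borel $B$, and conclude by a countable-intersection argument that $\varphi_\gamma = 0$ almost everywhere. Your route is shorter, avoids the explicit Kaplansky approximation, and does not invoke $\mathbf{AC_{ae}}$ a second time; the paper's route, on the other hand, actually exhibits the approximating elements and makes the role of the standard Borel structure on $\hat A$ more transparent. One small misattribution: the separation step does not need \cref{27}; it follows directly from the fact that the continuum-weak topology is the initial topology for $M_\bullet$, so a closed subspace not equal to $M$ is separated from any outside point by some finite-dimensional combination of functionals in $M_\bullet$---exactly as the paper assumes without comment.
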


\begin{proof}
We're proving a stronger version of \cref{51}; it remains to show that $\overline{\gamma_\partial(A)} = \bigoplus \gamma(A)''$.  Suppose that $$(x_\gamma^\infty\: \gamma \in \hat A) \in \left(\bigoplus_\gamma \gamma(A)''\right)\setminus \overline{\gamma_\partial(A)}
$$ is an operator of norm less than $1$. It follows that there is a continuum-weakly continuous functional $\varphi$ on $\bigoplus_\gamma \gamma(A)''$ such that $\varphi(\overline{\gamma_\partial(A)}) = \{0\}$, but $\varphi(x_\gamma^\infty \: \gamma) \neq 0$. Each summand $\gamma(A)''$ is canonically representable on the separable Hilbert space of Hilbert-Schmidt operators in $\gamma(A)''$, and the representation of $\bigoplus_\gamma \gamma(A)''$ on the direct sum of these Hilbert spaces yields the same continuum-weak topology as before, by an application of \cref{13}. Thus, $\varphi$ is still continuum-weakly continuous in this new representation, so \cref{39} shows that there is a family $(\varphi_\gamma\in (\gamma(A)'')_*)$ of ultraweakly continuous functionals, and a probability measure $m$ on $\hat A$ such that
$$\varphi: (x_\gamma) \mapsto \int_\gamma \varphi_\gamma(x_\gamma) \, dm  $$
for all $(x_\gamma) \in \bigoplus_{\gamma} \gamma(A)''.$

Fix a sequence $(a_n\in A_1 \: n \in \NN)$ that is dense in the unit ball $A_1 \subsetof A$. By Kaplansky's density theorem, $(\gamma(a_n) \in A)$ is dense in the unit ball of $\gamma(A)''$ for each $\gamma\in \hat A$.
Symbolically:
\begin{align*}
\forall \gamma \in \hat A \: \exists n \in \NN\:&|\varphi_\gamma(x^\infty_\gamma - \gamma(a_{n}))| \leq \epsilon \\
\exists f\: \hat A \longrightarrow \NN \: \forall^{ae} \gamma \in \hat A\: &|\varphi_\gamma(x^\infty_\gamma - \gamma(a_{f(\gamma)}))| \leq \epsilon \\
\exists f\: \hat A \mathop{\longrightarrow}^{Borel} \NN \: \forall^{ae} \gamma \in \hat A \: &|\varphi_\gamma(x^\infty_\gamma - \gamma(a_{f(\gamma)}))| \leq \epsilon
\end{align*}
The first step is a straightforward application of $\mathbf{AC_{ae}}$, and the second is a straightforward application of $\mathbf{DC}$, and $\mathbf{LM}$ in the guise of \ref{v3.2}; equipped with the Davies Borel structure, $\hat A$ is a standard Borel space, so each subset $f\inv(n) \subsetof \hat A$ differs from some Borel subset on a set of measure zero.

For each natural number $n$, let $p_n \in \ell^\infty(\hat A) \cap \Bb(A) \subsetof \ell^\infty(\hat A) \cap \overline{\gamma_\partial(A)}$ be the projection corresponding to the Davies Borel set $f\inv(n)$. Trivially, the sets $f\inv(n)$ are pairwise disjoint, and their union is $\hat A$, so $\sum_n p_n = 1$ in the ultraweak topology; similarly, the series $\sum_n \gamma_\partial(a_n) p_n$ is ultraweakly convergent because $\|a_n\| \leq 1$ for all $n$. Furthermore,
\begin{align*}
\left|\varphi\left((x^\infty_\gamma \: \gamma) - \sum_n \gamma_\partial(a_n) p_n\right)\right|
& = 
\left|
\varphi(x^\infty_\gamma - \gamma(a_{f(\gamma)})\: \gamma \in \hat A) \right|
 \\ &=
\left|\int_\gamma \varphi_\gamma(x^\infty_\gamma - \gamma(a_{f(\gamma)})) \,dm\right|
\\ & \leq
\int_\gamma |\varphi_\gamma(x^\infty_\gamma - \gamma(a_{f(\gamma)}))| \,dm
\leq \int_\gamma \epsilon \, dm = \epsilon
\end{align*}
Thus, we can ensure that $\varphi(\sum_n \gamma_\partial(a_n) p_n) \neq 0$. Of course, $\sum_n \gamma_\partial(a_n) p_n \in \overline{\gamma_\partial(A)}$ since each term is a product of elements in $\overline{\gamma_\partial(A)}$, and the series converges continuum-weakly, by \cref{15}. This contradicts our choice of $\varphi$.
\end{proof}

\section{Centralizers}

\begin{remark}\label{201}
If $A$ is a C*-algebra, then a (double) centralizer of $A$ is a pair of bounded linear maps $L, R\: A \To A$ such that $L(ab) = L(a) b$, $R(ab) = a R(b)$ and $R(a)b = a L(b)$ for all $a, b \in A$ \cite{Busby68}. Each element $a \in A$ yields a centralizer of $A$ consisting of left and right multiplication by $a$. The centralizers of $A$ form a C*-algebra $M(A) \supseteq A$, and $M(A) = A$ iff $A$ is unital.
\end{remark}

\begin{remark}\label{202}
If $A \subsetof \B(\H)$ is a nondegenerate concrete C*-algebra, then each centralizer of $A$ consists of left and right multiplication by some operator $x \in A''$ such that $xA \subsetof A$ and $Ax \subsetof A$, and $M(A)$ is isomorphic to the C*-algebra of all such operators, called the multipliers of $A$. Thus, in the full set-theoretic universe $\VVV$, these two definitions of $M(A)$ are equivalent, but in the Chang model $\CCC$, an abstract C*-algebra need not have a faithful representation. Note that if $(L, R)$ is a centralizer, then the corresponding multiplier is the ultraweak limit of $(L(e_{\lambda}))$, where $(e_\lambda)$ is any approximate identity of $A$.
\end{remark}

\begin{remark}\label{203}
If $E \subsetof \B(\H)$ is a nondegenerate V*-algebra, then its multiplier algebra $M(A)$ is also a V*-algebra, because multiplication by a fixed operator is continuum-weakly continuous. 
\end{remark}

\begin{lemma}\label{204}
Let $A$ be C*-algebra, and let $E \subsetof \B(\H)$ be a nondegenerate V*-algebra. If $\rho\: A \To M(E)$ is a $*$-homomorphism such that $\rho(A)E$ is continuum-weakly dense in $E$, then there exists a unique unital $*$-homomorphism $M(A) \To M(E)$ that extends $\rho$:
$$\begin{tikzcd}
A \arrow[hook]{r} \arrow{rd}[swap]{\rho}
& M(A) \arrow[dotted]{d}{!}[swap]{\tilde \rho} \\
& M(E)
\end{tikzcd}
$$
\end{lemma}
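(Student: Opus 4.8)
The plan is to construct $\tilde\rho$ by the standard nondegenerate-extension argument, transposed into the continuum-weak setting, using continuum-weak density and the continuity of multiplication (\cref{5}) in place of their norm counterparts. By \cref{202} and \cref{203}, I regard $M(E)$ concretely as the V*-algebra of operators $x \in E''$ with $xE \subsetof E$ and $Ex \subsetof E$, so that $\rho(A) \subsetof M(E) \subsetof E'' \subsetof \B(\H)$. First I would check that $\mathrm{span}\,\rho(A)E\H$ is dense in $\H$: since $E$ is nondegenerate, $E\H$ is dense, and each $e\xi$ lies in the weak closure of $\rho(A)E\H$, because $\rho(A)E$ is continuum-weakly dense in $E$ and every vector functional $x \mapsto \langle \eta | x\xi\rangle$ is continuum-weakly continuous.

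For $m \in M(A)$, I would define $\tilde\rho(m)$ on this dense subspace by $\sum_i \rho(a_i)e_i\xi_i \mapsto \sum_i \rho(ma_i)e_i\xi_i$, using that $m a_i \in A$. The one genuine computation is the well-definedness and boundedness, and it is a matrix-positivity estimate: writing $R = (a_1,\dots,a_n)$ as a row in $M_{1,n}(A)$ and applying the $*$-homomorphism $\rho^{(n)} = \rho \otimes \mathrm{id}_{M_n}$ to the inequality $R^*(m^*m)R \leq \|m\|^2\, R^* R$ in $M_n(A)$, one obtains
\[
\Bigl\| \sum_i \rho(ma_i)e_i\xi_i\Bigr\|^2 \leq \|m\|^2 \Bigl\|\sum_i \rho(a_i)e_i\xi_i\Bigr\|^2 .
\]
Hence the formula well-defines a bounded operator $\tilde\rho(m) \in \B(\H)$ with $\|\tilde\rho(m)\| \leq \|m\|$, characterized by $\tilde\rho(m)\rho(a)e = \rho(ma)e$ for $a \in A$ and $e \in E$.

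It then remains to verify, in each case by agreement on the dense set together with boundedness: that $m \mapsto \tilde\rho(m)$ is linear, multiplicative, $*$-preserving (a direct inner-product computation yields $\tilde\rho(m)^* = \tilde\rho(m^*)$), and unital; that $\tilde\rho$ restricts to $\rho$ on $A \subsetof M(A)$; and that $\tilde\rho(m)$ lands in $M(E)$. For the last point I would first check $\tilde\rho(m) \in E''$ by verifying that it commutes with $E'$ on the dense subspace, using that each $e' \in E'$ commutes with $\rho(a)$ and with $e$ (both in $E''$); then I would obtain $\tilde\rho(m)E \subsetof E$ from $\tilde\rho(m)\,\rho(A)E = \rho(mA)E \subsetof E$ together with the continuum-weak density of $\rho(A)E$ in $E$, the continuum-weak continuity of left multiplication (\cref{5}), and the continuum-weak closedness of $E$; the inclusion $E\tilde\rho(m) \subsetof E$ follows by taking adjoints. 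Uniqueness is immediate: any unital $*$-homomorphism $\sigma$ extending $\rho$ satisfies $\sigma(m)\rho(a) = \sigma(ma) = \rho(ma) = \tilde\rho(m)\rho(a)$, so $\sigma(m)$ and $\tilde\rho(m)$ agree on $\rho(A)E$, hence on $E$ by continuum-weak continuity of multiplication, hence as operators by nondegeneracy.

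I expect the main obstacle to be organizational rather than deep. The single nontrivial inequality is the matrix-positivity estimate displayed above, and the only place where the V*-structure is essential is in passing from the dense algebra $\rho(A)E$ to all of $E$: there I must invoke continuum-weak (not merely norm) density together with the continuity of multiplication to ensure that $\tilde\rho(m)$ remains a multiplier of $E$ and that uniqueness propagates from $\rho(A)E$ to $E$. No choice principles beyond those already available are needed, since $\tilde\rho(m)$ is defined explicitly on a dense subspace and extended by continuity.
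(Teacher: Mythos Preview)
Your proof is correct, and the overall shape---work on the dense subspace spanned by $\rho(A)\H$ (equivalently $\rho(A)E\H$) and extend by boundedness---matches the paper. The execution differs, however. The paper takes an approximate-unit route: fixing an approximate unit $(e_\lambda)$ for $A$, it shows $\rho(e_\lambda)\to 1$ strongly, deduces that any extension must satisfy $\tilde\rho(L,R)=\lim^{SOT}_\lambda\rho(L(e_\lambda))$ (so uniqueness comes first and for free), and then checks that this SOT limit exists and is a unital $*$-homomorphism by pointwise computations on the dense subspace. You instead define $\tilde\rho(m)$ directly by $\rho(a)e\xi\mapsto\rho(ma)e\xi$ and secure well-definedness and the bound $\|\tilde\rho(m)\|\le\|m\|$ via the matrix inequality $R^*m^*mR\le\|m\|^2R^*R$ in $M_n(A)$. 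Your approach avoids nets and approximate units entirely and is more explicit about the verification $\tilde\rho(m)\in M(E)$, which the paper leaves implicit; the paper's approach makes uniqueness immediate from the limit formula without needing the separate density-and-continuity argument you run at the end.
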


\begin{proof}
The proof is a minor variation on the usual argument; it focuses on the subspace $H = \mathrm{span}\{\rho(a) \eta \: a \in A, \, \eta \in \H\}$, which is dense in $\H$ because $\rho(A)E$ is continuum-weakly dense in $E$, and $E\H$ is dense in $\H$. Let $(e_\lambda)$ be an approximate unit for $A$. Clearly, $\rho(e_\lambda) \xi$ converges to $\xi$ for all $\xi \in H$, and therefore, for all $\xi \in \H$; thus, the net $(\rho(e_\lambda))$ converges to $1$ in the strong operator topology. It follows that any $*$-homomorphism $\tilde \rho$ that makes the diagram commute satisfies
$$ \tilde \rho(L, R) = \tilde \rho(L, R) \lim^{SOT}_\lambda \rho(e_\lambda) = \lim^{SOT}_\lambda \tilde \rho(L,R) \rho(e_\lambda) = \lim^{SOT}_\lambda \rho(L(e_\lambda))$$ for all $(L,R) \in M(A)$. It remains to show that such a unital $*$-homomorphism exists. 

The limit $\lim^{SOT}_\lambda \rho(L(e_\lambda))$ exists because $\rho(L(e_\lambda))\xi$ converges in norm for all $\xi \in H$; every bounded net of operators that converges pointwise on a dense subspace of a Hilbert space is convergent in the strong operator topology. Similarly, we verify that $\tilde \rho$ is a unital $*$-homomorphism via computations on the same dense subspace. For example, $\tilde \rho$ respects the adjoint operation because for all $\rho(a)\xi, \rho(b)\eta \in H$,
\begin{align*}
\< \rho(b) \eta | \tilde \rho(R^*, L^*) \rho(a) \xi\>
&= \lim_\lambda \< \rho(b) \eta | \rho(R^*(e_\lambda)) \rho(a) \xi\>
\\ &= \lim_\lambda \< \rho(R(e_\lambda) b) \eta| \rho(a) \xi\>
\\ &= \lim_\lambda \< \rho(e_\lambda L(b)) \eta | \rho(a) \xi\>
\\ &=  \< \rho(L(b)) \eta | \rho(a) \xi\>
\\ &= \lim_\lambda \< \rho(L(e_\lambda)b) \eta |\rho(a) \xi\>
\\ &= \< \tilde \rho(L, R) \rho(b) \eta | \rho(a) \xi\>.
\end{align*}
\end{proof}

\begin{lemma}\label{205}
If, in \cref{204} above, $A$ is a V*-algebra on some Hilbert space $\K$, and $\rho$ is continuum-weakly continuous, then $\tilde \rho$ is continuum-weakly continuous.
\end{lemma}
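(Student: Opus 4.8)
The plan is to reduce the continuum-weak continuity of $\tilde\rho$ to the continuum-weak continuity of $\rho$ that is given, using the vector-state criterion of \cref{13}. First I would observe that we may assume $A$ is nondegenerate on $\K$ (otherwise restrict to $\overline{A\K}$), so that by \cref{202} the multiplier algebra $M(A)$ is realized concretely on $\K$, and by \cref{203} both $M(A)$ and $M(E)$ are V*-algebras. By \cref{13} it then suffices to prove that the pullback along $\tilde\rho$ of each vector functional $y \mapsto \langle \eta | y \xi\rangle$ on $M(E)$, with $\xi,\eta \in \H$, is continuum-weakly continuous on $M(A)$; since vector states are vector functionals, this covers the hypothesis of \cref{13}.

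Next I would perform two routine reductions. Recall the dense subspace $H = \mathrm{span}\{\rho(a)\eta : a \in A,\ \eta \in \H\}$ from the proof of \cref{204}. Because $\tilde\rho$ is contractive, if $\xi_n \to \xi$ and $\eta_n \to \eta$ with $\xi_n,\eta_n \in H$, then $\langle\eta_n|\tilde\rho(\cdot)\xi_n\rangle \to \langle\eta|\tilde\rho(\cdot)\xi\rangle$ in the norm of $M(A)^*$; by \cref{10} it is enough to treat $\xi,\eta \in H$. By sesquilinearity it then suffices to treat $\xi = \rho(a)\zeta_1$ and $\eta = \rho(b)\zeta_2$ for $a,b \in A$ and $\zeta_1,\zeta_2 \in \H$.

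The crux is the identity $\tilde\rho(L,R)\,\rho(a) = \rho(L(a))$. This follows from the formula $\tilde\rho(L,R) = \mathrm{SOT}\text{-}\lim_\lambda \rho(L(e_\lambda))$ established in the proof of \cref{204}, together with $L(e_\lambda)a = L(e_\lambda a) \to L(a)$ in norm (approximate unit, $L$ bounded). Consequently
$$\langle \rho(b)\zeta_2 | \tilde\rho(L,R)\,\rho(a)\zeta_1\rangle = \langle \zeta_2 | \rho\big(b^*(L,R)\,a\big)\,\zeta_1\rangle,$$
where $b^*(L,R)a \in A$ because $A$ is an ideal of $M(A)$. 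Now the map $\Phi\colon M(A) \To A$, $m \mapsto b^* m a$, is continuum-weakly continuous, being left and right multiplication by the fixed operators $b^*$ and $a$ (\cref{5}); $\rho\colon A \To M(E)$ is continuum-weakly continuous by hypothesis; and $y \mapsto \langle\zeta_2| y\zeta_1\rangle$ is a continuum-weakly continuous functional on $M(E)$. Hence the pullback is the composite $m \mapsto \langle\zeta_2|\rho(\Phi(m))\zeta_1\rangle$ of continuum-weakly continuous maps, so it is continuum-weakly continuous. Summing the finitely many such terms and invoking \cref{13} finishes the argument.

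The main obstacle is establishing the identity $\tilde\rho(L,R)\,\rho(a) = \rho(L(a))$ cleanly, and thereby exhibiting the pullback functional as factoring through $\rho$ on $A$; once the functional is written as $\langle\zeta_2|\rho(b^*(L,R)a)\zeta_1\rangle$, its continuum-weak continuity is immediate from \cref{5} and the hypothesis on $\rho$. The two reductions—density of $H$ with \cref{10}, and sesquilinearity—are elementary but needed to bring $\xi,\eta$ into $H$.
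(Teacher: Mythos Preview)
Your proof is correct and follows essentially the same approach as the paper: reduce via \cref{13} to vector functionals, use the density of $H$ together with \cref{10}, and exhibit the resulting functional as factoring through $\rho$ by means of multiplication by elements of $A$. The paper's version is marginally more economical in that it only pushes $\xi$ into $H$ (leaving $\eta\in\H$ arbitrary) and writes the approximating functionals directly as $x\mapsto\langle\eta\mid\rho(xe_\lambda)\xi\rangle$ via the approximate unit, but this is the same idea as your identity $\tilde\rho(m)\rho(a)=\rho(ma)$.
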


\begin{proof}
Since $A$ is a concrete C*-algebra, we may identify $M(A)$ with the algebra of multipliers of $A$, i. e., $M(A) = \{x \in A''\: xA \subsetof A, \, Ax \subsetof A\}$. For all $\eta \in \H$ and $\xi \in H$, the functional $x \mapsto \< \eta | \tilde \rho(x) \xi\>$ is continuum-weakly continuous because it is a uniform limit of the continuum-weakly continuous functionals $x \mapsto \< \eta | \rho(xe_\lambda) \xi\>$. Since $H$ is dense in $\H$, it follows that $x \mapsto \< \eta | \tilde \rho (x) \xi \>$ is continuum-weakly continuous for all $\eta, \xi \in \H$. By \cref{13}, we conclude that $\tilde \rho$ is continuum-weakly continuous.
\end{proof}

\begin{remark}\label{206}
Setting $A = E$, we find that the continuum-weak topology on $M(E)$ is representation independent.
\end{remark}

\begin{remark}\label{207}
Following Woronowicz \cite{Woronowicz79}, we define a C*-morphism from the C*-algebra $A$ to the C*-algebra $B$ to be a $*$-homomorphism $\pi\:A \To M(B)$ such that $\pi(A)B$ is norm dense in $B$. Each such $*$-homomorphism extends to a unique $*$-homomorphism $M(A) \To M(B)$, so we can compose C*-morphisms:
$$
\begin{tikzcd}
A \arrow{r}
& M(B)  \arrow[dotted]{dr}{!}
&  \\
& B \arrow{r} \arrow[hook]{u}
& M(C) 
\end{tikzcd}
$$
Appealing to \cref{205}, we obtain a similar category of V*-algebras and V*-morphisms, by defining a V*-morphism from the V*-algebra $E$ to the V*-algebra $F$ to be a continuum-weakly continuous $*$-homomorphism $\pi\: E \To M(F)$ such that $\pi(E) F$ is continuum-weakly dense in $F$.
\end{remark}

\begin{remark}\label{208}
Let $A$ be a C*-algebra and let $E$ be a V*-algebra. If $\rho\: A \To M(E)$ is a $*$-homomorphism such that $\rho(A)E$ is continuum-weakly dense in $E$, then the universal property of enveloping V*-algebras, \cref{45}, yields a unique continuum-weakly continuous $*$-homomorphism $\pi$ such that $\pi \circ \gamma_\S = \rho$:
$$
\begin{tikzcd}
A \arrow{r}{\gamma_\S} 
\arrow{rd}[swap]{\rho}
& V^*(A) \arrow[dotted]{d}{!}[swap]{\pi} \\
& M(E)
\end{tikzcd}
$$
Since, $\rho(A)E$ is continuum-weakly dense in $E$, $\pi(V^*(A))E$ is also continuum-weakly dense in $E$, so $\pi$ is a V*-morphism from $V^*(A)$ to $E$. Since $\gamma_\S(A)V^*(A)$ is continuum-weakly dense in $V^*(A)$, \cref{204} yields a unique unital $*$-homomorphism $M(A) \To M(V^*(A))$. Together, these observations show that each C*-morphism extends uniquely to a V*-morphism of enveloping V*-algebras:
$$
\begin{tikzcd}
A \arrow{r} \arrow{d}{\gamma_{\S(A)}} & M(B) \arrow[dotted]{d}{!} \arrow[hookleftarrow]{r} &  B \arrow{ld}{\gamma_{\S(B)}}\\
V^*(A) \arrow[dotted]{r}{!} & M(V^*(B)) &
\end{tikzcd}
$$
It is now straightforward, but tedious, to verify that this defines a functor from the category of C*-algebras and C*-morphisms to the category of V*-algebras and V*-morphisms.
\end{remark}

\section{Locally Polish spaces}

\begin{remark}\label{209}
Recall that a Polish space is a separable topological space whose topology comes from a complete metric. Every open subset and every closed subset of a Polish space is Polish in its subspace topology. A topological space is locally Polish in case it is Hausdorff and every point has a neighborhood that is Polish in its subspace topology. Of course, every open subset and every closed subset of a locally Polish space is locally Polish in its subspace topology.
\end{remark}

\begin{lemma}\label{210}
Every compact locally Polish space $X$ is Polish.
\end{lemma}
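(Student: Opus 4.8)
The plan is to show that $X$ is \emph{second countable}, and then to invoke that a second countable compact Hausdorff space is Polish---exactly the implication already used in the proof of \cref{53}. So the whole lemma reduces to establishing a countable basis, and the real work is making sure this is done without an illegitimate appeal to choice.

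First I would produce a finite cover of $X$ by open Polish subspaces. By local Polishness, each point lies in some Polish neighborhood, and such a neighborhood contains a neighborhood of the point that is open in $X$; since an open subspace of a Polish space is Polish (\cref{209}), this smaller neighborhood is itself Polish. Crucially, rather than \emph{choosing} a Polish neighborhood at each point---which would require a form of choice unavailable in the Chang model---I would take $\mathcal U$ to be the collection of \emph{all} open Polish subsets of $X$. Local Polishness guarantees that $\mathcal U$ covers $X$, and compactness then extracts a finite subcover $X = U_1 \cup \cdots \cup U_n$. This reduction from ``all points'' to ``finitely many sets'' is the device that keeps the argument choiceless.

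Next I would assemble a countable basis. Each $U_i$ is Polish, hence separable and metrizable, hence second countable; I pick a countable basis $\mathcal B_i$ for $U_i$. Only \emph{finitely} many such instantiations are needed, so this selection is legitimate in $\mathbf{ZF}$, and producing a countable basis from a countable dense set inside a compatible complete metric is available under $\mathbf{DC}$. Since $U_i$ is open in $X$, every member of $\mathcal B_i$ is open in $X$, and a routine check shows that $\mathcal B = \bigcup_{i=1}^n \mathcal B_i$ is a basis for $X$: given an open $W \ni x$, choose $i$ with $x \in U_i$ and a basic set $B \in \mathcal B_i$ with $x \in B \subseteq W \cap U_i \subseteq W$. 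Thus $X$ is second countable.

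Finally, $X$ is second countable, compact, and Hausdorff, hence normal and therefore metrizable by the Urysohn metrization theorem; being compact and metrizable, it is complete and separable, i.e.\ Polish. I expect the only genuine obstacle here to be set-theoretic rather than topological: one must avoid an uncountable choice of local charts, and this is precisely what compactness removes, leaving only a finite selection of Polish pieces together with the countable constructions---a countable basis in each piece, and the countably many Urysohn functions in the metrization step---that are all sanctioned by $\mathbf{DC}$.
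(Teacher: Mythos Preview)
Your proof is correct and follows essentially the same route as the paper: show $X$ is second countable via a finite cover by open Polish subsets, apply Urysohn's metrization theorem, and conclude Polish from compact plus metrizable. You have supplied more detail than the paper does---in particular the care about avoiding choice when producing the finite cover and the countable bases---but the structure and the key ideas are the same.
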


\begin{proof}
The space $X$ is second countable because it is the union of a finite family of open Polish subsets. By Urysohn's metrization theorem \cite{GoodTree95}, it follows that $X$ is metrizable. The space $X$ is complete for any metric that induces its topology because it is compact, and it is separable because it is second countable.
\end{proof}

\begin{lemma}\label{210.5}
Let $X$ be a locally compact locally Polish space $X$. Every compact subspace of $X$ is Polish and Baire.
\end{lemma}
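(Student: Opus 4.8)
The plan is to treat the two assertions separately, deriving the Polish property almost immediately from \cref{210} and then doing the real work for the Baire property.

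First I would prove that $K$ is Polish. Since $X$ is Hausdorff and $K$ is compact, $K$ is closed in $X$. A closed subspace of a locally Polish space is again locally Polish (as recorded in the discussion preceding \cref{210}), so $K$ is a compact locally Polish space, and \cref{210} gives at once that $K$ is Polish. There is nothing subtle here, and it is worth noting that local Polishness is used only through \cref{210}, not in locating $K$.

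For the Baire property I must exhibit $\mathbf 1_K$ as a member of $\Bb(X)$, i.e.\ as obtained from $C_c(X)$ by monotone sequential limits. The plan is to produce a \emph{decreasing} sequence $(f_n)$ in $C_c(X)$ with $f_n \downarrow \mathbf 1_K$ pointwise. To get compact supports I first enclose $K$ in a well-behaved neighborhood: the family of all open sets with compact closure covers $K$ by local compactness, so a finite subcover yields an open $W \supseteq K$ whose closure $\overline W$ is compact. Crucially, $\overline W$ is a closed, hence locally Polish, subspace of $X$, so by \cref{210} it is Polish; in particular $\overline W$ carries a compatible metric $d$. Using the canonical cover by sets of compact closure, rather than choosing a neighborhood at each point of $K$, keeps this step within $\mathbf{DC}$ and avoids any appeal to the full axiom of choice, which is essential in the Chang model.

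With the metric $d$ on $\overline W$ in hand, I would set $h_n(x) = \max(0,\, 1 - n\, d(x,K))$ for $x \in \overline W$; these are continuous on $\overline W$, equal $1$ on $K$, decrease in $n$, and converge pointwise to $\mathbf 1_K$ on $\overline W$. The one genuine obstacle is continuity after transport to all of $X$: I must extend each $h_n$ by zero across the frontier of $W$ without destroying continuity. This is exactly where compactness of $K$ (via $\overline W$) is used. The set $F = \overline W \setminus W$ is compact and disjoint from the compact set $K$, so $\delta = d(K,F) > 0$; hence for every $n$ with $n > 1/\delta$ the support of $h_n$ within $\overline W$ is a compact subset of the open set $W$. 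Extending by zero then produces $f_n \in C_c(X)$ agreeing with $h_n$ on $W$ and vanishing off its support, and the gluing lemma on the open cover $\{W,\ X \setminus \supp f_n\}$ yields continuity on $X$. The sequence $(f_n)_{n > 1/\delta}$ is decreasing with pointwise limit $\mathbf 1_K$, so $\mathbf 1_K \in \Bb(X)$ and $K$ is Baire; the degenerate cases $K = \varnothing$ and $F = \varnothing$ are immediate. I expect this boundary-extension step to be the main point requiring care, since both the positivity of $d(K,F)$ and the compactness of the supports are needed to guarantee that the zero-extensions remain continuous and compactly supported.
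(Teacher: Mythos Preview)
Your proof is correct and follows essentially the same route as the paper: for the Polish part both invoke \cref{210}, and for the Baire part both enclose $K$ in an open set $W$ with compact (hence Polish, hence metrizable) closure and produce continuous functions on $\overline W$ vanishing on $\overline W\setminus W$ so that the zero-extension lands in $C_c(X)$. The only cosmetic difference is that the paper appeals to perfect normality of the metric space $\overline W$ to obtain a single Urysohn function $h\colon \overline W\to\II$ with $h^{-1}(1)=K$ and $h^{-1}(0)=\overline W\setminus W$, leaving the passage from $h$ to $\mathbf 1_K$ implicit, whereas you build the decreasing sequence $h_n(x)=\max(0,1-n\,d(x,K))$ explicitly; your added care about avoiding choice in selecting $W$ is a pleasant bonus.
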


\begin{proof}
Every compact subspace of $X$ is Polish as an immediate corollary of \cref{210} above. Since $X$ is regular, by the usual argument, it follows that any compact subspace $K$ has an open neighborhood $U$ whose closure is also compact, and therefore, Polish. It follows that there exists a continuous function $h\: \overline U \To \II$ such that $h \inv(1) = K$ and $h\inv (0) = \overline U \setminus U$; this function obviously extends to an element of $C_0(X)$, so $K$ is a Baire subset of $X$. 
\end{proof}

\begin{lemma}\label{211}
Let $X$ be a locally compact locally Polish space. Then every state on $C_0(X)$ is the integral of a continuum of pure states on $C_0(X)$.
\end{lemma}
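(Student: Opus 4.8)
The plan is to identify the pure states of $C_0(X)$ with the points of $X$: every character of a commutative C*-algebra is evaluation at a point of its spectrum, and $\widehat{C_0(X)} = X$, so the pure state space $\partial\S(C_0(X))$ is $X$, and the conclusion becomes the assertion that $\mu$ is a pushforward of Lebesgue measure, i.e. that there is a function $\phi\: \II \To X$ with $\mu(g) = \int_0^1 g(\phi(t))\,dt$ for all $g \in C_0(X)$. Since $X$ need not be second countable, $C_0(X)$ need not be separable and \cref{48} does not apply directly; the strategy is to \emph{localize} $\mu$ onto a separable ideal and then invoke the separable case. First I would extend $\mu$ to a state $\tilde\mu$ on the unitization $C_0(X)+\CC$ and, using $\|\mu\|=1$, choose positive contractions $a_n \in C_0(X)$ with $\mu(a_n) \To 1$.

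Next I would build an open, $\sigma$-compact set carrying the mass of $\mu$. Each $\{|a_n| \geq 1/k\}$ is compact, so $K = \bigcup_{n,k}\{|a_n| \geq 1/k\}$ is $\sigma$-compact; covering each compact piece by a relatively compact open set and taking the union yields an open $U \supseteq K$ that is a countable union of relatively compact open sets, the closures of which are compact subspaces of $X$, hence Polish by \cref{210.5}. Being covered by countably many open subsets of second-countable spaces, $U$ is itself second countable, hence Polish, so $C_0(U)$ is separable; via extension by zero it is the closed ideal $I = \{f \in C_0(X) \: f|_{X \setminus U} = 0\} \subsetof C_0(X)$. Each $a_n$ vanishes off $U$ and so lies in $I$, whence the restriction $\nu = \mu|_I$ has norm $\lim_n \mu(a_n) = 1$ and is a state on $C_0(U)$. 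The localization estimate is that $\mu$ is recovered from $\nu$: for $g \in C_0(X)$, Cauchy--Schwarz in the unitization, together with $(1-a_n)^2 \leq 1-a_n$, gives
$$|\mu(g(1 - a_n))|^2 \leq \mu(gg^*)\,\tilde\mu\big((1-a_n)^2\big) \leq \|g\|^2\,\big(1 - \mu(a_n)\big) \To 0,$$
so that $\mu(g) = \lim_n \mu(ga_n)$ with each $ga_n \in I$.

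Finally I would feed $\nu$ into the separable theory. By \cref{48}, the state $\nu$ on the separable C*-algebra $C_0(U)$ is the integral of a continuum of pure states of $C_0(U)$, and since $\widehat{C_0(U)} = U$ this produces $\phi\: \II \To U$ with $\nu(h) = \int_0^1 h(\phi(t))\,dt$ for all $h \in C_0(U)$ (this is precisely where \ref{v3.4} enters, realizing the representing measure as a pushforward of Lebesgue measure on $\II$). Then for $g \in C_0(X)$ the same contraction bound gives
$$\Big|\int_0^1 g(\phi(t))\big(1 - a_n(\phi(t))\big)\,dt\Big| \leq \|g\|\,\big(1 - \nu(a_n)\big) \To 0,$$
so $\mu(g) = \lim_n \mu(ga_n) = \lim_n \int_0^1 (ga_n)(\phi(t))\,dt = \int_0^1 g(\phi(t))\,dt$. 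As each evaluation $\mathrm{ev}_{\phi(t)}$ with $\phi(t) \in U \subsetof X$ is a pure state of $C_0(X)$, this exhibits $\mu$ as the integral of a continuum of pure states, as required.

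The hard part will be the localization step: constructing the open, second-countable set $U$ on which $\mu$ concentrates and checking carefully that $\mu$ is determined by its restriction to the separable ideal $C_0(U)$. Everything downstream is a routine appeal to the already-established separable case. I should also confirm the point-set facts in the Chang model—most delicately, that a $\sigma$-compact open subset of a locally compact locally Polish space is Polish, which rests on \cref{210.5} and a Lindel\"of / second-countability argument that uses $\mathbf{DC}$ to assemble countably many countable bases—but these are exactly the absolute topological statements that the earlier sections license.
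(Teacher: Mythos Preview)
Your argument is correct and takes a genuinely different route from the paper's. The paper extends $\mu$ to the bounded Baire functions $\Bb(X)$ via Baire codes (using $\mathbf{DC}$), obtains a probability measure on the Baire $\sigma$-ring, shows it is concentrated on a countable union of compact Baire pieces $K_n$, and then represents each restricted measure as a pushforward of scaled Lebesgue measure using \ref{v3.2} and \ref{v3.4}, reassembling at the end. Your approach is more C*-algebraic: you never build the Baire measure, instead localizing $\mu$ to a separable ideal $C_0(U)$ for a $\sigma$-compact open Polish $U$, and then invoking the separable theory of \cref{48} as a black box. The payoff is modularity: you recycle Choquet's theorem rather than redo a measure-theoretic construction, and the Cauchy--Schwarz estimate cleanly replaces the paper's Baire-code extension. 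The paper's route, in turn, is more self-contained and makes the pushforward structure explicit without passing through Choquet. One small remark: your opening identification $\partial\S(C_0(X)) = X$ for the possibly nonseparable $C_0(X)$ is not strictly needed and is the only place where Gelfand duality beyond the separable case might be invoked; your proof in fact only uses that point evaluations are pure states of $C_0(X)$ and that pure states of the \emph{separable} $C_0(U)$ are point evaluations, both of which are unproblematic.
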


\begin{proof}
Let $\mu$ be a state on $C_0(X)$. Recall that the space of bounded real-valued Baire functions on $X$ is the closure of $C_0(X)\sa$ in $\ell^\infty(X)\sa$ under pointwise limits of monotone sequences. By $\mathbf{DC}$, each bounded real-valued Baire function $b\in \Bb(X)\sa$ has a Baire code, which specifies a particular monotone sequence $(b_n)$ of bounded Baire functions of which $b$ is a limit, and particular monotone sequences for each $b_n$, etc., giving a specific construction of $b$ from elements of $C_0(X)\sa$. Given such a Baire code, we may define $\mu(b)$ by taking the corresponding limit at each stage of the construction of $b$; we can verify that such a number exists, and that it is independent of chosen Borel code. Thus, we can extend $\mu$ to the space $\Bb(X)$ of bounded complex-valued Baire functions. We obtain a probability measure $m$ on the $\sigma$-ring of Baire subsets of $X$ such that the integral of each Baire function $b \in \Bb(X)$ is equal to $\mu(b)$.

We can find a sequence of positive functions $(f_m \in C_0(X))$ of norm $1$ such that $\mu(f_m) \To 1$, so the measure $m$ is concentrated on $\bigcup_n K_n$ for some sequence $(K_n)$ of compact Baire subsets. Writing $\bigcup_n K_n = \bigcup_n (K_n \setminus \bigcup_{i <n}(K_i))$, we find that $m$ is the sum of a countable sequence $(m_n)$ of finite measures, each concentrated on the compact Baire subset $K_n$.

For each $n$, each closed subset of $K_n$ is Baire by \cref{210.5}, so the restriction of the $\sigma$-ring of Baire sets to $K_n$ is the $\sigma$-algebra of Borel sets. The subspace $K_n$ is Polish, so the restriction of $m_n$ to $K_n$ is a finite Borel measure, which extends uniquely to a totally defined measure on $K_n$ (\ref{v3.2}). Being Polish, $K_n$ injects into $\RR$, so $m_n$ is a pushforward of Lebesgue measure on $\II$ scaled by $m_n(X)$ (\ref{v3.4}). It follows that the sum $m = \sum_n m_n$ is a pushforward of the probability measure on the disjoint union of $\NN$-many copies of $\II$ obtained by scaling Lebesgue measure on the $n$-th copy by $m_n(X)$. By another application of \ref{v3.4}, we conclude that that $m$ is the pushfoward of Lebesgue measure on $\II$, i. e., $\mu$ is the the integral of a continuum of pure states.
\end{proof}

\begin{lemma}\label{211.5}
Let $X$ be a locally compact locally Polish space. If $C_0(X)$ is canonically represented on $\ell^2(X)$, then $M(\overline{C_0(X)}) = \ell^\infty(X)$. 
\end{lemma}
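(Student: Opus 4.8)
The plan is to compute the multiplier algebra directly as the diagonal operators that multiply $E := \overline{C_0(X)}$ into itself, and to show this exhausts $\ell^\infty(X)$. First I would record the ambient structure. The algebra $C_0(X)$ acts by diagonal multiplication on $\ell^2(X)$, and it is nondegenerate: for each $y \in X$ an Urysohn function $f$ with $f(y)=1$ fixes the basis vector $e_y$. Its commutant is the maximal abelian algebra of all diagonal operators, so $C_0(X)'' = \ell^\infty(X)$. Since the continuum-weak topology is finer than the ultraweak one (\cref{4}), $E$ lies between $C_0(X)$ and its ultraweak closure $C_0(X)'' = \ell^\infty(X)$; hence $E$ is nondegenerate and $E'' = \ell^\infty(X)$. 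By \cref{202}, and because everything in sight is commutative, $M(E) = \{x \in \ell^\infty(X) : xE \subseteq E\}$, so $M(E) \subseteq \ell^\infty(X)$ and only the reverse inclusion needs work.

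Fix $x \in \ell^\infty(X)$. Since multiplication by $x$ is continuum-weakly continuous (\cref{5}) and $E$ is continuum-weakly closed, it suffices to show $xg \in E$ for $g$ in the dense subalgebra $C_0(X)$. Put $h = xg$ and $K_n = \{\,|g| \ge 1/n\,\}$; each $K_n$ is compact by the definition of $C_0$, hence Polish and Baire by \cref{210.5}. The truncations $h_n := h \cdot \mathbf 1_{K_n}$ satisfy $\|h - h_n\|_\infty \le \|x\|_\infty / n$, because $|g| < 1/n$ off $K_n$, so $h_n \to h$ in norm; as $E$ is norm-closed it is enough to prove each $h_n \in E$.

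The heart of the matter is the claim that every bounded function supported on a compact Polish Baire set $K = K_n$ lies in $E$. I would prove this by comparing the continuum-weak topology on the corner $\B(\ell^2(K))$ with that on $\B(\ell^2(X))$. The subspace $\ell^2(K)$ reduces the diagonal algebra, so extension-by-zero $\Psi\colon \ell^\infty(K) \to \ell^\infty(X)$ is a $*$-homomorphism whose pullback of any vector state $\omega_\xi$ is again a vector functional $\omega_{P\xi}$ (with $P$ the projection onto $\ell^2(K)$), hence continuum-weakly continuous; by \cref{13}, $\Psi$ is continuum-weakly continuous. Applying \cref{52} to the locally compact Polish space $K$ gives $\overline{C(K)} = \ell^\infty(K)$ inside $\B(\ell^2(K))$, whence $\Psi(\ell^\infty(K)) \subseteq \overline{\Psi(C(K))}$. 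For $c \in C(K)$, a Tietze extension followed by multiplication by an Urysohn cutoff produces $\tilde c \in C_c(X)$ with $\tilde c|_K = c$, so $\Psi(c) = \mathbf 1_K \tilde c$ is a product of the Baire functions $\mathbf 1_K$ and $\tilde c$, and therefore lies in $\Bb(X)$; the inclusion $\Bb(X) \subseteq E$ is exactly the monotone-sequential-limit argument of \cref{52} via \cref{15}. Consequently $\Psi(\ell^\infty(K)) \subseteq \overline{E} = E$, and in particular $h_n = \Psi(h_n|_K) \in E$.

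Assembling these, $h = xg = \lim_n h_n \in E$, so $x\,C_0(X) \subseteq E$ and therefore $xE \subseteq E$, giving $x \in M(E)$. This yields $\ell^\infty(X) \subseteq M(E)$, hence equality. The main obstacle is the middle step: passing from continuous functions on $K$, where continuity together with Tietze keeps us inside the Baire algebra $\Bb(X) \subseteq E$, up to \emph{all} bounded functions on $K$. This jump is precisely what forces the use of \cref{52} on the compact piece $K$ combined with the continuum-weak continuity of the corner embedding $\Psi$, rather than any naive direct approximation of $xg$ by continuous functions on $X$.
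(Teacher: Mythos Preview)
Your proof is correct and follows essentially the same approach as the paper: reduce to showing that every bounded function supported on a Polish piece of $X$ lies in $E$, then apply \cref{52} to that piece and transfer back via the continuum-weak continuity of the corner embedding. The paper executes this slightly differently---it works with $f \in C_c(X)$, uses the \emph{open} set $\mathrm{supp}(f)$ as the Polish piece, and invokes Stone--Weierstrass to identify the norm closure of $f\cdot C_0(X)$ with $C_0(\mathrm{supp}(f))\oplus 0$ (thereby avoiding your Tietze/Baire detour and the extra norm-approximation step)---but the core idea is identical.
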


\begin{proof}
Clearly, $M(\overline{C_0(X)}) \subsetof C_0(X)'' = \ell^\infty(X)$. To show equality, it is sufficient to show that every function $h \in \ell^\infty(X)$ is a multiplier of $\overline{C_0(X)}$, and since mutliplication by a fixed operator is continuum-weakly continuous, it is sufficient to show that $hf \in \overline{C_0(X)}$ for all $f$ in $C_c(X)\subsetof C_0(X)$, the $*$-subalgebra of compactly supported functions. 

Therefore, suppose that $f\in C_c(X)$, and that $h \in \ell^\infty(X)$. The closed support of $f$ is Polish, by \cref{210.5}, so the set $\mathrm{supp}(f) = \{x \in X \: f(x)\neq0\}$ is also Polish. The Stone-Weierstrass theorem shows that the norm closure of $f\cdot C_0(X)$ is the direct sum of $C_0(\mathrm{supp}(f))$ in its atomic representation, and the zero C*-algebra on $\ell^2(f\inv(0))$. Applying \cref{53}, we find that
$$\ell^\infty(\mathrm{supp}(f)) \oplus 0
= \overline{C_0(\mathrm{supp}(f))} \oplus 0
= \overline{C_0(\mathrm{supp}(f)) \oplus 0}
= \overline{f \cdot C_0(X)} \subsetof \overline{C_0(X)}.$$
Of course, $\mathrm{supp}(fh) \subsetof \mathrm{supp}(f)$, so $fh \in \ell^\infty(\mathrm{supp}(f)) \oplus 0 \subsetof \overline{C_0(X)}$.
\end{proof}

\begin{theorem}\label{212}
Let $X$ be a locally compact locally Polish space. Then, there exists a unique continuum-weakly homeomorphic $*$-isomorphism $M(V^*(C_0(X))) \iso \ell^\infty(X)$ such that
$$
\begin{tikzcd}
C_0(X) \arrow{r}{\gamma_\S} \arrow[hook]{rd} & M(V^*(C_0(X)))
\arrow[dotted, leftrightarrow]{d}{!}[swap]{\iso}\\
& \ell^\infty(X).
\end{tikzcd}
$$
\end{theorem}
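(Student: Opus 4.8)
The plan is to assemble the theorem from \cref{51}, \cref{211}, and \cref{211.5}: first I would identify the atomic representation of $C_0(X)$ with its canonical representation on $\ell^2(X)$, then apply \cref{51} to get $V^*(C_0(X)) \iso \overline{C_0(X)}$, and finally pass to multiplier algebras, where \cref{211.5} supplies $M(\overline{C_0(X)}) \iso \ell^\infty(X)$.

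First I would record that the pure states of the commutative C*-algebra $C_0(X)$ are exactly the point evaluations $\delta_x\: f \mapsto f(x)$, so that $\partial\S(C_0(X))$ is identified with $X$. The GNS representation $\gamma_{\delta_x}$ is the one-dimensional character $f \mapsto f(x)$ on $\H_{\delta_x} = \CC$, with cyclic vector $1$. Taking the direct sum over $x \in X$, the atomic representation $\gamma_\partial$ acts on $\bigoplus_{x \in X}\CC = \ell^2(X)$ by multiplication, so $\gamma_\partial(C_0(X))$ is precisely the canonical representation of $C_0(X)$ on $\ell^2(X)$, and $\overline{\gamma_\partial(C_0(X))} = \overline{C_0(X)}$ in $\B(\ell^2(X))$. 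Now \cref{211} supplies exactly the hypothesis of \cref{51}, namely that every state on $C_0(X)$ is the integral of a continuum of pure states, so \cref{51} yields a continuum-weakly homeomorphic $*$-isomorphism $\theta\: V^*(C_0(X)) \iso \overline{C_0(X)}$ with $\theta \circ \gamma_\S = \gamma_\partial$. In particular $\theta$ carries $\gamma_\S(f)$ to $\gamma_\partial(f)$, which as an element of $\overline{C_0(X)} \subsetof \ell^\infty(X)$ is the function $f$ itself.

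Next I would promote $\theta$ to the multiplier algebras. Both $V^*(C_0(X))$ and $\overline{C_0(X)}$ are nondegenerate (the universal representation is nondegenerate, and $C_0(X)$ acts nondegenerately on $\ell^2(X)$), so applying \cref{204} and \cref{205} to $\theta$ and to $\theta\inv$ produces mutually inverse continuum-weakly continuous unital $*$-homomorphisms $M(V^*(C_0(X))) \to M(\overline{C_0(X)})$, i.e.\ a continuum-weakly homeomorphic $*$-isomorphism extending $\theta$. Finally, \cref{211.5} identifies $M(\overline{C_0(X)})$ with $\ell^\infty(X)$, the element $f \in \overline{C_0(X)}$ mapping to the function $f$; composing gives the required continuum-weakly homeomorphic $*$-isomorphism $M(V^*(C_0(X))) \iso \ell^\infty(X)$, and the computation $\gamma_\S(f) \mapsto f$ shows that the triangle with the inclusion $C_0(X) \hookrightarrow \ell^\infty(X)$ commutes. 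For uniqueness: any two such isomorphisms agree on the continuum-weakly dense subset $\gamma_\S(C_0(X))$ of $V^*(C_0(X))$, hence agree on $V^*(C_0(X))$ by continuum-weak continuity, and then agree on all of $M(V^*(C_0(X)))$ by the multiplier formula of \cref{204} (evaluation against an approximate unit of $V^*(C_0(X))$).

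The main obstacle I anticipate is the first step, the identification $\partial\S(C_0(X)) = X$ together with $\gamma_\partial$ being the canonical representation on $\ell^2(X)$. Because $X$ is only assumed locally Polish, $C_0(X)$ may fail to be separable, so neither the separable Gelfand duality of \ref{v17} nor \cref{53} can be invoked globally; one must argue directly that every character of $C_0(X)$ is a point evaluation. This non-separability is exactly the difficulty that \cref{211} and \cref{211.5} overcome by reducing to compact Polish pieces $K_n \subsetof X$, so once those lemmas are in hand the remaining work here is the formal bookkeeping of the two reductions and the multiplier-algebra passage.
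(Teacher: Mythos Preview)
Your proposal is correct and follows essentially the same route as the paper: invoke \cref{211} to feed \cref{51}, obtain $V^*(C_0(X))\iso\overline{C_0(X)}$ on $\ell^2(X)$, pass to multiplier algebras via \cref{204}/\cref{205}, and finish with \cref{211.5}. The paper orders the steps slightly differently---it first produces the map $V^*(C_0(X))\to M(\overline{C_0(X)})$ from the universal property (\cref{45}) and then extends to multipliers before invoking \cref{51} and \cref{211} to see that it is an isomorphism---but the ingredients and the logic are the same, and your concern about identifying $\gamma_\partial$ with the canonical representation on $\ell^2(X)$ is implicit in the paper's argument as well.
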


\begin{proof}
Let $C_0(X)$ be canonically represented on $\ell^2(X)$.
$$
\begin{tikzcd}
C_0(X) 
\arrow{r}{\gamma_\S}
\arrow[hook]{drr}
&
V^*(C_0(X))
\arrow[hook]{r}
\arrow[dotted]{rd}{!}
&
M(V^*(C_0(X)))
\arrow[dotted]{d}{!}
\\
&
&
M(\overline{C_0(X)})
\end{tikzcd}
$$
The universal property of the enveloping V*-algebra (\cref{45}) yields a unique continuum-weakly continuous $*$-morphism $V^*(C_0(X)) \To M(\overline{C_0(X)})$. Since $C_0(X)\overline{C_0(X)}$ is continuum-weakly dense in $\overline{C_0(X)}$, \cref{204} yields a unique unital continuum-weakly continuous $*$-homomorphism $M(V^*(C_0(X))) \To M(\overline{C_0(X)}) $. The morphism $V^*(C_0(X)) \To M(\overline{C_0(X)})$ is a $*$-isomorphism onto $\overline{C_0(X)}$ by \cref{211} and \cref{51}. It follows that the morphism $M(V^*(C_0(X))) \To M(\overline{C_0(X)})$ is a $*$-isomorphism, since any $*$-isomorphism of concrete C*-algebras extends to a $*$-isomorphism of their multiplier algebras. Finally, $M(\overline{C_0(X)}) = \ell^\infty(X)$ by \cref{211.5} above.
\end{proof}

\begin{corollary}\label{213}
Let $X$ be a set. Then $M(V^*(c_0(X))) \iso \ell^\infty(X)$:
$$
\begin{tikzcd}
c_0(X) \arrow{r}{\gamma_\S} \arrow[hook]{rd} & M(V^*(c_0(X)))
\arrow[dotted, leftrightarrow]{d}{!}[swap]{\iso}\\
& \ell^\infty(X)
\end{tikzcd}
$$
\end{corollary}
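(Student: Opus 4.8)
The plan is to recognize this as the special case of \cref{212} obtained by equipping the set $X$ with its discrete topology, so that essentially all the work has already been done. The three things I would verify are: (a) discrete $X$ is a locally compact Hausdorff space; (b) discrete $X$ is locally Polish in the sense of \cref{209}; and (c) $C_0(X) = c_0(X)$ as concrete C*-algebras on $\ell^2(X)$, with the inclusions into $\ell^\infty(X)$ and the universal representations $\gamma_\S$ agreeing. Given these, \cref{212} applies verbatim and yields the desired continuum-weakly homeomorphic $*$-isomorphism $M(V^*(c_0(X))) = M(V^*(C_0(X))) \iso \ell^\infty(X)$ compatible with the maps in the stated diagram.

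For (a), the Hausdorff property is immediate, and local compactness holds because each singleton $\{x\}$ is a finite, hence compact, open neighborhood of $x$. For (b), I would use these same singleton neighborhoods: a one-point space is trivially a complete separable metric space, hence Polish, so every point of $X$ has a Polish neighborhood. For (c), continuity is automatic for every function on a discrete space, and a subset of a discrete space is compact iff it is finite; thus the definition of $C_0(X)$ from the notation section (continuous functions with $\{|f(x)| \geq \epsilon\}$ compact for every $\epsilon > 0$) collapses to exactly the defining condition of $c_0(X)$ (namely $\{|f(x)| \geq \epsilon\}$ finite for every $\epsilon > 0$). The canonical representations of both on $\ell^2(X)$ therefore coincide, and the inclusion $c_0(X) \hookrightarrow \ell^\infty(X)$ is the inclusion $C_0(X) \hookrightarrow \ell^\infty(X)$.

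There is no substantive obstacle here; the entire content resides in \cref{212}. The single point that warrants explicit comment is that local Polishness, unlike Polishness itself, is insensitive to the cardinality of $X$: for uncountable $X$ the discrete space fails to be separable and hence is not Polish, yet the \emph{local} condition is satisfied by the singleton neighborhoods regardless of $\card(X)$. This is precisely why phrasing \cref{212} in terms of locally Polish rather than Polish spaces is what allows the corollary to hold for an arbitrary set $X$, and it is the only subtlety I would flag in the write-up.
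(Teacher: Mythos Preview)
Your proposal is correct and is exactly the intended argument: the paper states \cref{213} as an immediate corollary of \cref{212}, and the specialization to discrete $X$ via singleton neighborhoods is precisely how one sees that an arbitrary set, regardless of cardinality, is locally compact and locally Polish with $C_0(X) = c_0(X)$.
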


\appendix

\section{Proofs for \cref{cheatsheet}}\label{appendix1}

\begin{proof}[Proof of \ref{v3.2}]
All countable subsets of $X$ are Borel, so we may assume that $m$ is atomless. Apply \ref{v3.1}.
\end{proof}

\begin{proof}[Proof of \ref{v3.4}]
Without loss of generality, we assume that $(T,m)$ is atomless. Clearly $T \not \prec \NN$, so by $\mathbf{PSP}$, $T \approx \II$. Thus, it is sufficient that every atomless probability measure on $\II$ is the pushforward of Lebesgue measure on $\II$, which follows immediately from \ref{v3.1} and \ref{v3.2}.
\end{proof}

\begin{proof}[Proof of \ref{v3.5}]
By \ref{v3.4}, we may assume that $T_0, T_1 = \II$, and $m_0$ and $m_1$ are Lebesgue measure. It follows from \ref{v3.2} that $f$ is measurable, and we can verify via absoluteness that Fubini-Tonelli holds for any Lebesgue-measurable function $\II \times \II \To \CC$. 
\end{proof}

\begin{proof}[Proof of \ref{v3.73}]
Clearly, if there is a net in $Y$ converging to some point $x$, then that point is in the closure of $Y$. On the other hand, if $x$ is in the closure of $Y$, then the following net in $Y$ converges to $x$: the index set consists of all pairs $(U,y)$ where $U$ is a neighborhood of $x$, and $y$ is any point in $Y \cap U$, $(U_0, y_0) \leq (U_1,y_1)$ iff $U_0 \subsetof U_1$, and the value of the net at index $(U,y)$ is $y$.
\end{proof}

\begin{proof}[Proof of \ref{v3.75}]
If $f$ is continuous at $x$, then by definition, the preimage of every neighborhood of $f(x)$ contains a neighborhood of $x$. Thus if $(x_\lambda)$ is a net in $X$ converging to $x \in X$, then eventually $f(x_\lambda)$ is in any given neighborhood of $f(x)$. On the other hand, if $f$ is not continuous at $X$, then by definition there is a neighborhood $V$ of $f(x)$ whose preimage contains no neighborhood of $x$; in this case, $x$ is in the closure of the complement of this preimage so by \ref{v3.73} there is a net $(x_\lambda)$ converging to $x$ such that the net $f(x_\lambda)$ does not converge to $f(x)$ because it is never in $V$.
\end{proof}

\begin{proof}[Proof of \ref{v4}]
This is a known consequence of the fact that every subset of $\II$ is Lebesgue measurable \cite{Garnir74}.
\end{proof}

\begin{proof}[Proof of \ref{v5}]
We combine the open mapping theorem, verified via absoluteness, with \ref{v4}.
\end{proof}

\begin{proof}[Proof of \ref{v15}]
Without the axiom of choice, a nondegenerate representation is not necessarily the direct sum of cyclic representations; thus, the usual proof does not apply. Recall that $\gamma_\S \: A \To \B(\H_\S)$ denotes the universal representation of $A$. Fix another representation $ \rho\: A \To \B(\H)$.

For vectors $\xi, \eta \in \H$, let $\omega_{\xi, \eta}\: \gamma_\S(A) \To \CC$ be given by $\gamma_\S(a) \mapsto \langle \xi | \rho(a) \eta \rangle$, which is easily seen to be well-defined by considering the cyclic representation of $A$ on the subspace $\overline{\rho(A) \eta} \subsetof \H$. Let $\tilde \omega_{\xi, \eta}$ be its unique ultraweak extension to $W^*(A)$, the ultraweak closure of $\gamma_\S(A)$. For each operator $x \in W^*(A)$, the expression $\tilde \omega_{\xi, \eta} (x)$ defines a sesquilinear form on $\H$; it is bounded:
$$ \| \tilde \omega_{\xi, \eta}(x)\| \leq \| \tilde \omega_{\xi, \eta}\| \cdot \|x\| \leq \| \omega_{\xi, \eta}\| \cdot \|x\| \leq \|\xi\| \cdot \|\eta\| \cdot \|x\|$$
The second inequality follows by Kaplansky's density theorem.
We define $\pi(x)$ to be the unique bounded operator such that $\langle \xi | \pi(x) \eta \rangle = \tilde \omega_{\xi, \eta}(x)$. Since $\tilde\omega_{\xi, \eta}(x)$ is linear in $x$, we obtain a bounded linear map $\pi: W^*(A) \To \B(\H)$. Note that $\pi \circ \gamma_\S = \rho$, since $\< \xi | \pi(\gamma_\S(a)) \eta \> =  \tilde \omega_{\xi, \eta}(\gamma_\S(a)) = \omega_{\xi, \eta}(\gamma_\S(a)) = \< \xi | \rho(a) \eta \>$ for all $\xi, \eta \in \H$. The map $\pi$ is ultraweakly continuous since it pulls every vector functional back to a vector functional; this implies uniqueness.

It is easy to verify that $\pi$ is unital and self-adjoint. For all $\xi, \eta \in \H$,
$$ \langle \xi | \pi(1) \eta \rangle = \tilde \omega_{\xi, \eta}(1) = \omega_{\xi, \eta}(1) = \langle \xi | \eta \rangle.$$
Thus, $\pi(1) = 1$. If $x \in W^*(A)$ is self-adjoint, then we fix a net $(b_\lambda \in \gamma_\S(A)\sa)$ that ultraweakly converges to $x$, and compute for all $\xi, \eta \in \H$:
$$\langle \pi(x) \xi | \eta \rangle  = \overline{\< \eta | \pi(x) \xi\>}
= \overline{ \tilde \omega_{\eta, \xi}(x)} = \lim_\lambda \overline{\omega_{\eta, \xi} (b_\lambda)} = \lim_\lambda \langle \xi | \rho(b_\lambda) \eta \rangle = \cdots = \langle\xi | \pi(x) \eta \rangle$$

We establish that $\pi$ is a homomorphism in the usual way: we note that it is a homomorphism on $\gamma_\S(A)$, and then extend this property to $W^*(A)$ first on the first factor, and then on the second. For all $a_0, a_1 \in A$, 
$$ \pi(\gamma_\S(a_0) \gamma_\S(a_1) ) = \pi(\gamma_\S(a_0a_1)) = \rho(a_0a_1) = \rho(a_0) \rho(a_1) = \pi(\gamma_\S(a_0)) \pi(\gamma_\S(a_1))$$
For all $x \in W^*(A)$ and all $b \in \gamma_\S(A)$, there is a net $(b_\lambda \in \gamma_\S(A))$ that converges to $x$ ultraweakly, and therefore, for all $\xi, \eta \in \H$,
$$\<\xi | \pi(x) \pi(b) \eta \> 
= \lim_\lambda \< \xi | \pi(b_\lambda) \pi(b) \eta \> = \lim_\lambda \< \xi | \pi(b_\lambda b) \eta \> = \< \xi | \pi(xb) \eta \>$$
Finally, for all $x_0, x_1 \in W^*(A)$, there is a net $(b_\lambda \in \gamma_\S(A))$ that converges to $x_1$ ultraweakly, and therefore, for all $\xi, \eta \in \H$,
$$ \< \xi | \pi(x_0) \pi(x_1) \eta \> = 
\lim_\lambda \< \xi | \pi(x_0) \pi(b_\lambda) \eta \> = 
\lim_\lambda \< \xi | \pi(x_0 b_\lambda) \eta \> =
\< \xi | \pi(x_0 x_1) \eta \>.$$
\end{proof}

\begin{proof}[Proof of \ref{v28}]
A functional on $M$ is ultraweakly continuous iff it is countably additive, by \cite{Takesaki79} corollary III.3.11. A functional is countably additive iff it is countably additive on every von Neumann subalgebra of $M$ isomorphic to $\ell^\infty(\NN)$, and $\mathbf{BP}$ implies that this is the case \cite{Solovay70} \cite{Pincus74} \cite{Schechter97}*{29.37 and 29.38}. This proof was suggested to me by \textbf{Alexandru Chirvasitu}.
\end{proof}

\begin{bibdiv}
\begin{biblist}

\bib{Akemann69}{article}{
author={C. A. Akemann}
title={The general Stone-Weierstrass problem}
journal={Journal of Functional Analysis}
volume={4}
pages={277-294}
date={1969}
}

\bib{Berkson66}{article}{
title={Some characterizations of C*-algebras}
author={E. Berkson}
journal={Illinois Journal of Mathematics}
volume={10}
number={1}
pages={1-8}
date={1966}
}

\bib{Blackadar06}{article}{
author={B. Blackadar}
title={Operator Algebras}
subtitle={Theory of C*-algebras and von Neumann algebras}
journal={Encyclopedia of Mathematical Sciences}
volume={122}
date={2006}
publisher={Springer}
eprint={http://wolfweb.unr.edu/homepage/bruceb/Cycr.pdf}
}

\bib{BorceuxRosickyVandenBossche89}{article}{
author={F. Borceux}
author={J. Rosicky}
author={G. Van den Bossche}
title={Quantales and C*-algebras}
volume={s2-40}
number={3}
date={1989}
pages={398-404}
journal={Journal of the London Mathematical Society}
}

\bib{Bourbaki04}{book}{
author={N. Bourbaki}
title={Elements of Mathematics}
subtitle={Theory of Sets}
status={etc.}
publisher={Springer}
year={2004}
}

\bib{Busby68}{article}{
author={R. C. Busby}
title={Double centralizers and extensions of C*-algebras}
journal={Transaction of the American Mathematical Society}
volume={132}
pages={79-99}
year={1968}
}

\bib{Chang71}{article}{
author={C. C. Chang}
title={Sets constructible using $L_{\kappa\kappa}$}
journal={Proceedings of Symposia in Pure Mathematics}
volume={13}
date={1971}
page={1-8}
}

\bib{Davies68}{article}{
author={E. B. Davies}
title={On the Borel structure of C*-algebras}
journal={Communications in Mathematical Physics}
volume={8}
date={1968}
pages={147-163}
}

\bib{Farah09}{article}{
author={I. Farah}
title={A dichotomy for the Mackey Borel structure}
eprint={arXiv:0908.1943}
date={2009}
}

\bib{Garnir74}{article}{
author={H. G. Garnir}
title={Solovay's axiom and functional analysis}
conference={
title={Functional Analysis and its Applications}
place={Madras}
date={1973}
}
book={
title={Lecture Notes in Mathematics}
volume={399}
date={1974}
editor={H. G. Garnir}
editor={K. R. Unni}
editor={J. H. Williamson}
pages={189-204}
}
}

\bib{GilesKummer71}{article}{
author={R. Giles}
author={H. Kummer}
title={A noncommutative generalization of topology}
journal={Indiana University Mathematics Journal}
volume={21}
number={1}
pages={91-102}
date={1971}
}

\bib{Glimm61}{article}{
author={J. Glimm}
title={Type I C*-algebras}
journal={Annals of Mathematics}
volume={73}
date={1961}
number={3}
pages={572-612}
}

\bib{GoodTree95}{article}{
author={C. Good}
author={I. J. Tree}
title={Continuing horrors of topology without choice}
journal={Topology and its Applications}
volume={63}
date={1995}
pages={79-90}
}

\bib{Hamkins11}{article}{
author={J. D. Hamkins}
eprint={http://mathoverflow.net/questions/73121/recent-claim-that-inaccessibles-are-inconsistent-with-zf}
date={2011}
}

\bib{HarringtonKechrisLouveau90}{article}{
author={L. A. Harrington}
author={A. S. Kechris}
author={A. Louveau}
title={A Glimm-Effros Dichotomy for Borel Equivalence Relations}
journal={Journal of the American Mathematical Society}
volume={3}
number={4}
date={1990}
pages={903-928}
}

\bib{HeunenLandsmanSpitters09}{article}{
author={C. Heuenen}
author={N. P. Landsman}
author={B. Spitters}
title={A topos for algebraic quantum theory}
journal={Communications in Mathematical Physics}
volume={291}
number={1}
pages={63-110}
date={2009}
eprint={arXiv:0709.4364}
}

\bib{HeunenReyes14A}{article}{
author={C. Heunen}
author={M. Reyes}
title={Active lattices determine AW*-algebras}
journal={Journal of Mathematical Analysis and Applications}
volume={416}
pages={289-313}
eprint={arXiv:1212.5778}
date={2014}
}

\bib{HeunenReyes14B}{article}{
author={C. Heunen}
author={M. L. Reyes}
title={On the discretization of C*-algebras}
eprint={arXiv:1412.1721}
date={2014}
}

\bib{IshamButterfield98}{article}{
author={C. J. Isham}
author={J. Butterfield}
title={A topos perspective on the Kochen-Specker theorem}
subtitle={I. Quantum states as generalised valuations}
journal={International Journal of Theoretical Physics}
volume={37}
pages={2669-2733}
date={1998}
}

\bib{Jech02}{book}{
author={T. Jech}
title={Set Theory}
edition={The Third Millennium Edition}
date={2002}
publisher={Springer-Verlag}
}

\bib{KanoveiLambalgen08}{article}{
author={V. Kanovei}
author={M. van Lambalgen}
title={On the Spector ultrapower of the Solovay model}
date={1995}
eprint={arXiv:math/9502205}
}

\bib{Kaplansky51}{article}{
author={I. Kaplansky}
title={The structure of certain operator algebras}
journal={Transactions of the American Mathematical Society}
volume={70}
date={1951}
pages={219-255}
}

\bib{Kechris73}{article}{
author={A. S. Kechris}
title={Measure and category in effective descriptive set theory}
journal={Annals of Mathematical Logic}
volume={5}
year={1973}
pages={337-384}
}

\bib{Kechris95}{book}{
author={A. S. Kechris}
title={Classical Descriptive Set Theory}
series={Graduate Texts in Mathematics}
volume={156}
date={1995}
}

\bib{Kornell11}{article}{
author={A. Kornell}
title={Quantum functions}
eprint={arXiv:1101.1694}
date={2011}
}

\bib{Kornell12}{article}{
author={A. Kornell}
title={Quantum collections}
eprint={arXiv:1202.2994}
date={2012}
}

\bib{KuperbergWeaver11}{article}{
author={G. Kuperberg}
author={N. Weaver},
title={A von Neumann algebra approach to quantum metrics},
journal = {Memoires of the American Mathematical Society}
date={2011}
volume={215}
number={1010}
publisher={American Mathematical Society}
eprint={arXiv:1005.0353}
}

\bib{Larson}{article}{
author={P. B. Larson}
title={A brief history of determinacy}
book={
title={The Cabal Seminar}
volume={IV}
editor={A. S. Kechris}
editor={B. L\"owe}
editor={J. R. Steel}
publisher={Association for Symbolic Logic}
}
status={to appear}
eprint={http://www.users.miamioh.edu/larsonpb/determinacy_cabal.pdf}
}

\bib{LudvikSpurny}{article}{
author={P. Ludv\'\i k}
author={J. Spurn\'y}
title={Baire Classes of $L_1$-preduals and C*-algebras}
status={to appear in the Illinois Journal of Mathematics}
eprint={http://www.karlin.mff.cuni.cz/~spurny/doc/papers/ludvik-spurny-submission.pdf}
}

\bib{Makkai96}{article}{
author={M. Makkai}
title={Avoiding the axiom of choice in general category theory}
journal={Journal of Pure and Applies Algebra}
volume={108}
year={1996}
pages={109-173}
}

\bib{Mulvey86}{article}{
author={C. J. Mulvey}
title={\&}
journal={Rendiconti del Circolo Matematico di Palermo}
volume={12}
year={1986}
pages={99-14}
}

\bib{Pedersen69}{article}{
author={G. K. Pedersen}
title={On weak and monotone $\sigma$-closures of C*-algebra}
journal={Communications in Mathematical Physics}
volume={11}
year={1969}
pages={221-226}
}

\bib{Pedersen72}{article}{
author={G. K. Pedersen}
title={Applications of weak* semincontinuity in C*-algebra theory}
journal={Duke Mathematical Journal}
volume={39}
date={1972}
pages={431-435}
}

\bib{Pedersen79}{book}{
author= {G. K. Pedersen}
title={$C^*$-algebras and their Automorphism Groups}
publisher={Academic Press}
date={1979}
}

\bib{Pedersen88}{book}{
author={G. K. Pedersen}
title={Analysis Now}
publisher={Springer}
date={1988}
}

\bib{Pestman99}{report}{
author={W. R. Pestman}
date={1999}
title={A constructive probabilistic proof of Choquet's theorem}
number={9913}
organization={Radboud Repository}
}

\bib{Pincus74}{article}{
author={D. Pincus}
title={The strength of the Hahn-Banach theorem}
conference={
title={Victoria Symposium on Nonstandard Analysis}
place={University of Victoria}
date={1972}
}
book={
title={Lecture Notes in Mathematics}
volume={369}
date={1974}
editor={A. Hurd}
editor={P. Loeb}
}
pages={203-248}
}

\bib{Sargsyan09}{thesis}{
author={G. Sargsyan}
title={A Tale of Hybrid Mice}
date={2009}
advisor={J. Steel}
place={UC Berkeley}
type={Ph.D. thesis}
}

\bib{Schechter97}{book}{
author={E. Schechter}
title={Handbook of Analysis and its Foundations}
date={1997}
publisher={Academic Press}
}

\bib{Shelah84}{article}{
author={S. Shelah}
title={Can you take Solovay's inaccessible away?}
journal={Israel Journal of Mathematics}
volume={48}
number={1}
date={1984}
}

\bib{Solovay70}{article}{
author={R. M. Solovay}
title={A model of set-theory in which every set of reals is Lebesgue measurable.}
journal={Annals of Mathematics}
date={1970}
volume={92}
number={1}
pages={1-56}
}

\bib{Takesaki79}{book}{
author={M. Takesaki}
title={Theory of Operator Algebras}
volume={I}
date={1979}
publisher={Springer-Verlag}
}

\bib{Urbanik63}{article}{
author={K. Urbanik}
title={A representation theorem for v*-algebras}
journal={Fundamenta Mathematicae}
volume={52}
number={3}
pages={291-317}
date={1963}
}

\bib{Vath98}{article}{
author={Martin V\"ath}
title={The dual space of $L_\infty$ is $L_1$}
journal={Indagationes Mathematica}
volume={9}
issue={4}
date={1998}
pages={619-625}
}

\bib{Weaver09}{article}{
title={Axiomatizing mathematical conceptualism in third order arithmetic}
author={N. Weaver}
eprint={arXiv:0905.1675}
year={2009}
}

\bib{Weaver11}{article}{
author={N. Weaver},
title={Quantum Relations},
journal={Memoires of the American Mathematical Society}
date={2011}
volume={215}
number={1010}
publisher={American Mathematical Society}
eprint={arXiv:1005.0354}
}

\bib{Woodin14}{article}{
journal={personal correspondence}
author={W. H. Woodin}
date={2014}
}

\bib{Woronowicz79}{article}{
author={S. L. Woronowicz}
title={Pseudospaces, pseudogroups and Pontryagin duality}
conference={
title={Proceedings of the International Conference on Mathematical Physics}
place={Lausanne}
date={1979}
}
book={
title={Lecture Notes in Physics}
volume={116}
date={1979}
}
}

\bib{Yoshizawa51}{article}{
author={H. Yoshizawa}
title={Some remarks on the unitary representations of the free group}
journal={Osaka Mathematical Journal}
volume={3}
number={1}
year={1951}
pages={55-63}
}

 \end{biblist}
\end{bibdiv}

\end{document}